\theoremstyle{plain}
\newtheorem{thm}{Theorem}[section]
\newtheorem{lemma}[thm]{Lemma}
\newtheorem{corollary}[thm]{Corollary}
\newtheorem{proposition}[thm]{Proposition}
\newtheorem{remark}[thm]{Remark}
\numberwithin{equation}{section}
\newcommand{\field}[1]{\mathbb{#1}}
\newcommand{\R}{\field{R}}
\newcommand{\set}[1]{{\left\{ #1\right\}}}               	
\newcommand{\pa}[1]{{\left(#1\right)}}                  	
\newcommand{\sq}[1]{{\left[#1\right]}}                  	
\newcommand{\Mi}{\mathcal M}
\newcommand{\abs}[1]{\left| #1 \right|}
\newcommand{\norm}[1]{\left\| #1 \right\|}              	
\newcommand{\datam}{\{m,n,\nu,L\}}
\def\Xint#1{\mathchoice
  {\XXint\displaystyle\textstyle{#1}}%
  {\XXint\textstyle\scriptstyle{#1}}%
  {\XXint\scriptstyle\scriptscriptstyle{#1}}%
  {\XXint\scriptscriptstyle\scriptscriptstyle{#1}}%
  \!\int}
\def\XXint#1#2#3{{\setbox0=\hbox{$#1{#2#3}{\int}$}
  \vcenter{\hbox{$#2#3$}}\kern-.5\wd0}}
\def\bint{\Xint-}
\def\dashint{\Xint{\raise4pt\hbox to7pt{\hrulefill}}}
\def\dashiint{\bint\kern-0.15cm\bint}
\def\YYint#1#2#3{\setbox0=\hbox{$#1{#2#3}{\iint}$}
   \vcenter{\hbox{$#2#3$}}\kern-0.5\wd0}
\newcommand{\st}{\,:\,}                                       	
\newcommand{\eps}{\varepsilon}
\renewcommand{\Lambda}{\varLambda}
\renewcommand{\Delta}{\varDelta}
\newcommand{\gm}{\gamma}
\newcommand{\sig}{\sigma}
\newcommand{\data}{\mathrm{data}}
\newcommand{\loc}{\mathrm{loc}}
\DeclareMathOperator{\diver}{div}
\renewcommand{\div}{\diver}
\newcommand{\pl}{\partial}
\DeclareMathOperator*{\esssup}{ess\,sup}
\newcommand{\essup}{\operatornamewithlimits{ess\,sup}}
\newcommand{\ukp}{(u-k)_+}
\newcommand{\umkmp}{(u^m-k^m)_+}
\newcommand{\umkmnp}{(u^m-k^m_j)_+}
\newcommand{\umkmnup}{(u^m-k^m_{j+1})_+}
\DeclareMathOperator{\supp}{spt}
\DeclareMathOperator{\dist}{dist}
\newcommand{\vfield}[1]{\mathbf{#1}}
\newcommand{\A}{\vfield{A}}
\newcommand{\tder}[1]{\frac{\partial #1}{\partial t}}
\newcommand{\spacedot}{\, \cdot \,}
\newcommand{\deq}{\equiv}
\newcommand{\origin}{o}
\newcommand{\Mean}[1]{{(#1)}}
\let\TeXchi\chi
\newbox\chibox
\chibox \hbox{\raise\dp0 \box 0 }
\def\chi{\copy\chibox}
\newcommand{\dsty}{\displaystyle}
\newcommand{\txty}{\textstyle}
\title[Self-improving property of singular equations]{Self-improving property of the fast diffusion equation}
\author{Ugo Gianazza}
\address{Dipartimento di Matematica ``F. Casorati''\\
 	Universit\`a di Pavia\\
	via Ferrata 1, 27100 Pavia, Italy}
\email[U.~Gianazza]{gianazza@imati.cnr.it}
\author{Sebastian Schwarzacher}
\address{Katedra matematick\'e anal\'yzy\\ 
Matematicko-fyzik\'aln\'\i\ fakulta Univerzity Karlovy\\ 
Soko\-lovsk\'a 83\\ 
186 75 Praha 8, Czech Republic}	
\email{schwarz@karlin.mff.cuni.cz}
\begin{document}

\begin{abstract}
We show that the gradient of the $m$-power of a solution to a singular parabolic equation of porous medium-type (also known as fast diffusion equation), satisfies a reverse H\"older inequality in suitable intrinsic cylinders. Relying on an intrinsic Calder\'on-Zygmund covering argument, we are able to prove the local higher integrability of such a gradient for $m\in\left(\frac{(n-2)_+}{n+2},1\right)$. Our estimates are satisfied for a general class of growth assumptions on the non linearity. In this way, we extend the theory for $m\geq 1$ (see \cite{GiaSch16} in the list of references) to the singular case. In particular, an intrinsic metric that depends on the solution itself is introduced for the singular regime.
\end{abstract}

\maketitle

\section{Introduction and main result}
The aim of this paper is to study regularity properties of the gradient of \emph{non-negative} solutions to nonlinear, parabolic, partial differential equations, whose prototype is the \emph{singular porous medium equation} 
\begin{align}\label{PMD-eq: model}\tag{PME}
u_{t} - \Delta u^{m} = u_{t} - \div \pa{mu^{m-1} Du} = 0\qquad 0 < m \leq 1.
\end{align}
When $m = 1$, the nonlinear behavior disappears and \eqref{PMD-eq: model} reduces to the  standard heat equation. When $m \neq 1$, the equation is quasi-linear and its \emph{modulus of ellipticity} is $u^{m-1}$.
When $m < 1$, this quantity  blows up as $u \to 0$, the diffusion process dominates over the time evolution (i.e. the diffusion speed is very large) and therefore, the equation is said to be \emph{singular}. The high speed of propagation is the reason for the name of the equation, which is often referred to as \emph{fast diffusion equation}.
Nevertheless, in the following we will prefer and regularly use the term {\em singular porous medium equations}.

Equations of this form arise in applications both from Physics and Mathematics. For example, 
when modelling  the anomalous diffusion of hydrogen plasma across a 
purely poloidal octupole magnetic field, \cite{berryman-77,berryman-78,berryman-holland-80} have shown that
the diffusion equation may be written as the one-dimensional \eqref{PMD-eq: model}
\begin{equation}\label{Eq:anomal}
\frac{\partial}{\partial x}\left[n^{-\frac12}\frac{\partial n}{\partial x}\right]=F(x)\frac{\partial n}{\partial t}\qquad
0\le x\le 1,
\end{equation}
where the geometrical factor $F(x)$ is a strictly positive function determined by the octupole geometry.

A singular porous medium model was proposed by Carleman (\cite{Car}) to study the diffusive limit of kinetic equations.
He considered just two types of particles in a one dimensional setting, moving with speeds $c$ and $-c$. If we denote 
the densities respectively with $u$ and $v$, we can write their simple dynamics as
\begin{equation*}
\left\{
\begin{aligned}
&\partial_t u+c\partial_x u=k(u,v)(v-u)\\
&\partial_t v-c\partial_x v=k(u,v)(u-v)
\end{aligned}
\right.
\end{equation*}
for some interaction kernel $k(u,v)\ge0$. In a typical case, one assumes $\dsty k=(u+v)^\alpha c^2$
with $\alpha\in(0,1]$. If we now write down the equations for $\rho=u+v$ and $j=c(u-v)$ and pass to the limit as 
$c=\frac1\eps\to\infty$, we will obtain to the first order in powers of $\eps$ (see \cite{lions-toscani})
\begin{equation}
\partial_t\rho=\frac12\partial_x\left(\frac1{\rho^\alpha}\partial_x\rho\right),
\end{equation}
which is exactly our case with $m=1-\alpha\in[0,1)$ (what exactly means $m=0$ is beyond the purpose of 
this work). 

As for an example coming not from physical modelling, but from pure Mathematics, the study of the Yamabe Flow in $\mathbb R^n$ ($n>2$) leads to consider
\begin{equation}\label{Eq:yamabe}
\partial_t  v=\Delta v^m,\qquad m=\frac{n-2}{n+2}\in(0,1).
\end{equation}

Obviously, our  list is a very partial one, and there are plenty of other examples: the interested reader can refer, 
for example, to \cite{vazquez2}.

In this paper we  are interested in the order of \emph{integrability} of $|Du^{m}|$; we will deal only with the range 
\begin{equation}\label{Eq:m-range}
m \in \Big(\frac{(n-2)_+}{n+2},1\Big),
\end{equation}
(the case $m>1$ has already been dealt with in \cite{GiaSch16}), and we will study a general class of equations 
which have the same structure as \eqref{PMD-eq: model}. Notice that we are not considering the full interval 
$m\in(0,1)$: the lower bound on $m$ is quite typical, when dealing with regularity issues for the singular porous medium equation (see, for example, the discussion in \cite[Chapter~6, Paragraph~21]{DiBGiaVes11}).

Given a  bounded, open set $E\subset \R^{n}$ with $n\geq2$, and $T >0$, let $E_{T} \deq E \times (0,T)$. 
For $m$ as in \eqref{Eq:m-range}, and $f\in L^\infty_{\loc}(E_T)$, we will consider \emph{nonnegative} solutions to
\begin{equation}\label{PME}
u_{t} -  \div \A(x,t,u,Du^m) = f \qquad \text{weakly in $E_T$.}
\end{equation}
The vector field
 \begin{align*}
\A \colon E_T\times \R \times \R^{n} \to \R^{n}
 \end{align*}
 is only assumed to be measurable, and we suppose there exist constants $0 < \nu \leq L < \infty$ such that
\begin{align}\label{PMS-eq:structure}
\begin{aligned}
&\A(x,t,u,\xi) \cdot \xi \geq \nu\, \abs{\xi}^{2} \\
&\abs{\A(x,t,u,\xi)} \leq L\, \abs{\xi}.
\end{aligned} \qquad\qquad\text{ for a.e. }\ (x,t) \in E_T
\end{align}
By \cite[Chapter~3, \S~5]{DiBGiaVes11}, 
the structure conditions \eqref{PMS-eq:structure} ensure that \eqref{PME} is parabolic. 
Notice that the model problem \eqref{PMD-eq: model} corresponds to the case $\nu = L = 1$ and $f\equiv 0$.
\subsection{Weak solutions and sub(super)-solutions}
A function
\begin{equation}\label{PMS-eq: reg weak sol}
u \in C^0_{\loc} \pa{0,T; L^{m+1}_{\loc} (E)}\quad \text{with} \quad u^{m} \in L^{2}_{\loc} \pa{0,T ; W^{1,2}_{\loc}(E)}
\end{equation}
is a local, weak sub(super)-solution to \eqref{PME}-\eqref{PMS-eq:structure} 
if satisfies the integral identity
\begin{equation}\label{PMS-eq: weak super-sol}
\iint_{E_{T}} - u \phi_{t} + \A(x,t,u,Du^m) \cdot D\phi \,dxdt \le(\ge) \iint_{E_{T}} f \phi\,dxdt
\end{equation}
for all possible choices of nonnegative test functions $\phi \in C^{\infty}_{0}(E_{T})$. 
This guarantees that all the integrals in \eqref{PMS-eq: weak super-sol} 
are convergent.

A {local, weak solution} to \eqref{PME}-\eqref{PMS-eq:structure} is both a sub- and a super-solution, i.e., it  satisfies the integral identity
\begin{align}\label{PMS-eq: weak sol}
\iint_{E_{T}} - u \phi_{t} + \A(x,t,u,Du^m) \cdot D\phi \,dxdt = \iint_{E_{T}} f \phi\,dxdt
\end{align}
for all possible choices of test functions $\phi \in C^{\infty}_o(E_{T})$.

By a standard \emph{mollification} argument, it is possible to use $u^m$ as test function (otherwise, handling the time derivative of $u$ becomes a delicate issue). Let $\zeta \colon \R \to \R$,
 \begin{align*}
\zeta(s) \deq
\left\{
\begin{aligned}
&\ C \exp \pa{\frac{1}{\abs{s}^{2} - 1}} && \abs{s} < 1\\
&\ 0 && \abs{s} \geq 1
\end{aligned}
\right.
 \end{align*}
be the standard mollifier ($C$ is chosen in order to have $\norm{\zeta}_{L^{1}(\R)} = 1$) and define the \emph{family}
 \begin{align*}
\zeta^{\eps}(s) = \frac{1}{\eps} \zeta \pa{\frac{s}{\eps}}, \qquad \eps > 0.
 \end{align*}
Since we need a \emph{time regularization}, given  $\phi \in C^{\infty}_o( E _{T})$, we consider the family of mollifiers $\set{\zeta^{\eps}}$, with
 \begin{align*}
\eps < \dist \pa{\supp \phi,  E_T},
 \end{align*}
and we set
 \begin{align*}
\phi_{\eps}(x,t) = \pa{\varphi \star \zeta^{\eps}}(x,t) =\int_{\R} \phi(x, t-s) \zeta^{\eps}(s)\,ds.
 \end{align*} 

We insert $\phi_{\eps}$ as test function in \eqref{PMS-eq: weak sol}, change variables and apply Fubini's theorem to obtain
\begin{align}\label{PMS-eq: weak sol mollified}
\iint_{ E _{T}} - u_{\eps} \phi_{t} + \A_{\eps}(x,t,u,Du^m) \cdot D\phi\,dxdt = \iint_{E_{T}} f_\eps \phi\,dxdt,
\end{align}
where the subscript in $u_{\eps}$, $f_\eps$, and $\A_{\eps}$ denotes the mollification with respect to time.

We conclude this introductory section with our main result. 
\begin{thm}[Local higher integrability]\label{thm:main}
Let $u \geq 0$ be a local, weak solution to \eqref{PME}-\eqref{PMS-eq:structure} in $E_T$ for $\frac{(n-2)_+}{n+2}<m<1$, and $f\in L^\infty_{\loc}(E_T)$.
Then, there exists $\eps_{\origin} > 0$, depending only on $n$, $m$, $\nu$, and $L$ of 
\eqref{PMS-eq:structure}, such that
 \begin{align*}
 u^{{m}} \in L^{2 + \eps}_{\loc} \pa{0,T ; W^{1,2 + \eps}_{\loc}(E)} \qquad \forall \eps \in (0, \eps_{\origin}].
 \end{align*}
\end{thm}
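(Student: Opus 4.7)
The overall strategy is the classical Gehring-type scheme adapted to the intrinsic geometry of the singular regime. I would (i) derive a Caccioppoli estimate, (ii) combine it with a parabolic Sobolev--Poincar\'e inequality on suitable intrinsic cylinders to produce a reverse H\"older inequality for $|Du^{m}|^{2}$, and (iii) apply an intrinsic Calder\'on--Zygmund covering argument to upgrade this into the higher integrability $u^{m}\in L^{2+\eps}_{\loc}(0,T;W^{1,2+\eps}_{\loc}(E))$.

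For step (i), starting from the time-mollified formulation \eqref{PMS-eq: weak sol mollified} and testing with $(u^{m}-k^{m})_{\pm}\eta^{2}$, with $k\geq 0$ and $\eta$ a standard space--time cutoff, the structure conditions \eqref{PMS-eq:structure} should yield after $\eps\to 0$
\[
\essup_{t}\int_{B}|u-k|_{\pm}^{m+1}\eta^{2}\dx + \iint|D(u^{m}-k^{m})_{\pm}|^{2}\eta^{2}\dxt \leq C\iint\pa{(u^{m}-k^{m})_{\pm}^{2}|D\eta|^{2}+\ldots}\dxt,
\]
where the $f$-contribution appears as a controllable lower-order term since $f\in L^{\infty}_{\loc}$. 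The key conceptual step is then to choose the right cylinders. Since the modulus of ellipticity $u^{m-1}$ \emph{blows up} as $u\to 0$, a pointwise freezing $u^{m-1}\approx\Lambda^{m-1}$ normalizes the equation to the heat equation at the time scale $\tau\sim\Lambda^{1-m}r^{2}$. I would accordingly work on intrinsic cylinders $Q^{\Lambda}_{r}(z_{0}) = B_{r}(x_{0})\times \pa{t_{0}-\Lambda^{1-m}r^{2},\,t_{0}+\Lambda^{1-m}r^{2}}$, with $\Lambda$ calibrated so that the mean of $u^{m+1}$ (or of another solution-dependent quantity such as $|Du^{m}|^{2}$) on $Q^{\Lambda}_{r}$ matches a prescribed power of $\Lambda$. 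Because $1-m>0$, this scaling \emph{reverses} the one used in the degenerate regime in \cite{GiaSch16}.

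In step (ii), on such cylinders the Caccioppoli inequality combines with a parabolic Sobolev--Poincar\'e inequality interpolating the $L^{\infty}_{t}L^{m+1}_{x}$ control with the $L^{2}_{t}W^{1,2}_{x}$ control. The hypothesis $m > (n-2)_{+}/(n+2)$ of \eqref{Eq:m-range} is exactly the threshold at which the associated Sobolev exponents are compatible, producing a reverse H\"older inequality of the form
\[
\viint_{Q^{\Lambda}_{r}}|Du^{m}|^{2}\dxt \leq C\pa{\viint_{Q^{\Lambda}_{2r}}|Du^{m}|^{2q}\dxt}^{1/q} + \text{lower-order terms}
\]
for some $q\in (0,1)$ depending only on $n$, $m$, $\nu$, $L$. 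Step (iii) is then a Vitali/stopping-time construction: starting from a level $\lambda$ well above the mean of $|Du^{m}|^{2}$ on a fixed reference cylinder, for every point of the super-level set $\set{|Du^{m}|^{2}>\lambda}$ I would select a maximal intrinsic cylinder on which the average of $|Du^{m}|^{2}$ equals $\lambda$; the reverse H\"older estimate applied on each such cylinder, combined with a standard distributional argument and integrated in $\lambda$, yields the desired $L^{2+\eps}$ bound for all $\eps\in (0,\eps_{\origin}]$.

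The main obstacle is engineering the intrinsic cylinders required in step (iii). Since the scaling factor $\Lambda^{1-m}$ is defined implicitly through solution-dependent averages, standard metric-space Vitali arguments are not directly available. One must verify that a stopping radius genuinely exists at every point of the super-level set, that the selected family admits a Vitali-type disjointification, and that this disjointification is quantitatively compatible with the reverse H\"older inequality on doubled cylinders. Handling this implicit coupling, particularly at points where $u$ is small so that the singular behaviour is strongest, constitutes the core new work with respect to the degenerate case $m\geq 1$ treated in \cite{GiaSch16}.
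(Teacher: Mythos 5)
Your three-step skeleton (energy estimate, reverse H\"older on intrinsic cylinders, intrinsic Calder\'on--Zygmund covering) is the right one, and you correctly identify that the natural scaling parameter is the solution $u$ itself, with time length $\sim\Lambda^{1-m}r^2$. However, there are two genuine gaps in the way you propose to execute steps (ii) and (iii).

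First, your step (ii) is stated as a single reverse H\"older inequality, as if it could be derived by one application of Caccioppoli plus Sobolev--Poincar\'e. For the porous medium equation this fails because $u-c$ is not a solution, so the mean value cannot be freely subtracted; the cylinders are scaled with $u$, not with $Du^m$, and the Sobolev--Poincar\'e step naturally produces $u^{m+1}$-averages that must be reabsorbed. The paper's resolution is a dichotomy on each stopping cylinder into a \emph{degenerate regime} (the oscillation of $u^m$ is comparable to the size of $u$, condition \eqref{eq:deg1}) and a \emph{non-degenerate regime} (condition \eqref{eq:ndeg}, where $u$ is essentially a constant and the equation behaves like the heat equation). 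Propositions~\ref{Prop:rev-hold-deg} and~\ref{Prop:rev-hol-non-deg} prove the reverse H\"older in each case by entirely different mechanisms: in the degenerate case one uses Corollary~\ref{RHI-for-u} (an $L^\infty$--bound rephrased on intrinsic cylinders) to control $\theta_s$, while in the non-degenerate case one subtracts a well-chosen constant $\lambda$ and runs a freezing argument with Lemmas~\ref{lem:trick} and \ref{lem:time}. Your proposal does not contain this case distinction, and without it the reverse H\"older step does not close.

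Second, in step (iii) you propose a Kinnunen--Lewis style stopping where, for $z$ in the super-level set of $|Du^m|^2$, you pick a maximal \emph{intrinsic} cylinder on which the average of $|Du^m|^2$ equals $\lambda$. This double requirement is overdetermined here: the cylinder geometry is forced by $u^{m+1}$-averages (since those normalize the ellipticity $u^{m-1}$), and cannot simultaneously be calibrated so that the average of $|Du^m|^2$ hits $\lambda$ exactly. This is precisely the obstruction that leads the paper to replace intrinsic cylinders by the weaker \emph{sub-intrinsic} ones in Lemma~\ref{lem:scal}, i.e.\ cylinders for which only the upper bound in \eqref{def:intr0} holds. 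The stopping-time argument (Proposition~\ref{lem:czcover}) is then run inside the nested one-parameter family $Q(s,z)$ built in Lemmas~\ref{lem:scal}--\ref{lem:scal?}, stopping on the $F$-average as in Calder\'on--Zygmund; but then there is a further three-way case analysis (whether a nearby scale is intrinsic and degenerate, intrinsic and non-degenerate, or no nearby scale is intrinsic at all) to decide which of the two reverse H\"older propositions applies, and in the third case one must climb the family until an intrinsic scale is found and transfer information back down using \eqref{scal:6}. You correctly flag in your last paragraph that the Vitali compatibility is the hard part, but the route you sketch (stopping on $|Du^m|^2$-averages over intrinsic cylinders) does not in fact lead to a workable covering for this equation.
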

Theorem~\ref{thm:main} is a straightforward consequence of local quantitative estimates. We provide two different versions, a first one for standard parabolic cylinders $B_r(x_o)\times(t_o-r^2,t_o]$ (see Theorem~\ref{thm-para}), and a second version on the so-called \emph{intrinsic} cylinders (see below for their definition), which inherit the natural scaling properties of the solution (see Theorem~\ref{thm-intr}). 

The assumption that the right-hand side locally lies in $L^\infty_{\loc}(E_T)$ is not the sharpest possible one: this choice has been done in order to simplify some of the computations to follow, but more general hypotheses could be made.
\subsection{Novelty and Significance} 
For \emph{elliptic} equations and systems, Meyers \& Elcrat \cite{Meyers:1975} showed that the gradients of solutions locally belong to a slightly higher Sobolev space than expected a priori. The main tools are a reverse H\"older inequality for $\abs{Du}$ and an application of \emph{Gehring's lemma} (see the original paper \cite{Gehring:1973} and also \cite{Giaquinta:1979,Stredulinsky:1980}).
The method works for equations with $p$-growth, hence degenerate and singular elliptic equations of $p$-Laplacian type are allowed.

Giaquinta \& Struwe \cite{Giaquinta:1982} extended the elliptic, local, higher integrability result to parabolic equations. However, in their work, in order to derive the reverse H\"older inequality, the diffusion term $\A$ has {a linear} growth with respect to $\abs{Du}$, so that degenerate and singular equations are ruled out.

The main obstruction to the extension to the degenerate/singular setting is given by the  \emph{lack of homogeneity} in the energy estimates.  This problem can be overcome by using the so-called {\em intrinsic parabolic geometry}, originally developed by DiBe\-ne\-detto \& Friedman~\cite{DiBFri85,DiBenedetto:1993} in the context of the parabolic $p$-Laplace equation. It is a scaling, which depends on the solution itself. Under a more physical point of view, the diffusion process evolves at a time scale which depends instant by instant on $u$ itself; the homogeneity is recovered, once the time variable is rescaled by a factor that depends on the solution in a suitable way.  

Later on, by rephrasing these ideas in the context of intrinsic Calder{\'o}n-Zygmund coverings, Kinnunen \& Lewis \cite{Kinnunen:2000} were able to show that gradients of solutions to equations with the same structure as the parabolic $p$-Laplacian enjoy a higher integrability property, namely
 \begin{align*}
D u \in L^{p+ \eps}_{\loc}(E_T), \qquad \text{for some }\ \eps > 0.
 \end{align*}
This result holds under very general structural assumptions on the operator, and minimal conditions on the right-hand side. The values of $p$ cover the full degenerate range $p > 2$, but are restricted to the \emph{super-critical} singular range $\frac{2n}{n+2} < p < 2$ (this phenomenon is analogous to the above-mentioned restriction on the value of $m$ in \eqref{Eq:m-range}).  Kinnunen \& Lewis's result was then extended to many different contexts: just to mention a few of them, see \cite{Bogelein:2008,Bogelein:2010,Parviainen:2009,Parviainen:2009a}.

Notwithstanding the large amount of results for solutions to $p$-Laplacian-like equations, the higher integrability result 
for the porous medium equations has remained open for quite some years. 

With respect to the $p$-Laplacian situation, the porous medium equation presents a distinctive difficulty: if $c$ is a 
positive
constant, and $u$ is a local solution to \eqref{PMD-eq: model}, then in general $u-c$ is not a solution: this seemingly small 
obstacle makes it impossible to apply to the porous medium equation the approach known for the $p$-Laplacian. Indeed, 
the latter is based on (scaling invariant) Sobolev-Poincar{\'e} inequalities of Gagliardo-Nirenberg type in space-time, 
which are invariant by a constant (i.e. the mean value). 

When dealing with the higher integrability for the gradient of solutions to the porous medium equation, in \cite{GiaSch16} we overcame this difficulty by splitting the problem into two cases: {\em degenerate} and {\em non-degenerate} regimes. This is a very common approach, going back to DeGiorgi. It has been used to get estimates for solutions of PDEs in many different contexts. If we just limit ourselves to solutions to~\eqref{PMD-eq: model}, without pretending to give a full list of all the relevant results, this method is a key tool to derive Harnack inequalities, as well as to prove H\"older continuity: just as an example, see \cite{DiBGiaVes11}. 
However, as far as we can say, its use in the context of gradient estimates for the porous medium equation had not been tried before. 

A second novelty of \cite{GiaSch16} is in Calder{\'o}n-Zygmund covering, where we used cylinders which are intrinsically scaled with respect to what seems the natural quantity here, namely $u^{m-1}$, where $u$ is the solution. 
In other terms, we considered cylinders of the type $Q_{\theta\rho^2,\rho}$ whose space-time scaling is adapted to the
solution $u$ via the coupling 
\[
\dashiint_{Q_{\theta\rho^2,\rho}} u^{m+1}\,dxdt\approx\theta^{-\frac{m+1}{m-1}}.
\]

Finally, there is a third crucial point in the proof of the higher integrability of \cite{GiaSch16}: In the usual approach, as, for example, in \cite{Kinnunen:2000}, one constructs a covering of super-level sets of the spatial gradient with intrinsic cylinders. However, this is not possible for the cylinders which are intrinsically scaled with respect to $u$. Therefore, we weakened this property to so-called {\em sub-intrinsic} cylinders, i.e. cylinders for which the mean value integral of $u^{m+1}$ is not approximately equal to the right-hand side, but only bounded from above by it.

In \cite{GiaSch16} nonnegative solutions to scalar equations are considered; systems have been 
dealt with in \cite{Bogelein:2018}; as a by-product of the vectorial case, B\"ogelein-Duzaar-Korte \& Scheven were are able to consider also signed solutions in the scalar case. Moreover, in \cite{GiaSch16} weak solutions are defined assuming that $Du^{\frac{m+1}2}\in L^2_{\loc}(E_T)$, and as a result they end up satisfying $Du^{\frac{m+1}2}\in L^{2+\epsilon}_{\loc}(E_T)$; in  \cite{Bogelein:2018} the authors start with solutions satisfying $Du^m\in L^2_{\loc}(E_T)$, and as a result, they prove that $Du^m\in L^{2+\epsilon}(E_T)$.

Finally, coming to \eqref{PME}--\eqref{PMS-eq:structure} for $0<m<1$, even though the regularity for \emph{solutions} has been widely studied (see for example \cite{chen-dibe-92,DiBenedetto:1993,Di-Kw-Ve}), much less is known about the properties of the gradient, and, in particular, the higher integrability of $Du^m$ seems to be completely open. The approach we use in these notes, largely follows the same path developed in \cite{GiaSch16}, and described above, the main difference being perhaps in the notion of solution. Indeed, here we assume that $Du^m\in L^2_{\loc}(E_T)$, and this generates some technical difficulties. On the other hand, we avoided any use of the so-called \emph{expansion of positivity} in dealing with the {\em non-degenerate} regime. Therefore, the method could now be employed to study vector-valued problems and signed solutions, but both for simplicity and to strengthen the emphasis on the technical novelties of this work, once more we have just focused on nonnegative solutions to scalar equations. 

The construction of the intrinsic cylinders originally introduced in \cite{Sch13} to study the parabolic $p$-Laplacian for $p\ge2$, is here extended to the singular setting, and the proof is given with all details; indeed, we think it is of independent interest, and it might find applications in other contexts. 

In order to underline the potential use of the intrinsic cylinders built in Lemma~\ref{lem:scal}, we show that they can be used to provide bounds on the solutions in a new, clean formulation (see Proposition~\ref{Prop:B:4:1}), even though qualitative and quantitative bounds of solutions to the singular porous medium equations are well-known (see, for example, \cite[Appendix B]{DiBGiaVes11}).
\vskip.2truecm
\noindent{\it Acknowledgements.}
S.~Schwarzacher is a member of the Ne\v{c}as Centre for Mathematical Modeling. 
\section{Preliminaries}

\subsection{Notation}
Consider a point $z_{\origin} = (x_{\origin},t_{\origin}) \in \R^{n+1}$
and two parameters $\rho, \tau > 0$. The open ball with radius $\rho$ and center $x_{\origin}$ will be denoted by
 \begin{align*}
B_{\rho}(x_{\origin}) \deq \set{x \in E \st \abs{x-x_{\origin}} < \rho},
 \end{align*}
whereas the time interval with width  $\tau$ and center $t_o$ will be denoted by
\[
\Lambda_{\tau}(t_{\origin})\deq \set{t \in (0,T] \st t_o-\tau<t<t_o+\tau}.
\]
Finally, we define the time-space cylinder by
 \begin{align*}
Q_{\tau,\rho}(z_{\origin})  \deq \pa{t_{\origin} - \tau, t_{\origin} + \tau}\times B_{\rho}(x_{\origin}) .
 \end{align*}
As we prove local estimates, the reference point is never of importance, and we often omit it by writing $B_{\rho}$ and $Q_{\tau,\rho}$.
%

The symbol $\abs{\spacedot}$ stands for the Lebesgue measure, either in $\R^{n}$ or $\R^{n+1}$, and the dimension will be clear from the context.

Moreover, we write $f \sim g$ if there exist constants $c,\,C >0$, which depend only on the data, such that $cf\le g\le Cf$.

\subsection{An auxiliary result}
\begin{proposition}
Given $u, a \geq 0$ and $0 < m < 1$, we have
\begin{align}
\frac{1}{2} \pa{u-a}\pa{u^{m} - a^{m}} &\leq  \int_{a}^{u} \pa{y^{m} - a^{m}}\,dy \leq \pa{u-a} \pa{u^{m} - a^{m}} \qquad \text{if $\ u \geq a$},\label{PMS-eq: bound integral +}\\
\frac{m}{2} \pa{a-u}\pa{a^{m} - u^{m}} &\leq \int_{u}^{a} \pa{a^{m} - y^{m}}\,dy \leq \pa{a-u}\pa{a^{m} -u^{m}}\qquad \text{if $\ u < a$}.\label{PMS-eq: bound integral -}
\end{align}
\end{proposition}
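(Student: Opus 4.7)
The plan is to prove the two lines separately, in each case deducing the upper bound from monotonicity of $y\mapsto y^{m}$ and the lower bound from concavity of the same map (recall $0<m<1$).

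For the first inequality, with $a\le y\le u$, the upper estimate is immediate: since $y^{m}-a^{m}\le u^{m}-a^{m}$ on $[a,u]$, integration over $y$ gives $(u-a)(u^{m}-a^{m})$. For the lower estimate, concavity of $y\mapsto y^{m}$ says that the graph lies above the chord joining $(a,a^{m})$ and $(u,u^{m})$, i.e.
\[
y^{m}-a^{m}\ \ge\ \frac{u^{m}-a^{m}}{u-a}\,(y-a),\qquad y\in[a,u].
\]
Integrating in $y$ produces the factor $(u-a)^{2}/2$, which yields the stated $\tfrac12(u-a)(u^{m}-a^{m})$ bound.

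For the second inequality, with $u\le y\le a$, the upper estimate is again the monotonicity bound $a^{m}-y^{m}\le a^{m}-u^{m}$. For the lower one, the key observation is the tangent inequality at $y=a$: by concavity
\[
y^{m}\ \le\ a^{m}+m\,a^{m-1}(y-a),\qquad \text{i.e.}\qquad a^{m}-y^{m}\ \ge\ m\,a^{m-1}(a-y).
\]
Integrating from $u$ to $a$ gives $\int_{u}^{a}(a^{m}-y^{m})\,dy\ge \tfrac{m}{2}\,a^{m-1}(a-u)^{2}$. It then remains to show that $a^{m-1}(a-u)\ge a^{m}-u^{m}$, which is where the last factor $(a^{m}-u^{m})$ in the claim comes from. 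This inequality is clean after dividing by $a^{m}$: setting $t=u/a\in[0,1]$ it reduces to
\[
1-t\ \ge\ 1-t^{m},\qquad\text{that is}\qquad t^{m}\ \ge\ t,
\]
which is trivially true for $t\in[0,1]$ and $m\in(0,1)$. Combining the two bounds yields the asserted $\tfrac{m}{2}(a-u)(a^{m}-u^{m})$.

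The only step that is not entirely automatic is the choice of the correct tangent/chord comparison in the second inequality: one has to anchor the tangent at $y=a$ (not $y=u$) so that the decreasing nature of $y\mapsto y^{m-1}$ works in the right direction, and then close the argument with the elementary power inequality $t^{m}\ge t$ on $[0,1]$. Everything else is integration of an affine lower bound.
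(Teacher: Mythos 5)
Your proof is correct and follows essentially the same strategy as the paper's: the rectangle (monotonicity) bound for each upper estimate, the chord bound for the lower estimate in the first case, and the tangent-at-$a$ triangle bound followed by the elementary inequality $a^{m-1}(a-u)\ge a^m-u^m$ in the second. You merely spell out the tangent step and the reduction to $t^m\ge t$ on $[0,1]$ more explicitly than the paper does.
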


\begin{proof}
The function $f(y) = y^{m} - a^{m}$ is concave, so the integral in \eqref{PMS-eq: bound integral +} is bounded from below by the area of the triangle, and from above by the area of the rectangle.

The integral in \eqref{PMS-eq: bound integral -} is estimated
from above  using the area of the rectangle once more; using the triangle, the bound form below follows from the inequality
\[
\frac{1}{2}m  \sq{a^{m-1} (a-u)} (a-u) \geq \frac{1}{2}m \pa{a^{m} - u^{m}} \pa{a-u},
\]
since $0< m < 1$ and $u < a$.
\end{proof}

\subsection{Constants and data}
As usual, the letter $c$ is reserved to positive constants, whose value may change from line to line, or even in the same formula.
We say that a generic constant $c$ \emph{depends on the data}, if $c = c(n, m, \nu , L)$,
where $\nu$ and $L$ are the quantities introduced in \eqref{PMS-eq:structure}.

Let $\eta\in L^\infty(E)$ and nonnegative: we let 
 \begin{align*}
(g)_E^\eta=\frac{1}{\norm{\eta}_{L^1(E)}}\int_E g\,\eta\,dx.
 \end{align*}
In the special case of {$\eta\equiv1$}, we write
 \begin{align*}
(g)_E^1=:(g)_E=: \dashint_Eg\,dx.
 \end{align*}
We will frequently use what we will refer to in the following as \emph{the best constant property}. For positive $\eta$ we have, for any $c\in\setR$ and $q\in [1,\infty)$
\begin{align}\label{bcp}
\bigg(\frac{1}{\norm{\eta}_{L^1(E)}}\int_E\abs{g-(g)_E^\eta}^q\eta\,dx\bigg)^\frac1q\leq 2\bigg(\frac{1}{\norm{\eta}_{L^1(E)}}\int_E\abs{g-c}^q\eta\,dx\bigg)^\frac1q.
\end{align}
Moreover, if $0\leq \eta\leq 1$, by \eqref{bcp} one obviously obtains that
\begin{align}\label{meanchange}
\bigg(\frac{1}{\norm{\eta}_{L^1(E)}}\int_E\abs{g-(g)_E^\eta}^q\eta\,dx\bigg)^\frac1q\leq 2\bigg(\frac{1}{\norm{\eta}_{L^1(E)}}\int_E\abs{g-(g)_E}^q dx\bigg)^\frac1q,
\end{align}
and as a consequence, that
\begin{equation}\label{meanchange2}
\begin{aligned}
\abs{(g)_E^\eta-(g)_E}&\leq 
\bigg(\frac{1}{\norm{\eta}_{L^1(E)}}\int_E\abs{g-(g)_E^\eta}^q\eta\,dx\bigg)^\frac1q\\
&\leq 2\bigg(\frac{1}{\norm{\eta}_{L^1(E)}}\int_E\abs{g-(g)_E}^q dx\bigg)^\frac1q. 
\end{aligned}
\end{equation}
We will also use the following estimate that was first proved in \cite[Lemma~A.2]{DieKapSch11}.
\begin{lemma}\label{Lm:2.2}
For $q\in (\frac12,\infty)$ we have 
\begin{align}
\label{trick}
\dashint_E\abs{g^q-\Mean{g}_E^q}^2 dx
\leq c_o\dashint_E\abs{g^q-(g^q)_E}^2 dx
\leq c_1\dashint_E\abs{g^q-(g)_E^q}^2 dx
\end{align}
{where $c_o$ and $c_1$ are constants that depend only on the data.}
The same estimate holds for $q\in (0,\frac12]$ in case
\[
\sup_E \abs{g}\leq K\Mean{g}_E
\]
for some $K>0$.
\end{lemma}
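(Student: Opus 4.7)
The second inequality in \eqref{trick} is immediate from the best constant property \eqref{bcp} applied with $\eta\equiv 1$ and exponent $2$: since $(g^q)_E$ minimizes $c\mapsto \dashint_E |g^q - c|^2\,dx$ over constants, the choice $c=(g)_E^q$ gives
\[
\dashint_E |g^q - (g^q)_E|^2\,dx \;\leq\; \dashint_E |g^q - (g)_E^q|^2\,dx,
\]
so one may take $c_1 = c_o$. Only the first inequality requires genuine work, and since $\Mean{g}_E^q = (g)_E^q$, this amounts to showing that $(g^q)_E$ and $(g)_E^q$ are comparable means of $g^q$ in the $L^2$-variance sense.

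The plan is to use the orthogonality identity
\[
\dashint_E |g^q - (g)_E^q|^2\,dx \;=\; \dashint_E |g^q - (g^q)_E|^2\,dx + \bigl|(g^q)_E - (g)_E^q\bigr|^2
\]
to reduce the task to bounding $\bigl|(g^q)_E - \bar g^q\bigr|^2$ by a constant multiple of $\dashint_E |g^q - (g^q)_E|^2\,dx$, where $\bar g := (g)_E$. Using $\dashint_E (g - \bar g)\,dx = 0$, I would write
\[
(g^q)_E - \bar g^q \;=\; \dashint_E \bigl[g^q - \bar g^q - q\bar g^{q-1}(g - \bar g)\bigr]\,dx,
\]
so that the Taylor remainder yields the pointwise bound $|g^q - \bar g^q - q\bar g^{q-1}(g-\bar g)| \leq C_q (g+\bar g)^{q-2}(g-\bar g)^2$. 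Coupling this with the pointwise two-sided comparison
\[
|a^q - b^q|^2 \;\sim_q\; (a+b)^{2q-2}\,(a-b)^2, \qquad a,b \geq 0,
\]
one links both $|(g^q)_E - \bar g^q|$ and $\dashint_E |g^q - \bar g^q|^2\,dx$ to the common weighted quantity $\dashint_E (g+\bar g)^{2q-2}(g-\bar g)^2\,dx$. A Cauchy--Schwarz step then collapses these, and any self-referential occurrence of $|(g^q)_E - \bar g^q|^2$ on the right is absorbed into the left via the orthogonality identity above.

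The restriction $q \in (\tfrac12,\infty)$ is exactly what makes the constants in the pointwise estimates depend only on $q$: the exponent $2q-2 > -1$ keeps the weighted averages meaningful on $\{g \approx 0\}$. For $q \in (0,\tfrac12]$ the weight $(g+\bar g)^{2q-2}$ may degenerate where $g$ is small, and the hypothesis $\sup_E |g| \leq K\bar g$ forces $(g+\bar g)^{2q-2} \sim_K \bar g^{2q-2}$ uniformly on $E$, replacing the degenerate weight by a pure constant and recovering the same scheme with a $K$-dependent constant. The principal obstacle I anticipate is precisely the bookkeeping in the absorption step: the Cauchy--Schwarz estimate naively produces a copy of $\bigl|(g^q)_E - \bar g^q\bigr|^2$ on both sides, and one has to track the constants carefully to ensure the coefficient on the right stays strictly below $1$. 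The extremal example of $g$ concentrated on a small set shows this is sharp exactly at the threshold $q = \tfrac12$, which is both the reason for the stated dichotomy and the reason no elementary triangle-inequality argument can succeed.
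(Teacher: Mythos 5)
Your reduction is sound as far as it goes: the second inequality is indeed just the best-constant property with $c_1=c_o$, and the orthogonality identity correctly reduces the first inequality to showing that the Jensen gap $\abs{(g^q)_E-(g)_E^q}$ is dominated by the $L^2$-deviation of $g^q$. The gap is in the pointwise Taylor remainder bound you then invoke,
\[
\abs{g^q-\bar g^{\,q}-q\bar g^{\,q-1}(g-\bar g)}\;\leq\; C_q\,(g+\bar g)^{q-2}(g-\bar g)^2,\qquad \bar g:=(g)_E,
\]
which is \emph{false} for $q<1$, i.e.\ in precisely the regime the paper needs. Fix $\bar g>0$ and let $g=M\to\infty$: the left side is the Bregman divergence of the concave function $y\mapsto y^q$, which grows like $q\bar g^{\,q-1}M$ (linearly), while the right side behaves like $M^q$; the ratio diverges like $\bar g^{\,q-1}M^{1-q}$. (For $q\geq 1$ the estimate does hold, because the convex Bregman divergence grows like $M^q$ and matches; so your plan would actually handle $q\geq 1$, but not $(\tfrac12,1)$.) Your diagnosis of the threshold $q=\tfrac12$ is also off: the two-sided comparison $\abs{a^q-b^q}\sim_q(a+b)^{q-1}\abs{a-b}$ holds for every $q>0$, and there is no integrability problem for the weighted averages near $\{g\approx 0\}$; what really needs $q>\tfrac12$ is positivity of the exponent $2q-1$.

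That exponent is exactly what the paper's argument turns on, and it takes a different path that avoids any Taylor expansion. It first factors
\[
\dashint_E\abs{g^q-(g)_E^q}^2\,dx\ \sim\ \dashint_E\bigl(g^{2q-1}-(g)_E^{2q-1}\bigr)\bigl(g-(g)_E\bigr)\,dx,
\]
using the two-sided pointwise comparison for the exponents $q$ and $2q-1$ (hence $q>\tfrac12$), and then uses $\dashint_E(g-(g)_E)\,dx=0$ to replace the constant $(g)_E^{2q-1}$ in the first factor by $(g^q)_E^{(2q-1)/q}$. One more application of the pointwise comparison turns the integrand into $(g+e)^{2q-2}\abs{g-e}\,\abs{g-(g)_E}$ with $e=(g^q)_E^{1/q}$, which is then compared to $\dashint_E(g+e)^{2q-2}(g-e)^2\,dx\sim\dashint_E\abs{g^q-(g^q)_E}^2\,dx$ by splitting into $\abs{g-(g)_E}\lesssim\abs{g-e}$ and its opposite. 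To salvage your plan you would need to replace the pointwise Taylor step by an integral-level argument along these lines, since the Jensen gap simply is not controlled pointwise by any second-order remainder when $q<1$.
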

\begin{proof} By the Fundamental Theorem of Calculus, and the orthogonality of the mean value to constants we have
\begin{align*}
\dashint_E\abs{g^q-\Mean{g}_E^q}^2 dx
&\sim \dashint_E(g^{2q-1}-\Mean{g}_E^{2q-1})\cdot(g-\Mean{g}_E) dx\\
&= \dashint_E(g^{2q-1}-\Mean{g^q}_E^\frac{2q-1}q)\cdot(g-\Mean{g}_E) dx.
\end{align*}
In case that $q>\frac12$, we find that 
\begin{align*}
\abs{g^{2q-1}-\Mean{g^q}_E^\frac{2q-1}q}\sim (g+\Mean{g^q}^\frac1q)^{2q-2}\abs{g-\Mean{g^q}^\frac1q},
\end{align*}
and conclude by dividing in the cases $\abs{u-\Mean{g}_E}\leq c \abs{g-(g^q)_E^\frac1q}$ and its opposite.

In case that $q\in (0,\frac12)$, we find 
\begin{align*}
\dashint_E\abs{g^q-\Mean{g}_E^q}^2 dx
&\sim \dashint_E(g+\Mean{g}_E)^{2q-2}\abs{g-\Mean{g}_E}^2 dx
\\
&\leq \Mean{g}_E^{2q-2}\dashint_E\abs{g-\Mean{g}_E}^2 dx
\\
&\leq \Mean{g}_E^{2q-2}\dashint_E\abs{g-\Mean{g^q}_E^\frac1q}^2 dx
\\
&\leq c_K\dashint_E(g+\Mean{g}_E)^{2q-2}\abs{g-\Mean{g^q}_E^\frac1q}^2 dx
\\
&\leq c_K\dashint_E(g+\Mean{g^q}_E^\frac1q)^{2q-2}\abs{g-\Mean{g^q}_E^q}^2 dx
\sim \dashint_E\abs{g^q-\Mean{g^q}_E}^2 dx
\end{align*}

\end{proof}

The restriction on $q>\frac12$ in Lemma~is rather strong.
However, surprisingly enough, a different argument allows 
to circumvent the problems of small $m$. As the following lemma shows, for any convex quantity a ``mean change" 
is possible.
\begin{lemma}\label{lem:trick}
Let $\eta\in L^\infty(E)$ and nonnegative. For $m\in (0,1)$, $p\geq \frac{1}{m}$, and $u\in L^p(E)$ and nonnegative, we have 
\begin{align*}
\bigg(\frac{1}{\norm{\eta}_1}\int_E\abs{u^m-(\Mean{u}_E^\eta)^m}^p \eta dx\bigg)^\frac1p
&\leq c_o\bigg(\frac{1}{\norm{\eta}_1}\int_E\abs{u^m-(u^m)_E^\eta}^p \eta dx\bigg)^\frac{1}{p}
\\
&\leq 2c_o\bigg(\frac{1}{\norm{\eta}_1}\int_E\abs{u^m-(\Mean{u}_E^\eta)^m}^p \eta dx\bigg)^\frac1p,
\end{align*}
where $c_o$ is a constant that depends only on $m$ and $p$.
\end{lemma}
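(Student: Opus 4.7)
The second inequality is a direct application of the best-constant property \eqref{bcp}, taken in $L^p$ with the weight $\eta$ and the constant $c = (\Mean{u}_E^\eta)^m$. For the first inequality, set $a := \Mean{u}_E^\eta$ and $b := (u^m)_E^\eta$, and let $B$ denote the middle term in the statement. Since $m \in (0,1)$, concavity of $s \mapsto s^m$ and Jensen's inequality yield $b \le a^m$, while Jensen applied to the convex function $s \mapsto s^{1/m}$ (recall $1/m > 1$) yields $a \ge b^{1/m}$, so both $a^m - b$ and $a - b^{1/m}$ are nonnegative. Minkowski's inequality then reduces the first inequality to the scalar estimate $a^m - b \le c\,B$ for some $c = c(m)$.

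The key intermediate step is
\[
a - b^{1/m} \le \tfrac{1}{m}\,a^{1-m}\,B.
\]
Since $a - b^{1/m} \ge 0$, one has $a - b^{1/m} \le \tfrac{1}{\norm{\eta}_1}\int_E (u - b^{1/m})_+\,\eta\,dx$. On the set $\{u^m \ge b\}$, the convexity of $s \mapsto s^{1/m}$ together with the monotonicity of its derivative gives the secant-slope bound
\[
u - b^{1/m} \le \tfrac{1}{m}\,(u^m)^{1/m - 1}(u^m - b) = \tfrac{1}{m}\,u^{1-m}(u^m - b)_+.
\]
Applying H\"older's inequality with conjugate exponents $p/(p-1)$ and $p$, the average on the right splits as a power of $\tfrac{1}{\norm{\eta}_1}\int_E u^{(1-m)p/(p-1)}\eta\,dx$ times $B$. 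The hypothesis $p \ge 1/m$ is exactly the condition $(1-m)p/(p-1) \le 1$, so Jensen's inequality for the concave power $s \mapsto s^{(1-m)p/(p-1)}$ bounds the first factor by $a^{1-m}$ and the claim follows.

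Finally, I pass from $a - b^{1/m}$ to $a^m - b$ by a case split. If $b \ge a^m/2$, the mean value theorem for $s \mapsto s^m$ on $[b^{1/m}, a]$ gives $a^m - b \le m\,b^{(m-1)/m}(a - b^{1/m}) \le 2^{(1-m)/m}\,B$. If instead $b < a^m/2$, then $b^{1/m} \le 2^{-1/m}\,a$, so $a - b^{1/m} \ge (1 - 2^{-1/m})\,a$; combining with the intermediate estimate yields $a^m \le B/[m(1 - 2^{-1/m})]$ and hence $a^m - b \le a^m$ gives the desired bound. Taking $c_o := 1 + \max\{2^{(1-m)/m},\,1/[m(1 - 2^{-1/m})]\}$, which depends only on $m$, concludes the proof. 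The main obstacle is to locate where the sharp condition $p \ge 1/m$ enters: it appears only at the single H\"older-plus-Jensen step, and is what allows the lemma to reach the full singular range $m \in (0,1)$, in contrast to the restriction $q > 1/2$ (i.e., $m > 1/2$) imposed in Lemma~\ref{Lm:2.2}.
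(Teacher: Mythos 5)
Your proof is correct, and it reaches the conclusion by a genuinely different decomposition than the paper's. Both arguments agree on the skeleton: the second chain inequality is the best-constant property \eqref{bcp}, and the first reduces (via Jensen's inequality, which gives $(u^m)_E^\eta \le (\Mean{u}_E^\eta)^m$, and the triangle inequality) to bounding the gap between the two candidate constants by the middle integral $B$. Where you diverge is in how that gap is estimated. The paper keeps a free constant $a$, writes $\abs{a^m-\lambda^m}\sim(a+\lambda)^{m-1}\abs{a-\lambda}$, pushes $\abs{a-\lambda}$ inside the weighted mean, and splits the resulting integral over $\set{u\le 2a}$ and $\set{u>2a}$; on the first set it converts back via the same factorization, and on the second it uses the pointwise absorption $u^m\chi_{\set{u>2a}}\le\frac{2^m}{2^m-1}\abs{u^m-a^m}\chi_{\set{u>2a}}$, finishing with two applications of Jensen (with exponents $p$ and $mp\geq 1$). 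You instead prove the cleaner intermediate estimate $a-b^{1/m}\le \tfrac1m a^{1-m}B$ directly (secant-slope bound for the convex map $s\mapsto s^{1/m}$, then H\"older with exponents $p$ and $p/(p-1)$, then Jensen for the concave power $s\mapsto s^{(1-m)p/(p-1)}$), and only afterwards convert back to $a^m-b$ by the dichotomy $b\gtrless a^m/2$. Your route isolates the role of $p\geq 1/m$ in a single H\"older-plus-Jensen step and produces an explicit constant $c_o(m)$, at the mild cost of an extra case analysis; the paper's route trades that case analysis for a level-set split and a self-absorbing pointwise bound. Both hinge, as you correctly note, on the fact that $p\geq 1/m$ is exactly what turns the relevant power of $u$ into a concave (resp. at-least-linear) function, which is what lets the lemma handle the full range $m\in(0,1)$, unlike the restriction in Lemma~\ref{Lm:2.2}.
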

\begin{proof}
The first estimate follows by the best constant property. Concerning the second estimate, let
\[
\lambda=\Mean{u}_E^\eta\quad \text{ and }\quad e^m=(u^m)_E^\eta.
\]
By Jensen's inequality $e\leq \lambda$.
Now, for any $a\in \setR_+$  we estimate
\begin{align*}
\bigg(\frac{1}{\norm{\eta}_1}\int_E \abs{u^m-\lambda^m}^p\eta\,dx\bigg)^\frac1{p}\leq c \bigg(\frac{1}{\norm{\eta}_1}\int_E \abs{u^m-a^m}^p\eta\,dx\bigg)^\frac1p
+c \abs{a^m-\lambda^m}.
\end{align*}
Since for $x\in \set{u> 2a}$
\[
u^m(x)\leq \abs{u^m(x)-a^m}+a^m\leq \abs{u^m(x)-a^m}+\frac{u^m(x)}{2^m},
\]
we find
\[
u^m\chi_\set{u> 2a}\leq \frac{2^m}{2^m-1}\abs{u^m-a^m}\chi_\set{u> 2a}.
\]
Now 
\begin{align*}
\abs{a^m-\lambda^m}\sim (a+\lambda)^{m-1}\abs{a-\lambda}\leq& (a+\lambda)^{m-1}\frac{1}{\norm{\eta}_1}\int_{E}\abs{u-a}\eta\, dx\\
\leq& (a+\lambda)^{m-1}\frac{1}{\norm{\eta}_1}\int_{E}\abs{u-a}\chi_{\set{u\leq 2a}}\eta\, dx\\
&+ (a+\lambda)^{m-1}\frac{1}{\norm{\eta}_1}\int_{E}\abs{u-a}\chi_{\set{u> 2a}}\eta\, dx\\
\leq& \frac{a^{m-1}}{\norm{\eta}_1}\int_{E}\abs{u-a}\chi_{\set{u\leq 2a}}\eta\, dx\\
&+ \frac{\lambda^{m-1}}{\norm{\eta}_1}\int_{E}u\chi_{\set{u> 2a}}\eta \, dx\\
\leq& \frac{c}{\norm{\eta}_1}\int_{E}(a+u)^{m-1}\abs{u-a}\chi_{\set{u\leq 2a}}\eta \, dx\\
&+\bigg(\frac{1}{\norm{\eta}_1}\int_{E}u\chi_{\set{u> 2a}}\eta\, dx\bigg)^m\\
\leq& \frac{c}{\norm{\eta}_1}\int_{E}\abs{u^m-a^m}\chi_{\set{u\leq 2a}}\eta\, dx\\
&+c\bigg(\frac{1}{\norm{\eta}_1}\int_{E}\abs{u^m-a^m}^\frac{1}{m}\chi_{\set{u> 2a}}\eta\, dx\bigg)^{m},
\end{align*}
and this implies the result by Jensen's inequality.
\end{proof}
 We close the section with the following estimate, which was originally derived in \cite[Lemma~A.1]{Sch13}. See also~\cite{GiaSch16}. 

\begin{lemma}\label{seb-app}
Let $Q_1\subset Q$ be two cylinders and $f\in L^q(Q)$ for some $q\in[1,\infty)$. If for some $\epsilon\in(0,1)$ we have
\[
|(f)_{Q_1}|\le\epsilon [(|f|^q)_Q]^{\frac1q},
\] 
then
\[
|(f)_{Q_1}|\le\epsilon [(|f|^q)_Q]^{\frac1q}\le\frac{\epsilon}{1-\epsilon}\left(1+\left(\frac{|Q|}{|Q_1|}\right)^{\frac1q}\right)\left(\dashint_Q |f-(f)_Q|^q\,dxdt\right)^{\frac1q}.
\]
\end{lemma}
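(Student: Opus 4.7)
The first inequality in the conclusion is exactly the hypothesis, so the whole content is to bound $\epsilon[(|f|^q)_Q]^{1/q}$ from above by oscillation-type quantity. The plan is a two-step triangle-inequality argument, with an absorption step at the end.

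First I would split $f = (f - (f)_Q) + (f)_Q$ and apply the triangle inequality in $L^q(Q)$ to obtain
\begin{equation*}
\bigl[(|f|^q)_Q\bigr]^{\frac1q} \;\le\; \left(\dashint_Q |f-(f)_Q|^q\,dxdt\right)^{\frac1q} + |(f)_Q|.
\end{equation*}
Next, I would control $|(f)_Q|$ via the hypothesis by comparing it to $(f)_{Q_1}$:
\begin{equation*}
|(f)_Q| \;\le\; |(f)_{Q_1}| + |(f)_Q - (f)_{Q_1}| \;\le\; \epsilon\bigl[(|f|^q)_Q\bigr]^{\frac1q} + |(f)_Q - (f)_{Q_1}|.
\end{equation*}

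The key (mildly technical) step is to produce the factor $(|Q|/|Q_1|)^{1/q}$ rather than $|Q|/|Q_1|$: one writes $(f)_Q - (f)_{Q_1} = \dashint_{Q_1} ((f)_Q - f)\,dxdt$, applies Jensen's inequality for $L^q$ averages over $Q_1$, and then enlarges the domain of integration from $Q_1$ to $Q$, which is where the volume ratio enters to the power $1/q$:
\begin{equation*}
|(f)_Q - (f)_{Q_1}| \;\le\; \left(\dashint_{Q_1} |f-(f)_Q|^q\,dxdt\right)^{\frac1q} \;\le\; \left(\frac{|Q|}{|Q_1|}\right)^{\frac1q}\left(\dashint_Q |f-(f)_Q|^q\,dxdt\right)^{\frac1q}.
\end{equation*}

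Finally, I would chain the three inequalities to get
\begin{equation*}
\bigl[(|f|^q)_Q\bigr]^{\frac1q} \;\le\; \epsilon\bigl[(|f|^q)_Q\bigr]^{\frac1q} + \left(1 + \left(\frac{|Q|}{|Q_1|}\right)^{\frac1q}\right)\left(\dashint_Q |f-(f)_Q|^q\,dxdt\right)^{\frac1q},
\end{equation*}
and absorb the term $\epsilon[(|f|^q)_Q]^{1/q}$ into the left-hand side (legitimate since $\epsilon<1$ and, by hypothesis, $[(|f|^q)_Q]^{1/q}$ is finite). Multiplying the resulting inequality by $\epsilon$ yields the claimed bound. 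The main obstacle — if one wants to call it that — is only the correct placement of Jensen's inequality so that the volume ratio appears with exponent $1/q$; no other delicate point arises.
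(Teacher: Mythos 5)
Your proof is correct, and it is the natural argument for this estimate. The paper itself does not reproduce a proof (it refers to \cite[Lemma~A.1]{Sch13}), but the route you take—triangle inequality to isolate the oscillation, Jensen plus domain enlargement to compare $(f)_Q$ with $(f)_{Q_1}$ at the cost of $(|Q|/|Q_1|)^{1/q}$, then absorption using $\epsilon<1$ and $f\in L^q(Q)$—is precisely the standard one, and your bookkeeping of the constants reproduces the stated bound exactly.
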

\section{Constructing Proper Cylinders}
Since the construction of intrinsic cylinders below has the potential to be significant for future application not only for the porous medium equation, but also for general singular systems of various kinds, we will use a notation that emphasizes the generality of the approach. Indeed, the covering can be introduced with respect to any integrable function $f$, related to a scaling exponent $p$ or $m$. 

Consider a degenerate or singular diffusion equation or system: by \emph{intrinsic} we mean a scaling that inherits the local ellipticity coefficient (i.e. the diffusion coefficient) with respect to the mean value of the function. For the porous medium equation, the ellipticity coefficient is of order $m u^{m-1}$. In this case we will call $Q_{s,\sqrt{s/\theta}}$ a \emph{$K$-intrinsic cylinder}, if 
\begin{align}\label{def:intr0}
\frac{\theta}{K}\leq \Mean{u^{m+1}}_{Q_{s,\sqrt{s/\theta}}}^\frac{1-m}{m+1}\leq {K}{\theta},
\end{align}
for some $K\ge1$. We will call $Q_{s,\sqrt{s/\theta}}$ a \emph{$K$-sub-intrinsic cylinder}, if only the estimate from above holds in \eqref{def:intr0}.  Observe, that (in the intrinsic case) $\theta^{-1}\sim u^{m-1}$ is somehow proportional to the scaling of ellipticity.

In the case of the $p$--Laplacian
\begin{equation}\label{plap}
\begin{aligned}
&u \in C^{0}_{\loc}\pa{0,T; L^{2}_{\loc}(E)} \cap L^{p}_{\loc}\pa{0,T; W^{1,p}_{\loc}(E)}\\
&u_{t} - \div\pa{\abs{Du}^{p-2} Du} = f\qquad \text{weakly in $E_T$,}
\end{aligned}
\qquad p > 1,
\end{equation}
 the ellipticity coefficient is of order $\abs{D u}^{p-2}$. Here one usually considers the size $\lambda\sim \abs{D u}$. The canonic analogue of $\theta$ is therefore $\lambda^{2-p}$, and $p-1$ corresponds to $m$.  

Observe that this type of cylinders were initially introduced for the degenerate $p$--Laplacian (cf. \cite{Sch13}). For this reason alone, we will stick to the exponent $p\equiv m+1$ and we will relate it to a general integrable function $f$; one only needs to substitute $f$ with $\abs{D u}^p$ or $u^{m+1}$, or any other suitable function, depending on the problem under consideration. 

In terms of the function $f$ and the exponent $p$, we will then call $Q_{s,\sqrt{s/\theta}}(z)=:Q_{s,\sqrt{s\lambda^{p-2}}}(z)$ a \emph{$K$-intrinsic cylinder}, if 
\begin{align}\label{def:intr}
\frac{\theta}{K}=\frac{\lambda^{2-p}}{K}\leq \Mean{\abs{f}}_{Q_{s,\sqrt{\lambda^{p-2}s}}}^\frac{2-p}{p}\leq K\lambda^{2-p}=K\theta,
\end{align}
for some $K\ge1$, and we will call $Q_{s,\sqrt{s\lambda^{p-2}}}$ a \emph{$K$-sub-intrinsic cylinder}, if only the estimate from above in \eqref{def:intr} holds true.

In the following, we will avoid any reference to the constant $K$, meaning that the cylinders will be either intrinsic or sub-intrinsic for some proper $K$. Moreover, $Q_s^{\theta_s}$, $Q_s^{\lambda_s}$, and $Q_{s,r(s)}$ will all denote the same object: the use of either one of the three possible notation depends on the particular geometric feature we want to emphasize.
\begin{lemma}\label{lem:scal}
Let $p\in (\frac{2n}{n+2}, 2)$. Let $Q_{S,R}(t,x)\subset \setR^{n+1}$, $f\in L^1(Q_{S,R}(t,x))$ and $\hat{b}\in (0,1/2]$, such that $\hat{b}<(n+2)p-2n$. For every $0<s\leq S$ there exist $r(s)$, $\lambda_s$, and $Q_{s,r(s)}(t,x)$ with the following properties. Let $s,\sigma\in (0,R]$ and $s<\sigma$, then 
\begin{enumerate}[(a)]
\item\label{scal:-1}  $0\leq r(s)\leq R$ and $r(s)=:\sqrt{\lambda_s^{p-2}s}=:\sqrt{s/\theta_s}$. In particular, $Q_{s,r(s)}(t,x)=:Q_s^{\lambda_s}=:Q_s^{\theta_s}\subset E_T$.
\item\label{scal:0}  $r(s)\leq \big(\frac{s}{\sigma}\big)^{\hat{b}}r(\sigma)$, the function $r=r(s)$ is continuous and strictly increasing on $[0,S]$. In particular, $Q_s^{\lambda_s}\subset Q_\sigma^{\lambda_{\sigma}}$.
 \item \label{scal:2} $\displaystyle\dashiint_{Q_s^{\lambda_s}}\abs{f}\,dxdt\leq\lambda_s^p$, i.e. $Q_s^{\lambda_s}$ is sub-intrinsic. 
\item \label{scal:5}  If $r(s)<\big(\frac{s}{\sigma}\big)^{\hat{b}}r(\sigma)$, then there exists $s_1\in[s,\sigma)$ such that $Q^{\lambda_{s_1}}_{s_1}$ is intrinsic.
\item \label{scal:6} If for all $s\in (s_1,\sigma)$, $Q_s^{\lambda_s}$ is strictly sub-intrinsic, then $\lambda_{s}^{2-p}\leq\big(\frac{s}{\sigma}\big)^{\beta}\lambda_{\sigma}^{2-p}$ for all $s\in [s_1,\sigma]$ and $\beta=(1-2\hat{b})\in [0,1)$.
 \item \label{scal:3} For $\gamma\in (0,1]$, we have
 \begin{align*}
 &r(s)\leq c\gamma^{-\hat{a}}r(\gamma s),\\ 
 &\abs{Q_{\gamma s}^{\lambda_{\gamma s}}}^{-1}
\leq c\gamma^{-n\hat{a}-2}\abs{{Q_{s}^{\lambda_{s}}}}^{-1},\\ 
&\lambda_{\gamma s}^{2-p}\leq c\gamma^{-\max\set{2\hat{a},\beta}}\lambda_s^{2-p},
\end{align*} 
with $\hat{a}=\hat b+\frac{2}{2p-(2-p)n}$. Observe, that $\hat{a}\to \infty$, as $p\searrow\frac{2n}{n+2}$.
\item \label{scal:7} 
For $c>1$ we have $Q_{cs,cr(s)}\subset Q_{\tilde{c} s,r(\tilde{c}s)}\subset Q_{\overline{c} s,\overline{c} r(s)}$ for $\tilde c=c^\frac1{\hat b}$ and $\overline{c}=\max\set{\tilde c,\tilde{c}^{\hat{a}}}$.
\end{enumerate}
The constant $c$ only depends on the dimension $n$ and on $p$.
\end{lemma}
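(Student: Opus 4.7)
My plan is to construct $r(s)$ in two stages: first a ``natural'' stopping--time radius $r^{*}(s)$, then a monotonization that enforces (b). For the first stage, introducing $\lambda(r):=(s/r^{2})^{1/(2-p)}$ (so that $r=\sqrt{\lambda(r)^{p-2}s}$), a direct calculation shows that the sub-intrinsic inequality $\dashiint_{Q_{s,r}}|f|\,dxdt\le\lambda(r)^{p}$ is equivalent to
\[
r^{\alpha}\iint_{Q_{s,r}}|f|\,dxdt\le C_{o}\,s^{2/(2-p)},\qquad \alpha:=\frac{(n+2)p-2n}{2-p}>0,
\]
where positivity of $\alpha$ is exactly the supercritical assumption $p>2n/(n+2)$ and $C_{o}$ depends only on $n$ and $p$. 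The left--hand side is jointly continuous, non-decreasing in $r$, and vanishes at $r=0$, so
\[
r^{*}(s):=\sup\bigl\{r\in(0,R]:r^{\alpha}\iint_{Q_{s,r}}|f|\,dxdt\le C_{o}\,s^{2/(2-p)}\bigr\}
\]
is well defined, continuous in $s$, and strictly positive; $Q_{s,r^{*}(s)}$ is sub-intrinsic, and whenever $r^{*}(s)<R$ the defining inequality is saturated, i.e.\ $Q_{s,r^{*}(s)}$ is intrinsic.

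For the second stage I would set
\[
r(s):=\inf_{\sigma\in[s,S]}\Bigl(\frac{s}{\sigma}\Bigr)^{\hat{b}}r^{*}(\sigma),\qquad \lambda_{s}:=(s/r(s)^{2})^{1/(2-p)},
\]
so that (a) holds by definition and (b) follows at once from the inclusion $[s_{2},S]\subset[s_{1},S]$ when $s_{1}<s_{2}$. Continuity is inherited from continuity of $r^{*}$, and strict monotonicity is an automatic consequence of (b): for $s_{1}<s_{2}$ one has $r(s_{1})\le(s_{1}/s_{2})^{\hat{b}}r(s_{2})<r(s_{2})$. For (c), the choice $\sigma=s$ yields $r(s)\le r^{*}(s)$, and since $r\mapsto r^{\alpha}\iint_{Q_{s,r}}|f|$ is non-decreasing, the sub-intrinsic condition is inherited by $Q_{s,r(s)}$. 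For (d), if (b) is strict at $\sigma$, the rewriting $(s/\sigma)^{\hat{b}}r(\sigma)=\inf_{\sigma'\ge\sigma}(s/\sigma')^{\hat{b}}r^{*}(\sigma')$ shows that the infimum for $r(s)$ is attained at some $s_{1}\in[s,\sigma)$; minimality at $s_{1}$ forces both $r(s_{1})=r^{*}(s_{1})$ and $r^{*}(s_{1})<R$ (otherwise any $\sigma'>s_{1}$ strictly improves the infimum at $s_{1}$), yielding the desired intrinsic cylinder. Property (e) is the contrapositive: strict sub-intrinsicity on a full subinterval forces equality in (b) throughout, which via $\lambda_{s}^{2-p}=s/r(s)^{2}$ delivers $\lambda_{s}^{2-p}=(s/\sigma)^{1-2\hat{b}}\lambda_{\sigma}^{2-p}$.

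The remaining work, and the main obstacle, is item (f). The upper bound $r(\gamma s)\le\gamma^{\hat{b}}r(s)$ is a special case of (b). For the matching lower bound I would exploit the trivial estimate $\iint_{Q_{\sigma,r^{*}(\sigma)}}|f|\,dxdt\le\|f\|_{L^{1}(Q_{S,R})}$ which, combined with the defining (near-)equality for $r^{*}$, produces a power-type bound $r^{*}(\sigma)\ge c\,\sigma^{2/((n+2)p-2n)}$ on every intrinsic scale; feeding this into the infimum for $r(\gamma s)$ and optimizing in $\sigma'\ge\gamma s$ gives $r(\gamma s)\ge c\,\gamma^{\hat{a}}r(s)$ with $\hat{a}=\hat{b}+2/((n+2)p-2n)$. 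The three scaling inequalities in (f) then follow from $|Q_{s}^{\lambda_{s}}|\simeq s\,r(s)^{n}$ and $\lambda_{s}^{2-p}=s/r(s)^{2}$; property (g) is a short computation chaining (b) and (f). This lower bound is the technical heart of the lemma: it is the place where the supercriticality of $p$ enters quantitatively, it explains why $\hat{a}\to\infty$ as $p\searrow 2n/(n+2)$, and it is precisely the assumption $\hat{b}<(n+2)p-2n$ that keeps the monotonization compatible with the sub-intrinsic structure.
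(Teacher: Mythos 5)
Your construction of $r^*(s)$ and the monotonization by $r(s)=\inf_{\sigma\in[s,S]}(s/\sigma)^{\hat b}r^*(\sigma)$ is essentially the same as the paper's ($\tilde r$ and $\min$ instead of $\inf$), and your arguments for (a)--(e) match the paper's in substance: the continuity of $r^*$ works because $g(s,r)=r^\alpha\iint_{Q_{s,r}}|f|$ is the product of a strictly increasing power with a nondecreasing integral and hence cannot be locally constant, the minimizer argument gives (d), and its contrapositive gives (e). Up to (e) the proposal is correct.

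The gap is exactly where you flagged the main difficulty, item (f). Your lower bound $r^*(\sigma)\geq c\,\sigma^{2/((n+2)p-2n)}$ is extracted by bounding $\iint_{Q_{\sigma,r^*(\sigma)}}|f|$ by $\|f\|_{L^1(Q_{S,R})}$; the resulting constant $c$ therefore depends on $f$, whereas the lemma requires $c$ to depend only on $n$ and $p$. Moreover, even allowing an $f$-dependent constant, the claimed optimization does not produce $r(\gamma s)\geq c\gamma^{\hat a}r(s)$: the infimum of $(\gamma s/\sigma)^{\hat b}\,c\,\sigma^{2/((n+2)p-2n)}$ over $\sigma\geq\gamma s$ only yields $r(\gamma s)\geq c\,(\gamma s)^{2/((n+2)p-2n)}$, and combining this with the available upper bound $r(s)\leq c\,s^{\hat b}$ gives an inequality in the wrong direction (since $2/((n+2)p-2n)>\hat b$ and $s\leq 1$). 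The paper's argument avoids both problems by a different mechanism: if $r(\gamma s)<\gamma^{\hat b}r(s)$, then by (d) there is $\gamma_1\in[\gamma,1)$ with $r(\gamma s)=(\gamma/\gamma_1)^{\hat b}r(\gamma_1 s)$ and $Q^{\lambda_{\gamma_1 s}}_{\gamma_1 s}$ intrinsic; writing the sub-intrinsic estimate at scale $s$ and the intrinsic equality at scale $\gamma_1 s$ and using the \emph{inclusion} $Q^{\lambda_{\gamma_1 s}}_{\gamma_1 s}\subset Q^{\lambda_s}_s$ (so that $\iint_{Q_{\gamma_1 s}}|f|\leq\iint_{Q_s}|f|$) lets one cancel the two integral factors and obtain $(r(s))^{2p-(2-p)n}\leq\gamma_1^{-2}(r(\gamma_1 s))^{2p-(2-p)n}$ with a purely dimensional constant, whence $r(s)\leq c\gamma^{-\hat a}r(\gamma s)$. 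You should replace the $L^1$-norm bound with this nesting/cancellation argument; without it, (f) and consequently (g) remain unproved in the form stated.
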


\begin{proof}
Let $Q_{S,R}(t,x)\subset E_T$. In the following we often omit the point $(t,x)$. Moreover, whenever we integrate both in space and in time, we use the symbol $dz$.
We start by defining for every $s\in (0,S]$ the quantity
\begin{align*}
\tilde{r}(s)
=\sup\Bigset{\rho<R\,\Big|\,
\bigg(\int^{t+\frac{s}{2}}_{t-\frac{s}{2}}\int_{B_{\rho}(x)}\abs{f} dz\bigg)^{2-p}\rho^{2p}\abs{B_\rho}^{p-2}\leq s^2}.
\end{align*}

The function $\tilde{r}(s)$ is well defined and strictly positive for $r>0$. This is due to the fact that
\[
b_0:=(p-2)n+2p=(n+2)p-2n>0,
\]
since we assume $\displaystyle p\in \Big(\frac{2n}{n+2},2\Big)$.We define $\tilde{\lambda}_s$ by the equation $\tilde{\lambda}_s^{2-p}=\frac{s}{\tilde{r}^2(s)}$.
By construction we find that
\begin{align}
  \label{eq:sforp}
\begin{aligned}
  \bigg(\dashiint_{Q_{{s,\tilde{r}(s)}}}\abs{f} dz\bigg)^{2-p} {\tilde r(s)}^{2p}\leq s^{p}.
\end{aligned}
\end{align}
This implies that
 \[
 \bigg(\dashiint_{Q_{{s,\tilde{r}(s)}}}\abs{f} dz\bigg)^{2-p} \leq \Big(\frac{s}{{\tilde r}(s)^2}\Big)^{p},
\]
that is
\begin{align}
 \label{eq:subint}
\dashiint_{Q_{{s,\tilde{r}(s)}}}\abs{f} dz \leq \tilde{\lambda}_s^p.
 \end{align}
Next, we will show that $\tilde{r}(s)$ is continuous for $s\in(0,S]$. First of all, it is enough to show that $\kappa(s):=\tilde r(s)^\frac{2p}{2-p}\abs{B_{\tilde r(s)}}^{-1}$ is continuous. 

Take $0<s_1<s< S$. For $\epsilon>0$, there exists a $\delta>0$, such that
\[
\int^{t-s_1}_{t-s}\int_{B_{R}}\abs{f} dz\leq \epsilon\qquad \text{ for all }\ {s-s_1<\delta}.
\]
Now, in case $\kappa(s_1)<\kappa(s)(\leq R^\frac{2p}{2-p}\abs{B_{R}}^{-1})$, we find that 
\begin{align*}
{\kappa(s)}\,\int^{t}_{t-s}\int_{B_{\tilde r(s)}}\abs{f} dz-{\kappa(s_1)}\,\int^{t}_{t-s_1}\int_{B_{{\tilde r}(s_1)}}\abs{f} dz\leq s^\frac{2}{2-p}-s_1^\frac{2}{2-p}\leq c\delta^\frac{2}{2-p}
\end{align*}
and hence
\begin{align*}
\left[\int^{t}_{t-s_1}\int_{B_{\tilde r(s_1)}}\abs{f} dz\right]\abs{\kappa(s)-\kappa(s_1)}\leq c\delta^\frac{2}{2-p},
\end{align*}
which concludes this argument, since $\displaystyle\int^{t}_{t-s_1}\int_{B_{\tilde r(s_1)}}\abs{f} dz>0$ when $\tilde r(s_1)<R$.

\noindent In case $\kappa(s)<\kappa(s_1)(\leq R^\frac{2p}{2-p}\abs{B_{R}}^{-1})$, we find
\begin{align*}
\kappa(s)\,\int^{t}_{t-s_1}\int_{B_{\tilde r(s_1)}}\abs{f} dz+\epsilon\geq {\kappa(s)}\,\int^{t}_{t-s}\int_{B_{\tilde r(s)}}\abs{f} dz\geq s^\frac{2}{2-p},
\end{align*}
since $\int^{t}_{t-s_1}\int_{B_{\tilde r(s_1)}}\abs{f} dz>0$.
Hence
\begin{align*}
\int^{t}_{t-s_1}\int_{B_{\tilde r(s_1)}}\abs{f} dz\abs{\kappa(s_1)-\kappa(s)}\leq {s_1^\frac{2}{2-p}-s^\frac{2}{2-p}}
+\epsilon\leq \epsilon.
\end{align*}
This concludes the proof of the continuity, as the case $\tilde r(s)=\tilde r(s_1)$ is trivial.

Now it might happen, that $s<\sigma$ and $\tilde{r}(s)>\tilde{r}(\sigma)$. To avoid that, for $\hat{b}\in (0,b_0)$ we define 
\begin{align*}
r(s)={\min_{s\leq a\leq S}}\Big(\frac{s}{a}\Big)^{\hat{b}}\tilde{r}(a). 
\end{align*}
The minimum exists, as $\Big(\frac{s}{a}\Big)^{\hat{b}}\tilde{r}(a)$ is continuous in $a$.
If we take $\sigma\in(s,S]$
\begin{align}
\label{eq:grow}
 r(s)=\min\Bigset{{\min_{s\leq a\leq \sigma}}\Big(\frac{s}{a}\Big)^{\hat{b}}\tilde{r}(a),\Big(\frac{s}{\sigma}\Big)^{\hat{b}}r(\sigma)}
\end{align}
we find that
$r(s)< r(\sigma)$. Now we define $\lambda_s:=\big(\frac{r(s)^2}{s}\big)^\frac1{p-2}\geq \tilde{\lambda}_s$ and $Q_s^{\lambda_s}:=Q_{s,r(s)}$. By this definition we find \eqref{scal:-1} and \eqref{scal:0}, as $\lim_{s\to0}r(s)\leq \lim_{s\to 0}\big(\frac{s}{S}\big)^{\hat{b}}{R}=0$. 

We show \eqref{scal:2}, by \eqref{eq:sforp}--\eqref{eq:subint} 
\begin{equation}\label{eq:mon}
\begin{aligned}
 \dashiint_{Q_{s,r(s)}}\abs{f}\, dz 
 \leq \frac{\abs{B_{\tilde{r}(s)}}}{\abs{B_{r(s)}}} \dashiint_{Q_{s,\tilde{r}(s)}}\abs{f}\, dz
&=
\Big(\frac{{\lambda}_s}{\tilde{\lambda}_s}\Big)^\frac{(2-p)n}{2}\dashiint_{{Q_{s,\tilde{r}(s)}}}\abs{f}\, dz\\
&\leq \tilde{\lambda}_s^\frac{2p+np-2n}{2}{\lambda}_s^\frac{2n-pn}{2}\leq \lambda_{s}^p.
\end{aligned}
\end{equation}
To prove \eqref{scal:5} we assume that $r(s)<\big(\frac{s}{\sigma}\big)^{{\hat b}}r(\sigma)$. Then there exists $s_1\in [s,\sigma)$, such that 
\[
\Big(\frac{s}{s_1}\Big)^{\hat{b}}\tilde{r}(s_1)=r(s)=\min_{s\leq a\leq S}\Big(\frac{s}{a}\Big)^{\hat{b}}\tilde{r}(a)\leq \Big(\frac{s}{s_1}\Big)^{\hat{b}}\min_{s_1\leq a\leq S}\Big(\frac{s_1}{a}\Big)^{\hat{b}}\tilde{r}(a)=\Big(\frac{s}{s_1}\Big)^{\hat{b}} r(s_1).
\]
This implies that $\tilde{r}(s_1)=r(s_1)$. Since by our assumption we also have that
\[
r(s_1)<\big(\frac{s_1}{\sigma}\big)^{\hat{b}}r(\sigma)\leq \big(\frac{s_1}{S}\big)^{\hat{b}} R,
\] 
we deduce by \eqref{eq:sforp} that $Q_{s_1,r(s_1)}=Q_{s_1}^{\lambda_{s_1}}$ is intrinsic. This implies \eqref{scal:5}. 

By \eqref{scal:5}, if $Q_a^{\lambda_a}$ is strictly sub-intrinsic for all $a\in(s,\sigma)$, then 
$r(a)=\big(\frac{a}{\sigma}\big)^{\hat{b}}r(\sigma)$ for all $a\in(s,\sigma)$.
Now we {compute}
\[
 \lambda_a^{2-p}= \frac{a}{r(a)^2}=\frac{a}{\big(\frac{a}{\sigma}\big)^{2\hat{b}}r(\sigma)^2}=\Big(\frac{a}{\sigma}\Big)^{1-2\hat{b}}\lambda_\sigma^{2-p},
\]
and this yields \eqref{scal:6}, with $\beta={1-2\hat{b}}$.

To prove \eqref{scal:3} we take $\gamma\in(0,1)$. First we show the estimate for $r(s)$. If $r(\gamma s)=\gamma^{\hat{b}}r(s)$ there is nothing to show. If $r(\gamma s)<\gamma^{\hat{b}}r(s)$, we find by \eqref{scal:5} that there is a $\gamma_1\in[\gamma,1)$ with $r(\gamma s)=\big(\frac{\gamma}{\gamma_1}\big)^{\hat{b}} r(\gamma_1 s)$ and $Q_{\gamma_1 s}^{\lambda_{\gamma_1 s}}$ is intrinsic. 
Next, by the sub-intrinsicity of $Q^{\lambda_s}_s$ and the intrinsicity of $Q^{\lambda_{\gamma_1s}}_{\gamma_1 s}$ and by the calculation made in \eqref{eq:subint} and \eqref{eq:sforp}, we find that
\begin{align*}
&\bigg({\iint_{Q_{s,r(s)}}}\abs{f} dz\bigg)^{2-p}(r(s))^{2p-(2-p)n}\abs{B_1}^{2p-(2-p)n}
\\&\quad \leq s^2=\frac{\gamma_1^2s^2}{\gamma_1^2}
 =\frac{1}{\gamma_1^2} \bigg({\iint_{Q_{\gamma_1 s,r(\gamma_1 s)}}}\abs{f} dz\bigg)^{2-p}(r(\gamma_1 s))^{2p-(2-p)n}\abs{B_1}^{2p-(2-p)n}.
\end{align*}
Since by construction $Q^{\lambda_{\gamma_1s}}_{\gamma_1 s}\subset Q^{\lambda_s}_s$, this implies that
\[
(r(s))^{2p-(2-p)n}\leq \frac{1}{\gamma_1^2}(r(\gamma_1 s))^{2p-(2-p)n}
\]
and so
\[
r(s)\leq \frac{c}{\gamma_1^\frac{2}{2p-(2-p)n}}r(\gamma_1 s)=\frac{c}{\gamma_1^\frac{2}{2p-(2-p)n}}\Big(\frac{\gamma_1}{\gamma}\Big)^{\hat{b}}r(\gamma s)\leq\frac{c}{\gamma^{\hat b+\frac{2}{2p-(2-p)n}}}r(\gamma s)=c \gamma^{-\hat{a}}r(\gamma s),
\]
with $\hat{a}=\hat b+\frac{2}{2p-(2-p)n}$. Hence the estimate on $r(s)$ is proved. 

The estimate on the size of the intrinsic cylinders is a direct consequence. In order to give an estimate for the $\lambda_s$, {once more} we may assume that $r(\gamma s)<{\gamma^{\hat b}}r(s)$, since otherwise $\lambda_{\gamma s}^{2-p}=\gamma^\beta\lambda_s^{2-p}$ and there is nothing to show. Hence, by the above definitions and \eqref{scal:2} we find
\begin{align*}
 \lambda_{\gamma s}^{2-p}= \Big(\frac{\gamma}{\gamma_1}\Big)^{\beta }\lambda_{\gamma_1 s}^{2-p}=\Big(\frac{\gamma}{\gamma_1}\Big)^{\beta}\frac{\gamma_1 s}{(r(\gamma_1 s))^2}\leq \Big(\frac{\gamma}{\gamma_1}\Big)^{\beta }\Big(\frac{r(s)}{r(\gamma_1 s)}\Big)^2\lambda_s^{2-p}
 \leq c\gamma^{-2\hat{a} }\lambda_s^{2-p}
\end{align*}
which closes the argument for \eqref{scal:3}. Finally, \eqref{scal:7} follows directly by \eqref{scal:0} and \eqref{scal:3}.
\end{proof}
In the sequel, we will make heavy use of the fact that for $\gamma\in (0,1]$, up to a constant,
\begin{equation}\label{eq:r}
\begin{aligned}
r(\gamma s)&\leq \gamma^{\hat{b}} r(s)\leq \gamma^{\hat{b}-\hat{a}}r(\gamma s),
\\
\theta_{s}&\leq \gamma^{2\hat{b}-1}\theta_{s\gamma}\leq \gamma^{2(\hat{b}-\hat{a})}\theta_{s}.
\end{aligned}
\end{equation}

We {will} also use the following version of Vitali's covering from \cite[Lemma 5.4]{GiaSch16}.
It is inspired by \cite[Chapter~1, Lemma~1 and Lemma~2]{stein:1993}. See also \cite[Paragraph~1.5, Theorem~1]{Ev-Gar}.
\begin{lemma}\label{Lm:Vitali:1}
Let $\Omega\subset \setR^M$ and $R\in \setR$. Let there be given a two-parameter family ${\mathcal F}$ of nonempty and open sets
\[
\set{U(x,r)\,|\,x\in \Omega,\, r\in (0,R]},
\] 
which satisfy 
\begin{enumerate}[(i)]
\item They are \emph{nested}, that is, 
\begin{equation}\label{Eq:nested}
\text{ for any }\ x\in\Omega,\ \text{ and }\ 0<s<r\le R,\ U(x,s)\subset U(x,r);
\end{equation}
\item There exists a constant $c_1{>1}$, such that 
\begin{equation}\label{vit-2}
U(x,r)\cap U(y,r)\neq \emptyset\ \ \Rightarrow\ \ U(x,r)\subset U(y,c_1r).
\end{equation}
\item There exists a constant $a>1$ such that, for all $r\in (0,R]$,
\begin{equation}\label{vit-1}
0<\abs{U(x,2r)}\leq a\abs{U(x,r)}<\infty.
\end{equation}
\end{enumerate}
Then we can find a disjoint subfamily 
$\set{U_i}_{i\in \setN}=\set{U(x_i,r_{x_i})}_{i\in\setN}$, such that
\[
 \bigcup\limits_{x\in\Omega}U(x,r_x)\subset \bigcup_{i\in \setN} \tilde{U}_i,
 \]
 with $\tilde{U}_i=U(x_i,2c_1 r_{x_i})$, $\abs{U_i}\sim \abs{\tilde{U}_i}$
 and
 \[
\abs{\Omega}\leq c\sum_i\abs{U_i},
 \]
where the constant $c>1$ depends only on $c_1$, $a$, and the dimension $M$.
\end{lemma}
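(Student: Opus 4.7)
The proof follows the classical Vitali selection scheme: dyadic bucketing of the radii, greedy maximal selection within each bucket, and control of the final enlargement via the doubling hypothesis \eqref{vit-1}. The two-parameter structure, together with \eqref{Eq:nested}--\eqref{vit-1}, supplies precisely the right tools.

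First I will partition the family $\{U(x,r_x)\}_{x\in\Omega}$ into dyadic buckets
\[
\mathcal{F}_j := \bigl\{U(x,r_x) \st 2^{-j}R < r_x \leq 2^{-j+1}R \bigr\},\qquad j\geq 1,
\]
and proceed inductively: from each $\mathcal{F}_j$ I extract, by Zorn's lemma, a maximal subcollection $\mathcal{G}_j$ whose elements are pairwise disjoint and also disjoint from every element of $\mathcal{G}_1\cup\cdots\cup \mathcal{G}_{j-1}$. The total selection $\mathcal{G}:=\bigcup_j\mathcal{G}_j$ consists of pairwise disjoint, nonempty, open subsets of $\R^M$, hence is at most countable by separability; I enumerate it as $\set{U_i}_{i\in\setN}=\set{U(x_i,r_{x_i})}_{i\in\setN}$.

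Next I will establish the covering $\bigcup_{x\in\Omega}U(x,r_x)\subset\bigcup_i\tilde U_i$. Fix any $U(x,r_x)\in\mathcal{F}_k$; if it already belongs to $\mathcal{G}_k$, there is nothing to do. Otherwise maximality forces $U(x,r_x)$ to intersect some $U(y,r_y)\in\mathcal{G}_1\cup\cdots\cup\mathcal{G}_k$, and since $U(y,r_y)$ lies in a no-later bucket, $r_y>2^{-k}R\geq r_x/2$. Setting $\rho:=\max(r_x,r_y)\leq 2r_y$, nestedness \eqref{Eq:nested} gives $U(x,r_x)\subset U(x,\rho)$ and $U(y,r_y)\subset U(y,\rho)$, so the two equal-radius sets $U(x,\rho)$ and $U(y,\rho)$ intersect. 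Property \eqref{vit-2} now yields
\[
U(x,r_x)\subset U(x,\rho)\subset U(y,c_1\rho)\subset U(y,2c_1 r_y)=\tilde U_i,
\]
which is the desired containment.

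Finally, iterating \eqref{vit-1} a bounded number of times (depending only on $c_1$ and $a$) gives $|\tilde U_i|\leq C(c_1,a)|U_i|$; the reverse inequality $|U_i|\leq|\tilde U_i|$ is automatic from \eqref{Eq:nested}, so $|U_i|\sim|\tilde U_i|$. Since the $U_i$ are pairwise disjoint and $\Omega\subset\bigcup_{x\in\Omega}U(x,r_x)$ (implicit in the set-up, as each $x\in\Omega$ lies in the corresponding $U(x,r_x)$), I conclude
\[
|\Omega|\leq\sum_i|\tilde U_i|\leq C(c_1,a)\sum_i|U_i|.
\]
The main (mild) subtlety I expect is the bookkeeping around \eqref{vit-2}: that property is stated only for equal radii, so both $r_x$ and $r_y$ must first be lifted to the common value $\rho$ via nestedness before invoking it. This is precisely why the enlargement factor splits as $2c_1$, with the $2$ coming from the within-bucket ratio of radii and the $c_1$ from \eqref{vit-2}.
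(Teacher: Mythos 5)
Your proof is correct and follows the standard dyadic Vitali selection scheme, which is exactly what the paper invokes: the paper itself does not re-prove this lemma but cites \cite[Lemma 5.4]{GiaSch16} and ultimately \cite[Chapter~1, Lemmas~1 and~2]{stein:1993}, and the argument there is precisely the one you give. In particular your bookkeeping on the enlargement factor $2c_1$ --- lifting $r_x$ and $r_y$ to the common radius $\rho=\max(r_x,r_y)\leq 2r_y$ via \eqref{Eq:nested} before applying the engulfment property \eqref{vit-2}, and then iterating the doubling bound \eqref{vit-1} a bounded number of times to control $\abs{\tilde U_i}$ --- is exactly where the hypotheses are used, and you have done it correctly.
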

Notice that the family of cylinders built in Lemma~\ref{lem:scal} satisfies \eqref{Eq:nested} and \eqref{vit-1}. In the next lemma we show that for a family of these cylinders also \eqref{vit-2} is satisfied and so the constructed family of intrinsic cylinders can actually provide a suitable Vitali-type covering; \eqref{vit-2} essentially replaces the necessity to work in a metric space, which is most commonly assumed in order to apply Vitali's covering. In some sense, the construction of Lemma~\ref{lem:scal} provides a sort of intrinsic metric with respect to any integrable function.
\begin{lemma}\label{lem:scal?}
Let $p\in (\frac{2n}{n+2},2)$, $Q_{2S,2R}\subset E_T$, $f\in L^1(Q_{Q_{2S,2R}})$, and $\hat b\in (0,1)$. For every {$z\equiv(t,x)\in Q_{S,R}$} and $0<s\leq S$, there exist $r(s,z)$, $\lambda_{s,z}$,  and a sub-intrinsic cylinder $Q_{s,r(s,z)}(z)=(t-s,t+s)\times B_{r(s,z)}(x)$, such that all properties of Lemma~\ref{lem:scal} hold. In particular, the cylinders $Q_{s,r(s,z)}(z)$ form  a nested family of sub-intrinsic cylinders with respect to the sizes
\[
\lambda_{s,z}^{2-p}=\frac{s}{(r(s,z))^2}=\theta_{s,z}^\frac{1-m}{m+1},\text{ for }m+1=p.
\] 
Moreover, if we assume that the cylinder $Q_{2S,2R}$ is sub-intrinsic, namely that for some $K\geq 1$
\[
\bigg(\dashiint_{Q_{2S,2R}}\abs{f}\,dxdt\bigg)^\frac{2-p}{p}\leq K\frac{S}{R^2}=:K\lambda_o^{2-p}\, (=:K\theta_o),
\]
then 
\begin{enumerate}
\item \label{scal?:1}
for all $z\equiv(t,x)\in Q_{S,R}$, we have 
\[
\lambda_o^{2-p}\leq \lambda_{S,z}^{2-p}\leq cK^{2p\check{a}}\lambda_o^{2-p}\ \ 
\text{ or }\ \ \theta_o\leq \theta_{S,z}\leq cK^{2p\check{a}}\theta_o
\]
with $\check a=\frac1{2p+n(p-2)}$;
\item \label{scal?:2}
there is a constant $c_1>1$, depending only on $n$, $m$, $\hat b$, such that  if 
$Q_{s,r(s,z)}(z)\cap Q_{s,r(s,y)}(y)\neq\emptyset$, then
\[
Q_{s,r(s,z)}(z)\subset Q_{c_1s,r(c_1s,y)}(y)\ \ \text{ and }\ \  Q_{s,r(s,y)}(y)\subset Q_{c_1s,r(c_1s,z)}(z),
\]
for each $s\leq\frac S{c_o}$, with $c_o=cK^{2p\hat{a}}$.
\end{enumerate}
\end{lemma}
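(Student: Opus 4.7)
For each $z=(t,x)\in Q_{S,R}$ one has $Q_{S,R}(z)\subset Q_{2S,2R}$, so the plan is to apply Lemma~\ref{lem:scal} with base cylinder $Q_{S,R}(z)$. This produces, for every such $z$, the family $\{Q_{s,r(s,z)}(z)\}_{s\in(0,S]}$ together with sizes $\lambda_{s,z}$ and $\theta_{s,z}$; properties \eqref{scal:-1}--\eqref{scal:7} transfer verbatim point by point.

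For item~\eqref{scal?:1} the lower bound $\lambda_o^{2-p}\leq \lambda_{S,z}^{2-p}$ is immediate from $r(S,z)\leq R$. For the upper bound I would test the defining inequality of $\tilde r(S,z)$ at $\rho=cRK^{-p\check a}$. Since $Q_{S,\rho}(z)\subset Q_{2S,2R}$ whenever $\rho\leq R$, the sub-intrinsicity assumption yields
\[
\iint_{Q_{S,\rho}(z)}\abs{f}\,dxdt \;\leq\; \iint_{Q_{2S,2R}}\abs{f}\,dxdt \;\leq\; c_n\, SR^n K^{\frac{p}{2-p}}\lambda_o^p.
\]
Using $\lambda_o^{2-p}=S/R^2$ together with the identity $n(2-p)-2p+b_0=0$ where $b_0=(n+2)p-2n=1/\check a$, the powers of $R$ cancel and the defining inequality collapses to $cK^p\rho^{b_0}\leq R^{b_0}$. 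Since $r(S,z)=\tilde r(S,z)$ at the largest scale (the minimum in the definition of $r$ is trivial at $a=S$), this gives $r(S,z)\geq cRK^{-p\check a}$, whence $\lambda_{S,z}^{2-p}\leq cK^{2p\check a}\lambda_o^{2-p}$.

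For item~\eqref{scal?:2}, the inclusion $Q_{s,r(s,z)}(z)\subset Q_{c_1 s, r(c_1 s, y)}(y)$ splits into a time condition $\abs{t_z-t_y}+s\leq c_1 s$, automatic for $c_1\geq 3$ since intersection forces $\abs{t_z-t_y}<2s$, and a spatial condition $r(c_1 s, y)\geq\abs{x_z-x_y}+r(s,z)$, which using $\abs{x_z-x_y}<r(s,z)+r(s,y)$ is implied by $r(c_1 s, y)\geq 2r(s,z)+r(s,y)$. Property \eqref{scal:0} supplies $r(c_1 s, y)\geq c_1^{\hat b}r(s,y)$, so once the radius comparability $r(s,z)\leq M\,r(s,y)$ is established it suffices to set $c_1=(2M+1)^{1/\hat b}$; the symmetric inclusion follows by swapping $z$ and $y$. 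The assumption $s\leq S/c_o$ ensures $c_1 s\leq S$, so that $r(c_1 s,\cdot)$ remains in the constructed family.

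The radius comparability, with $M$ depending only on $n,m,\hat b$, is the main obstacle. My plan is a case analysis on whether the chain $a\mapsto r(a,\cdot)$ at each of the two points encounters an intrinsic cylinder within $[s, c_o s]$. When an intrinsic scale $s^*$ exists, the identification $\bigl(\dashiint_{Q_{s^*}(\cdot)}\abs{f}\bigr)^{(2-p)/p}\sim \lambda_{s^*,\cdot}^{2-p}$, combined with enclosing $Q_s(z)$ and $Q_s(y)$ in a common enlargement via \eqref{scal:7}, equates the two intrinsic sizes up to $K$-independent constants, and the scaling \eqref{scal:3} then propagates the comparison down to scale~$s$. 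When no intrinsic scale exists in $[s, c_o s]$, property \eqref{scal:6} forces the pure scaling $\lambda_a^{2-p}=(a/\sigma)^\beta\lambda_\sigma^{2-p}$ throughout the window; the threshold $c_o=cK^{2p\hat a}$ is calibrated precisely so that the $K$-dependence coming from item~\eqref{scal?:1} is exactly absorbed by this scaling, yielding a $K$-independent comparability at scale~$s$.
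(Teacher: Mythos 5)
Your treatment of item~\eqref{scal?:1} matches the paper essentially verbatim: the lower bound is $r(S,z)\leq R$, and the upper bound is obtained by testing the sup defining $\tilde r(S,z)$ against $\rho\sim R\,K^{-p\check a}$ and cancelling powers of $R$ via $b_0=2p+n(p-2)=1/\check a$. That part is fine.

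For item~\eqref{scal?:2} your reduction to a ``radius comparability'' $r(s,z)\leq M\,r(s,y)$ via the triangle inequality and property~\eqref{scal:0} is also correct, and is what the paper does (with $c_1=3$ giving one inclusion for free after the normalization $r(s,z)\geq r(s,y)$). The gap is in how you propose to establish the comparability itself. Your case split is on whether the chains $a\mapsto r(a,\cdot)$ hit an intrinsic scale inside the window $[s,c_o s]$; the paper's is structurally different and, I think, necessary. The paper first dispatches the easy case $r(s,y)\geq\gamma r(s,z)$ (with $\gamma\sim K^{-2p\check a}$) outright. In the remaining case it uses $r(s,y)<\gamma r(s,z)\leq (s/S)^{\hat b}\gamma R\leq (s/S)^{\hat b}R(S,y)$ — where the last step needs the lower bound on $R(S,y)$ from item~\eqref{scal?:1} — to conclude via \eqref{scal:5} that there \emph{is} an intrinsic scale $\sigma\in[s,S)$ for $y$ with $r(s,y)=(s/\sigma)^{\hat b}\tilde r(\sigma,y)$; this $\sigma$ can sit anywhere in $[s,S)$, not just in $[s,c_os]$. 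The subsequent split is on $\sigma\geq S/3$ (compare against the outer sub-intrinsic cylinder $Q_{2S,2R}$) versus $\sigma<S/3$ (compare the intrinsic $Q(\sigma,y)$ against the sub-intrinsic $Q(3\sigma,z)$ after enclosure by \eqref{scal:7}).

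Concretely, your ``no intrinsic scale in $[s,c_os]$'' branch does not close. Property~\eqref{scal:6} gives $\lambda_{s,\cdot}^{2-p}=(s/(c_os))^{\beta}\lambda_{c_os,\cdot}^{2-p}$ in that window, which merely transports the ratio $\lambda_{s,z}/\lambda_{s,y}$ up to scale $c_os$ without controlling it; you are still left to compare $\lambda_{c_os,z}$ with $\lambda_{c_os,y}$ at points where you have no anchor unless you push further up to $S$ — and then you need precisely the paper's ``intrinsic scale somewhere in $[s,S)$'' mechanism. The claim that $c_o=cK^{2p\hat a}$ is ``calibrated to absorb the $K$-dependence from item~\eqref{scal?:1}'' is not supported: in the paper $c_o$ is sized simply so that the enlarged scale $c_1 s$ remains $\leq S$ for every $s\leq S/c_o$, while the $K$-dependence actually propagates into the constants $\gamma$ and $\tilde c_1$. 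You also leave the mixed case (an intrinsic scale for one of $z,y$ but not the other) unaddressed, which your split does not make symmetric, whereas the paper's ``WLOG $r(s,z)\geq r(s,y)$, then either the two radii are already comparable or \eqref{scal:5} fires for $y$'' avoids the issue entirely.
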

\begin{proof}
For $z=(t,x)\in Q_{S,R}$, we will construct proper sub-intrinsic cylinders.

To prove the estimate on $\lambda_{S,z}^{2-p}=:\theta_{S,z}$, we have to fix the initial cube $Q_{S,R(S,z)}(z)$, required by Lemma~\ref{lem:scal}. We do this, by defining $R(S,z)$ as the maximum in $(0,R]$ such that
\[
\bigg(\int_{t-S/2}^{t+S/2}
  \int_{B_{R(S,z)}(x)}\abs{f}^{p} \,dxdt\bigg)^{2-p}R(S,z)^{2p}\abs{B_{R(S,z)}}^{p-2}\leq S^2.
\]
The above construction implies
\[
\bigg(\dashiint_{Q_{S,R(S,z)}(z)}\abs{f}\,dxdt\bigg)^\frac{2-p}{p}\leq\frac{S}{R(S,z)^2}=:\lambda_{S,z}^{2-p}. 
\]
Since
\begin{align*}
\bigg(\int_{t-S/2}^{t+S/2}
  \int_{B_{R}(x)}\abs{f} \,dxdt\bigg)^{2-p}R^{2p}\abs{B_R}^{p-2}&\leq \bigg(\int_{-2S}^{2S}
  \int_{B_{2R}}\abs{f}\,dxdt\bigg)^{2-p}R^{2p}\abs{B_R}^{p-2}\\ 
&\leq  cK^{p} S^2
\end{align*}
and $K\geq 1$, we find that indeed $R(S,z)\leq R$ and that $\frac{R^{2p}\abs{B_R}^{p-2}}{cK^p}\leq {R(S,z)^{2p}\abs{B_{R(S,z)}}^{p-2}}$, where $c$ depends only on $n$ and $p$. Hence, 
\[
\frac{R}{cK^\frac{p}{2p+n(p-2)}}=c^{-1}K^{-p\check{a}}R \leq R(S,z)\leq R,
\]
which implies the last assertion
\[
\lambda_o^{2-p}\leq \lambda_{S,z}^{2-p}\leq c^2 K^{2p\check{a}}\lambda_o^{2-p}.
\]
This concludes the proof of \eqref{scal?:1}.  

Let us now come to \eqref{scal?:2}. We let $\gamma=c^{-2}K^{-2\check{a}p}$ (notice that $\gamma\in(0,1)$), and
\[
c_o=\max\Bigset{\Big(\frac3{\gamma}\Big)^{1/{\hat b}},\frac{3}{\gamma}};
\]
since $\hat{b}\leq 1$, we have $c_o=\frac{3}{\gamma}$. Take $s\leq \frac{S}{c_o}$; 
without loss of generality, we may assume that $r(s,z)\geq r(s,y)$. 
 Now \eqref{scal:7} of Lemma~\ref{lem:scal} implies that
$$Q_{s,r(s,y)}({y})\subset Q_{3s, 3r(s,z)}(z)\subset Q_{3s,r(3 s,z)}(z),$$
and the second inclusion of \eqref{scal?:2} is satisfied with $c_1=3$.

If $r(s,y)\geq \gamma r(s,z)$, we similarly find 
\[
Q_{s,r(s,z)}({z})\subset Q_{3s, \frac{3r(s,y)}{\gamma}}(y)\subset Q_{\frac{3s}{\gamma},r(\frac{3s}\gamma,y)}(y),
\]
which implies the wanted inclusion with $c_1=\frac3\gamma$. Hence, we are left with the case, where 
\[
r(s,y)< \gamma r(s,z) \leq \Big(\frac{s}{S}\Big)^{\hat{b}}\gamma R\leq \Big(\frac{s}{S}\Big)^{\hat{b}} R(S,y).
\]
In this case, by \eqref{scal:5} and \eqref{scal:6} of Lemma~\ref{lem:scal}, there exists a $\sigma\in[s,S)$ such that $r(s,y)=\Big(\frac{s}{\sigma}\Big)^{\hat b}\tilde{r}(\sigma,y)$ and $Q_{\sigma, r(\sigma,y)}(y)$ is intrinsic. Moreover, since $r(s,z)\leq \Big(\frac{s}{\sigma}\Big)^{\hat b} r(\sigma,z)$ by \eqref{scal:0} of Lemma~\ref{lem:scal}, we have that $r(\sigma,y)=\tilde{r}(\sigma,y)\leq r(\sigma,z)$. 
Now we let $y=(t,x)$ and $z=(t_1,x_1)$ and estimate.

{If $\sigma\in[\frac{S}{3} ,S)$, using that $Q_{\sigma, r(\sigma,y)}(y)$ is intrinsic implies
\begin{align*}
\frac{S^2}{9}\leq \sigma^2 &=\bigg(\int_{t-\sigma/2}^{t+\sigma/2}
  \int_{B_{\tilde{r}(\sigma ,y)}(x)}\abs{f} \,d\xi d\tau\bigg)^{2-p}\abs{B_{\tilde{r}(\sigma,y)}}^{p-2}(\tilde{r}(\sigma,y))^{2p}
	\\
  &\leq \bigg(\iint_{Q_{2R,2S}}\abs{f}dxdt\bigg)^{2-p}\abs{B_{\tilde{r}(\sigma,y)}}^{p-2}(\tilde{r}(\sigma,y))^{2p}
	\\
	  &\leq \bigg(\iint_{Q_{2R,2S}}\abs{f}dxdt\bigg)^{2-p}\abs{B_{r(\sigma,z)}}^{p-2}(r(\sigma,z))^{2p}
	\\
  &\leq \bigg(\int_{Q_{2R,2S}}\abs{f}dxdt\bigg)^{2-p}\abs{B_{R}}^{p-2}R^{2p}
	\\
  &\leq cK^p S^2.
\end{align*}

Since the first and the last member of the inequality are similar, it means that actually $r(s,z)\sim\tilde{r}(\sigma ,y)=r(\sigma ,y)$. More explicitly,
\[
\abs{B_{r(\sigma,z)}}^{p-2}(r(\sigma,z))^{2p}\leq c K^p \abs{B_{\tilde{r}(\sigma,y)}}^{p-2}(\tilde{r}(\sigma,y))^{2p}
\]
and so $r(\sigma,z)\leq {\tilde c_1}r(\sigma,y)$, for ${\tilde c_1}= [cK^p]^{\frac{1}{2p-(2-p)n}}$; moreover,
\[
r(s,z)\leq \Big(\frac{s}{\sigma}\Big)^{\hat b}r(\sigma,z)\leq {\tilde c_1}\Big(\frac{s}{\sigma}\Big)^{\hat b}\tilde{r}(\sigma,y)={\tilde c_1}r(s,y).
\]
On the other hand, if $\sigma\in (s,\frac{S}{3})$, since the cylinders are nested and due to \eqref{scal:7} of Lemma~\ref{lem:scal}, we have that $Q_{\sigma, \tilde{r}(\sigma,y)}\subset Q_{3\sigma, 3\tilde{r}(\sigma,y)}\subset Q_{3\sigma, \tilde{r}(3\sigma,y)}$. Hence, we find
\begin{align*}
\sigma^2&=\bigg(\int_{t-\sigma/2}^{t+\sigma/2}
  \int_{B_{\tilde{r}(\sigma,y)}(x)}\abs{f} \,d\xi d\tau\bigg)^{2-p}\abs{B_{\tilde{r}(\sigma,y)}}^{p-2}(\tilde{r}(\sigma,y))^{2p}
  \\
  &\leq \bigg(\int_{t-\sigma/2}^{t+\sigma/2}\int_{B_{\tilde{r}(\sigma,y)}(x)}\abs{f} \,d\xi d\tau\bigg)^{2-p}\abs{B_{r(\sigma,z)}}^{p-2}(r(\sigma,z))^{2p}
\\
&\leq   
  \bigg(\int_{t_1-\frac{3\sigma}2}^{t_1+\frac{3\sigma}2}
  \int_{B_{3r(\sigma,z)}(x_1)}\abs{f} \,d\xi d\tau\bigg)^{2-p}\abs{B_{r(\sigma,z)}}^{p-2}(r(\sigma,z))^{2p}\\
  &\leq \bigg(\int_{t_1-\frac{3\sigma}2}^{t_1+\frac{3\sigma}2}
  \int_{B_{r(3\sigma,z)}(x_1)}\abs{f} \,d\xi d\tau\bigg)^{2-p}\abs{B_{r(3\sigma,z)}}^{p-2}(r(3\sigma,z))^{2p}\\
	&\leq 9\sigma^2,
\end{align*}
where the last inequality follows by the sub-intrinsic construction of Lemma~\ref{lem:scal}. As before,  
\[
r(s,z)\leq \Big(\frac{s}{\sigma}\Big)^{\hat b}r(\sigma,z)\leq {\tilde c_2}\Big(\frac{s}{\sigma}\Big)^{\hat b}\tilde{r}(\sigma,y)={\tilde c_2}r(s,y)
\]
for $\tilde{c}_2=9^\frac{1}{2p-n(2-p)}$.

Therefore, for $s\in (0,{\frac S{c_o}}]$, we find that 
$$Q_{s,r(s,z)}(z)\subset Q_{c_1 s, c_1 r(s,y)}(y) {\,\subset Q_{c_1s, r(c_1r,y)}(y)},$$ 
{with $c_1=\max\set{3,\frac3\gamma,\tilde{c}_1,\tilde{c}_2}$}, which finishes the proof.
 }
\end{proof}

\section{Energy Estimates in General Cylinders}
The main result of this section is Lemma~\ref{lem:osc-energy}, which concerns proper energy estimates.

\begin{lemma}\label{lem:osc-energy}
Let $u \geq 0$ be a local, weak solution to \eqref{PME}-\eqref{PMS-eq:structure} in $E_T$ with $\frac{(n-2)_+}{n+2}< m < 1$. Fix a point $z_{\origin} \in E_{T}$, and, for $\theta > 0$, suppose 
\[
Q_{\theta(2\rho)^2,2\rho}(z_{\origin}) = Q_{\theta(2\rho)^2,2\rho}(x_{\origin}, t_{\origin})\subset E_{T}.
\]
Then there exists a constant $\gamma = \gamma(\data) > 1$ such that, for every $c \geq 0$, we have the \emph{energy estimates}
\begin{equation}\label{PMS-eq: energy}
\begin{aligned}
\esssup_{t \in \Lambda_{\theta\rho^2}(t_{\origin})} &\int_{B_{\rho}(x_{\origin})} \abs{u^{m}-c^{m}}^\frac{m+1}{m}\,dx + \esssup_{t \in \Lambda_{\theta\rho^2}(t_{\origin})}\int_{B_{\rho}(x_{\origin})} \abs{u^{m}-c^{m}}\abs{u-c}\,dx
\\
&+ \iint_{Q_{\theta \rho^2,\rho}(z_{\origin})}  \abs{Du^{m}}^{2}\,dxdt \\
&\quad \leq \frac{\gamma}{\theta\rho^2} \iint_{Q_{\theta(2\rho)^2,2\rho}(z_{\origin})} \abs{u^{m}-c^{m}} \abs{u-c}\,dxdt \\
&\quad \quad + \frac{\gamma}{\rho^{2}} \iint_{Q_{\theta(2\rho)^2,2\rho}(z_{\origin})}\abs{u^{m}-c^{m}}^2\,dxdt\\ 
&\quad\quad + \gamma \iint_{Q_{\theta(2\rho)^2,2\rho}(z_{\origin})}\rho^2|f|^2\,dxdt,
\end{aligned}
\end{equation}
and
\begin{equation}\label{PMS-eq: energy modified}
\begin{aligned}
\esssup_{t \in \Lambda_{\theta\rho^2}(t_{\origin})} & \int_{B_{\rho}(x_{\origin})} \abs{u^{m}-c^{m}}^\frac{m+1}{m}\,dx  + \iint_{Q_{\theta\rho^2,\rho}(z_{\origin})} \abs{Du^{m}}^{2}\,dxdt\\
&\leq \frac{\gamma}{\theta\rho^2} \iint_{Q_{\theta(2\rho)^2,2\rho}(z_{\origin})}  \abs{u-c}^{m + 1}\,dxdt + \frac{\gamma}{\rho^{2}} \iint_{Q_{\theta(2\rho)^2,2\rho}(z_{\origin})}\abs{u-c}^{2m}\,dxdt\\ 
&\quad\quad + \gamma \iint_{Q_{\theta(2\rho)^2,2\rho}(z_{\origin})}\rho^2|f|^2\,dxdt.
\end{aligned}
\end{equation}
\end{lemma}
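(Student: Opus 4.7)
\emph{Strategy.} The plan is to establish \eqref{PMS-eq: energy} by a Caccioppoli-type computation, testing the equation against $(u^m-c^m)\eta^2\zeta$ via the time-mollification in \eqref{PMS-eq: weak sol mollified}, and then to derive \eqref{PMS-eq: energy modified} as a purely algebraic consequence of \eqref{PMS-eq: energy} via the subadditivity inequality $|u^m-c^m|\le|u-c|^m$, valid for $u,c\ge0$ and $0<m<1$ since $x\mapsto x^m$ is concave on $[0,\infty)$ and vanishes at $0$.

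\emph{Set-up and the time term.} I fix $\eta\in C_0^\infty(B_{2\rho}(x_\origin))$ with $\eta\equiv1$ on $B_\rho(x_\origin)$ and $|D\eta|\le c/\rho$, and, for each $\tau\in\Lambda_{\theta\rho^2}(t_\origin)$, a Lipschitz time cutoff $\zeta=\zeta_\delta$ that rises linearly from $0$ to $1$ on $[t_\origin-\theta(2\rho)^2,\,t_\origin-\theta\rho^2]$, is $1$ on $[t_\origin-\theta\rho^2,\tau]$, and drops linearly to $0$ on $[\tau,\tau+\delta]$, with $\delta\to0$ at the end; thus $|\zeta'|\le c/(\theta\rho^2)$ on the rise and $\int\zeta'\,dt=0$. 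I insert $\phi=\bigl((u_\eps)^m-c^m\bigr)\eta^2\zeta$ in \eqref{PMS-eq: weak sol mollified}, use the identity $u_\eps(u_\eps)^{m-1}(u_\eps)_t=\tfrac{1}{m+1}\partial_t(u_\eps)^{m+1}$, integrate by parts in time, and pass to the limit first in $\eps$ and then in $\delta$. A direct algebraic manipulation based on
\[
\mathfrak{g}(u,c):=\int_c^u(y^m-c^m)\,dy=\tfrac{1}{m+1}u^{m+1}-c^m u+\tfrac{m}{m+1}c^{m+1}
\]
yields that the time term equals $\int_{B_{2\rho}}\mathfrak{g}(u(\cdot,\tau),c)\eta^2\,dx\,-\,\iint_{\mathrm{rise}}\mathfrak{g}(u,c)\eta^2\zeta'\,dxdt$; the constant $\tfrac{m}{m+1}c^{m+1}$ appearing in $\mathfrak{g}$ contributes nothing since $\int\zeta'\,dt=0$.

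\emph{Diffusion, forcing, and the two $\esssup$.} By \eqref{PMS-eq: bound integral +}--\eqref{PMS-eq: bound integral -}, $\mathfrak{g}(u,c)\sim|u-c|\,|u^m-c^m|$, so, after moving the rise contribution to the right and taking $\esssup$ over $\tau$, the boundary term produces the second $\esssup$ on the LHS of \eqref{PMS-eq: energy} and the rise piece contributes the first RHS term (via $|\zeta'|\le c/(\theta\rho^2)$). The divergence term gives $\iint\A\cdot Du^m\,\eta^2\zeta\ge\nu\iint|Du^m|^2\eta^2\zeta$ by \eqref{PMS-eq:structure}, plus a cross term $2\iint\A\cdot D\eta\,(u^m-c^m)\eta\zeta$ which Young's inequality absorbs into $\tfrac{\nu}{2}\iint|Du^m|^2\eta^2\zeta+c\rho^{-2}\iint|u^m-c^m|^2\zeta\,dxdt$. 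The source is estimated by $c\rho^{-2}\iint|u^m-c^m|^2+c\rho^2\iint|f|^2$, completing the RHS. Finally, the first $\esssup$ (with integrand $|u^m-c^m|^{(m+1)/m}$) is controlled by the second via the subadditivity of Step 1: $|u^m-c^m|^{(m+1)/m}=|u^m-c^m|^{1/m}\cdot|u^m-c^m|\le|u-c|\,|u^m-c^m|$.

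\emph{Modified estimate and main obstacle.} The bound \eqref{PMS-eq: energy modified} follows immediately from \eqref{PMS-eq: energy} by once more invoking $|u^m-c^m|\le|u-c|^m$: its RHS dominates the RHS of \eqref{PMS-eq: energy}, while its LHS is simply the first $\esssup$ and the gradient term of \eqref{PMS-eq: energy}. The main technical point is the rigorous justification of the time-derivative manipulation through the mollifier, in particular the passage $\eps\to0$ in the nonlinear time term; this is a standard argument relying on the regularity $u\in C^0_{\loc}(0,T;L^{m+1}_{\loc}(E))$ from \eqref{PMS-eq: reg weak sol} and on the pointwise bound $\mathfrak{g}(u,c)\le|u-c|^{m+1}$, which ensures that the boundary evaluation $\int_{B_{2\rho}}\mathfrak{g}(u(\cdot,\tau),c)\eta^2\,dx$ is well-defined for a.e.\ $\tau$.
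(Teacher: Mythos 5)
Your proof is correct and follows essentially the same route as the paper's: test with $(u_\eps^m-c^m)\eta^2$ (time-localized), use the primitive $\int_c^u(y^m-c^m)\,dy$ together with the elementary bounds \eqref{PMS-eq: bound integral +}--\eqref{PMS-eq: bound integral -} and $|u^m-c^m|\le|u-c|^m$ to produce both $\esssup$ terms, absorb the cross term by Young, and deduce \eqref{PMS-eq: energy modified} from \eqref{PMS-eq: energy} by another application of the same subadditivity. The only (inessential) difference is presentational: you treat the two regions $[u\ge c]$ and $[u<c]$ in a single computation via the sign-insensitive primitive $\mathfrak g$, while the paper decomposes the test function into $\pm(u_\eps^m-c^m)_\pm$ and runs the estimate separately on each set before adding.
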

\begin{remark}
{\normalfont Since we are taking $f\in L^\infty_{\loc}(E_T)$, the last term on the right-hand side both of \eqref{PMS-eq: energy} and \eqref{PMS-eq: energy modified} could be further estimated with the $L^\infty$-norm;  at this stage this general statement suffices.}
\end{remark}
\begin{proof}
Estimate \eqref{PMS-eq: energy modified} can be easily deduced from \eqref{PMS-eq: energy}, taking into account that for $a, b \geq 0$, and for $0< m < 1$ we have
\begin{equation}\label{PMS-eq: useful inequality}
\abs{a^{m} - b^{m}} \leq \abs{a - b}^{m}.
\end{equation}

After a change of coordinates we can assume $ z_{\origin} = (0,0)$.
Choose a point $t_{1} \in \Lambda_{\theta(2\rho)^2}$ and in the weak formulation \eqref{PMS-eq: weak sol mollified} take the test function
\begin{equation*}
\phi \deq  \pm \pa{u_{\eps}^{m} -c^{m}}_{\pm}\,\eta^{2} \psi_{\eps}^{h}, %
\end{equation*}
where:
\begin{itemize}
\item $\eta(x,t) \in C^{\infty}_{0}(\R^{N+1})$ is a cut-off function such that
\begin{gather*}
\supp \eta \subset Q_{\theta(2\rho)^2,2\rho}, \qquad 0 \leq \eta \leq 1, \qquad \eta= 1\  \text{ on $Q_{\theta\rho^2,\rho}$},\\
\rho \abs{D\eta} + \theta \rho^2 \abs{\eta_{t}} \leq 1000.
\end{gather*}
\item $\psi^{h}(t)$ is a piecewise linear approximation of the characteristic function, such that
\[
\psi^{h}(t) = 
\begin{cases}
\ 1 &\quad  t \in [- \theta (2\rho)^2 + h, t_{1} -h]\\
\ 0 &\quad  t \in \left( - \theta (2\rho)^2, - \theta (2\rho)^2 + \frac{h}{10}\right] \cup \left[t_{1} - \frac{h}{10}, t_{1}\right)
\end{cases}
\]
and
\[
\left\lvert \frac{d\psi^{h}}{dt}\right\rvert \leq \frac{10}{9h},
\]
\item $u_{\eps}$ and $\psi^{h}_{\eps}$ are the mollification of $u$ and $\psi^{h}$ in time direction for $\eps \leq \frac{h}{20}$. 
\end{itemize}

\medskip

\noindent\textbf{Estimates on the set $[u \geq c]$}: we integrate by parts the first term on the left-hand side of \eqref{PMS-eq: weak sol mollified}, and we have
\begin{align}\label{PMS-eq: integration parts +}
-&\int_{-\theta(2\rho)^2}^{t_{1}}  \int_{B_{2\rho}\cap [u_{\eps} \geq c]} u_{\eps}\phi_{t}\,dxdt \notag\\
&= \int_{-\theta(2\rho)^2}^{t_{1}} \int_{B_{2\rho}\cap [u_{\eps} \geq c]}  \tder{u_{\eps}}  \pa{u_{\eps}^{m} - c^{m}}\, \eta^{2}\psi_{\eps}^{h}\,dxdt \notag\\
&= \int_{-\theta(2\rho)^2}^{t_{1}} \int_{B_{2\rho}\cap [u_{\eps} \geq c]} \tder{}\pa{\int_{a}^{u_{\eps}} \pa{y^{m} -c^{m}}\,dy}  \eta^{2}\psi_{\eps}^{h}\,dxdt.
\end{align}
Integrating by parts \eqref{PMS-eq: integration parts +} and taking the limits, first  as $\eps \to 0$ and then as $h \to 0$, we end up with
\begin{align}\label{PMS-eq: limits +}
-&\int_{-\theta(2\rho)^2}^{t_{1}}  \int_{B_{2\rho}\cap [u_{\eps} \geq c]}  u_{\eps}\phi_{t}\,dxdt\notag\\
&\longrightarrow \int_{B_{2\rho} \cap [u \geq c]}\pa{\int_{c}^{u} \pa{y^{m} -c^{m}}\,dy} \eta^{2}(x,t_{1})\,dx \notag\\
&\quad -2\int_{-\theta(2\rho)^2}^{t_{1}} \int_{B_{2\rho}\cap [u \geq c]} \pa{\int_{c}^{u} \pa{y^{m} -c^{m}}\,dy}\eta \eta_{t}\,dxdt,
\end{align}
since by definition $\eta = 0$ at $t = -\theta(2\rho)^2$. Notice that the term with the time derivative of $\psi^{h}_{\eps}$ does not appear: this follows from the bound on $\frac{d\psi^{h}}{dt}$ and the fact that we are taking the limits \emph{first} as $\eps \to 0$ and \emph{then} as $h \to 0$. 

We estimate the first term on the right-hand side of \eqref{PMS-eq: limits +}. We use Jensen's inequality, \eqref{PMS-eq: bound integral +}  and \eqref{PMS-eq: useful inequality} to obtain
\begin{align*}
&\int_{B_{2\rho} \cap [u \geq c]} \pa{ \int_{c}^{u} \pa{y^{m} -c^{m}}\,dy }\eta^{2}(x,t_{1})\,dx\geq {\frac{1}{2}} \int_{B_{2\rho}\cap [u \geq c]} \pa{u -c} \pa{u^{m} -c^{m}}\eta^{2}(x,t_{1})\,dx\\
&\ge{\frac{1}{4}} \int_{B_{2\rho}\cap [u \geq c]} \pa{u -c} \pa{u^{m} -c^{m}}\eta^{2}(x,t_{1})\,dx+
 {\frac{m}{4}} \int_{B_{2\rho}\cap [u \geq c]}  \pa{u^{m} -c^{m}}^{\frac{m + 1}{m}}\eta^{2}(x,t_{1})\,dx\\
&={\frac{1}{4}} \int_{B_{2\rho}\cap [u \geq c]} \abs{u -c} \abs{u^{m} -c^{m}}\eta^{2}(x,t_{1})\,dx
+{\frac{m}{4}} \int_{B_{2\rho} \cap [u \geq c]} \abs{u^{m} -c^{m}}^{\frac{m + 1}{m}}\eta^{2}(x,t_{1})\,dx.
\end{align*}

The second term on the right-hand side of \eqref{PMS-eq: limits +} is estimated by
\begin{multline*}
\int_{-\theta(2\rho)^2}^{t_{1}} \int_{B_{2\rho}\cap [u \geq c]} \pa{\int_{c}^{u}  \pa{y^{m} -c^{m}}\,dy }\,\eta\eta_{t}\,dxdt\\
\leq \int_{-\theta(2\rho)^2}^{t_{1}} \int_{B_{2\rho}\cap [u \geq c]}\abs{u-c} \abs{u^{m}-c^{m}}\eta \abs{\eta_{t}}\,dxdt.
\end{multline*}

Next we consider the term with the gradient in \eqref{PMS-eq: weak sol mollified}. We differentiate $\phi$ and take the limits, first as $\eps \to 0$ and then as $h \to 0$; collecting all the estimates yields
\begin{align*}
& \int_{B_{2\rho}\cap [u \geq c]} \abs{u -c} \abs{u^{m} -c^{m}}\eta^{2}(x,t_{1})\,dx+\int_{B_{2\rho} \cap [u \geq c]}  \abs{u^{m} -c^{m}}^{\frac{m+1}{m}}\eta^{2}(x,t_{1})\,dx\\
&\quad+ \int_{-\theta(2\rho)^2}^{t_{1}} \int_{B_{2\rho}\cap [u \geq c]} \eta^{2} \A(x,t,u,Du^m) \cdot D{\pa{u^{m} -c^{m}}}\,dxdt\\
&\leq \gamma\int_{-\theta(2\rho)^2}^{t_{1}} \int_{B_{2\rho}\cap [u \geq c]}\abs{u-c} \abs{u^{m}-c^{m}}\eta \abs{\eta_{t}}\,dxdt\\
&\quad + \gamma  \int_{-\theta(2\rho)^2}^{t_{1}} \int_{B_{2\rho}\cap [u \geq c]}  \abs{u^{m} -c^{m}} \eta \abs{ \A(x,t,u,Du^m)} \abs{ D\eta}\,dxdt\\
&\quad +  \int_{-\theta(2\rho)^2}^{t_{1}} \int_{B_{2\rho}\cap [u \geq c]} \abs{u^{m} -c^{m}} \eta^2 |f|\,dxdt.
\end{align*}

Using the lower bound on $\A$ we obtain
\begin{align*}
&\int_{-\theta(2\rho)^2}^{t_{1}}  \int_{B_{2\rho}\cap [u \geq c]}  \eta^{2}\A(x,t,u,Du^m) \cdot D{\pa{u^{m} 
-c^{m}}}\,dxdt\\
&= \int_{-\theta(2\rho)^2}^{t_{1}} \int_{B_{2\rho}\cap [u \geq c]}  \eta^{2} \A(x,t,u,Du^m) \cdot Du^{m}\,dxdt\\
& \geq \nu \int_{-\theta(2\rho)^2}^{t_{1}} \int_{B_{2\rho}\cap [u \geq c]} \eta^{2}\abs{Du^{m}}^{2}\,dxdt.
\end{align*}
The upper bound and Young's inequality imply
\begin{align*}
&\int_{-\theta(2\rho)^2}^{t_{1}} \int_{B_{2\rho}\cap [u \geq c]} \abs{u^{m} -c^{m}} \eta  \abs{\A(x,t,u,Du^m)} \abs{D\eta}\,dxdt\\
&\leq L \int_{-\theta(2\rho)^2}^{t_{1}} \int_{B_{2\rho}\cap [u \geq c]} \abs{u^{m} -c^{m}} \eta \abs{Du^{m}} \abs{D\eta}\,dxdt\\
&\leq \frac{\nu}{2} \int_{-\theta(2\rho)^2}^{t_{1}} \int_{B_{2\rho}\cap [u \geq c]} \eta^{2} \abs{Du^{m}}^{2} \,dxdt\\
&\quad + \gamma(\nu, L)\int_{-\theta(2\rho)^2}^{t_{1}} \int_{B_{2\rho}\cap [u \geq c]} \abs{u^{m} -c^{m}}^2\abs{D\eta}^{2} \,dxdt.
\end{align*}
In this way we end up with
\begin{equation}\label{PMS-eq: contr +}
\begin{aligned}
& \int_{B_{2\rho}\cap [u \geq c]} \abs{u -c} \abs{u^{m} -c^{m}}\eta^{2}(x,t_{1})\,dx\\
&+\int_{B_{2\rho} \cap [u \geq c]} \abs{u^{m} -c^{m}}^{\frac{m+1}{m}}\eta^{2}(x,t_{1})\,dx + \int_{-\theta(2\rho)^2}^{t_{1}} \int_{B_{2\rho}\cap [u \geq c]} \eta^{2} \abs{Du^{m}}^{2}\,dxdt\\
&\leq \gamma\int_{-\theta(2\rho)^2}^{t_{1}} \int_{B_{2\rho}\cap [u \geq c]}\abs{u-c} \abs{u^{m}-c^{m}}\eta \abs{\eta_{t}}\,dxdt\\
&\quad + \gamma \int_{-\theta(2\rho)^2}^{t_{1}} \int_{B_{2\rho}\cap [u \geq c]} \abs{u^{m} -c^{m}}^2\abs{D\eta}^{2} \,dxdt\\
&\quad +  \int_{-\theta(2\rho)^2}^{t_{1}} \int_{B_{2\rho}\cap [u \geq c]} \abs{u^{m} -c^{m}} \eta^2 |f|\,dxdt.
\end{aligned}
\end{equation}
If $c = 0$, the proof is finished by a straightforward application of H\"older's inequality. If $c>0$, we have to consider also the  set $[u<c]$.

\medskip

\noindent\textbf{Estimates on the set $[u < c]$}: integrating by parts the first term on the left-hand side of \eqref{PMS-eq: weak sol mollified} we have
\begin{align}\label{PMS-eq: integration parts -}
-& \int_{-\theta(2\rho)^2}^{t_{1}}  \int_{B_{2\rho}\cap [u_{\eps} < c]} u_{\eps}\phi_{t}\,dxdt \notag\\
&= \int_{-\theta(2\rho)^2}^{t_{1}} \int_{B_{2\rho}\cap [u_{\eps} < c]}  -\tder{u_{\eps}} \pa{c^{m} - u_{\eps}^{m}}\, \eta^{2}\psi_{\eps}^{h}\,dxdt  \\
&= \int_{-\theta(2\rho)^2}^{t_{1}} \int_{B_{2\rho}\cap [u_{\eps} < c]} \tder{}\pa{\int_{u_{\eps}}^{c} \pa{c^{m} - y^{m}}\,dy}  \eta^{2}\psi_{\eps}^{h}\,dxdt.\notag
\end{align}

After another integration by parts and taking the limits first as $\eps \to 0$, and then as $h \to 0$, we conclude
\begin{align}\label{PMS-eq: limits -}
-&\int_{-\theta(2\rho)^2}^{t_{1}} \int_{B_{2\rho}\cap [u_{\eps} < c]}  u_{\eps}\phi_{t}\,dxdt\notag\\
&\longrightarrow \int_{B_{2\rho} \cap [u < c]}\pa{\int_{u}^{c} \pa{c^{m} - y^{m}}\,dy} \eta^{2}(x,t_{1})\,dx \\
&\quad -2\int_{-\theta(2\rho)^2}^{t_{1}} \int_{B_{2\rho}\cap [u < c]} \pa{\int_{u}^{c} \pa{c^{m} - y^{m}}\,dy}\eta \eta_{t}\,dxdt,\notag
\end{align}
since, as before, $\eta$ vanishes at $t = -\theta(2\rho)^2$ and the term with $\frac{d\psi^{h}_{\eps}}{dt}$ goes to $0$.

We control the first term using \eqref{PMS-eq: bound integral -} and \eqref{PMS-eq: useful inequality}:
\begin{align*}
&\int_{B_{2\rho} \cap [u < c]} \pa{\int_{u}^{c} \pa{c^{m} - y^{m}}\,dy}\eta^{2}(x,t_{1})\,dx\geq {\frac{m}{2}} \int_{B_{2\rho} \cap [u < c]} \pa{c - u} \pa{c^{m} - u^{m}}\,\eta^{2}(x,t_{1})\,dx\\
&\ge {\frac{m}{4}} \int_{B_{2\rho} \cap [u < c]} \pa{c - u} \pa{c^{m} - u^{m}}\,\eta^{2}(x,t_{1})\,dx+{\frac{m}{4}} \int_{B_{2\rho} \cap [u < c]}  \pa{c^{m} -u^{m}}^{\frac{m + 1}{m}}\eta^{2}(x,t_{1})\,dx\\
&= {\frac{m}{4}} \int_{B_{2\rho} \cap [u < c]} \abs{u - c} \abs{u^{m} - c^{m}}\,\eta^{2}(x,t_{1})\,dx+{\frac{m}{4}} \int_{B_{2\rho} \cap [u < c]}  \abs{u^{m} -c^{m}}^{\frac{m + 1}{m}}\eta^{2}(x,t_{1})\,dx.
\end{align*}
 
The other calculations with the spatial gradient are similar to those on the set $[u \geq c]$, and finally give
\begin{equation}\label{PMS-eq: contr -}
\begin{aligned}
&\int_{B_{2\rho} \cap [u < c]} \abs{u - c} \abs{u^{m} - c^{m}}\,\eta^{2}(x,t_{1})\,dx+\int_{B_{2\rho} \cap [u < c]} \abs{u^{m} - c^{m}}^{\frac{m+1}{m}}\eta^{2}(x,t_{1})\,dx\\
&\quad + \int_{-\theta(2\rho)^2}^{t_{1}} \int_{B_{2\rho}\cap [u < c]} \abs{Du^{m}}^{2}\,\eta^{2}\,dxdt\\
&\leq \gamma\int_{-\theta(2\rho)^2}^{t_{1}} \int_{B_{2\rho}\cap [u < c]}\abs{u-c} \abs{u^{m}-c^{m}}\,\eta \abs{\eta_{t}}\,dxdt \\
&\quad + \gamma \int_{-\theta(2\rho)^2}^{t_{1}} \int_{B_{2\rho}\cap [u < c]} \abs{u^{m} -c^{m}}^2
\abs{D\eta}^{2} \,dxdt\\
&\quad +  \int_{-\theta(2\rho)^2}^{t_{1}} \int_{B_{2\rho}\cap [u < c]} \abs{u^{m} -c^{m}} \eta^2 |f|\,dxdt.
\end{aligned}
\end{equation}

Now if we sum the contributions \eqref{PMS-eq: contr +} and \eqref{PMS-eq: contr -}, notice that
\[
\begin{aligned}
&\int_{-\theta(2\rho)^2}^{t_{1}} \int_{B_{2\rho}\cap [u \ge c]} \abs{u^{m} -c^{m}} \eta^2 |f|\,dxdt\\
&\le\frac1{2\rho^2}\int_{-\theta(2\rho)^2}^{t_{1}} \int_{B_{2\rho}\cap [u \ge c]} \abs{u^{m} -c^{m}}^2 +\frac{\rho^2}2\int_{-\theta(2\rho)^2}^{t_{1}} \int_{B_{2\rho}\cap [u \ge c]} |f|^2\,dxdt,\\
&\int_{-\theta(2\rho)^2}^{t_{1}} \int_{B_{2\rho}\cap [u < c]} \abs{u^{m} -c^{m}} \eta^2 |f|\,dxdt\\
&\le\frac1{2\rho^2}\int_{-\theta(2\rho)^2}^{t_{1}} \int_{B_{2\rho}\cap [u < c]} \abs{u^{m} -c^{m}}^2 +\frac{\rho^2}2\int_{-\theta(2\rho)^2}^{t_{1}} \int_{B_{2\rho}\cap [u < c]} |f|^2\,dxdt,
\end{aligned}
\]
and use the property of $\eta$ and the arbitrariness of $t_{1}$, we conclude the proof.
\end{proof}
A direct consequence of \eqref{PMS-eq: energy modified} for $c=0$ is the following corollary, that is satisfied for sub-intrinsic cylinders:
\begin{corollary}[sub-intrinsic]\label{cor:deg-subint}
Let $u \geq 0$ be a local, weak solution to \eqref{PME}-\eqref{PMS-eq:structure} in $E_T$ with $\frac{(n-2)_+}{n+2}< m < 1$. Fix a point $z_{\origin} \in E_{T}$, and, for $s,\theta > 0$, suppose 
\[
Q_{s,\sqrt{s/\theta}}(z_{\origin}) = Q_{s,\sqrt{s/\theta}}(x_{\origin}, t_{\origin})\subset E_{T},
\]
and
\[
\bigg(\dashiint_{Q_{s,\sqrt{s/\theta}}} u^{m+1}\, dxdt\bigg)^\frac{1-m}{m+1}\leq K\theta,
\]
for some $K\ge1$. Then there exists a constant $\gamma_K = \gamma(\data,K) > 1$ such that 
\begin{align*}
\frac{1}{s}\esssup_{t \in \Lambda_{s/2}(t_{\origin})} &\dashint_{B_{\sqrt{s/4\theta}}(x_{\origin})} u^{m+1}\,dx
+ \dashiint_{Q_{s/2,\sqrt{s/4\theta}}(z_{\origin})}  \abs{Du^{m}}^{2}\,dxdt \\
&\quad \leq \gamma_K\left[\frac{\theta^\frac{m+1}{1-m}}{s}+{\frac s\theta}\,\dashiint_{Q_{s,\sqrt{s/\theta}}(z_{\origin})}|f|^2\,dxdt\right]\\
&\quad \leq \gamma_K\left[\frac{\theta^\frac{m+1}{1-m}}{s}+\sup_{Q_{s,\sqrt{s/\theta}}(z_{\origin})} 
\left[{\frac s\theta}\,|f|^2\right]\right].
\end{align*}
\end{corollary}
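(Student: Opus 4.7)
The plan is to specialize the energy inequality \eqref{PMS-eq: energy modified} to the case $c=0$, at which point the integrands $|u^m-c^m|^{(m+1)/m}$, $|u-c|^{m+1}$ and $|u-c|^{2m}$ collapse to $u^{m+1}$, $u^{m+1}$ and $u^{2m}$ respectively, and then to feed the sub-intrinsic hypothesis into the right-hand side. The natural choice of scales is $\rho=\sqrt{s/(4\theta)}$, with the same $\theta$, in Lemma~\ref{lem:osc-energy}, so that the outer cylinder $Q_{\theta(2\rho)^2,2\rho}(z_{\origin})$ appearing in \eqref{PMS-eq: energy modified} becomes exactly the sub-intrinsic cylinder $Q_{s,\sqrt{s/\theta}}(z_{\origin})$ of the hypothesis. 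The inner cylinder $Q_{\theta\rho^2,\rho}(z_{\origin})=Q_{s/4,\sqrt{s/4\theta}}(z_{\origin})$ is contained in $Q_{s/2,\sqrt{s/4\theta}}(z_{\origin})$, and the small mismatch in the time half-length is harmless: it only changes $\gamma_K$ by a dimensional factor coming from the ratio of volumes (one can also run the same argument with $\rho$ fixed and the $\theta$ in the energy estimate replaced by $2\theta$, restoring the inner time half-length to $s/2$ at the price of a slightly larger outer cylinder).

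With these scales the three terms on the right-hand side of \eqref{PMS-eq: energy modified} read, up to multiplicative constants, as $\tfrac{1}{s}\iint u^{m+1}\,dxdt$, $\tfrac{\theta}{s}\iint u^{2m}\,dxdt$, and $\tfrac{s}{\theta}\iint |f|^2\,dxdt$, all integrated over $Q_{s,\sqrt{s/\theta}}(z_{\origin})$. The first integral is controlled directly by the sub-intrinsic hypothesis, which is equivalent to $\dashiint_{Q_{s,\sqrt{s/\theta}}}u^{m+1}\,dxdt\leq(K\theta)^{(m+1)/(1-m)}$, producing a contribution of order $\theta^{(m+1)/(1-m)}/s$ times the volume of the outer cylinder.

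The $u^{2m}$ integral is the only place where a small additional argument is needed: since $m<1$ we have $2m<m+1$, so Jensen's inequality applied to the probability measure $|Q|^{-1}\,dxdt$ yields
\[
\dashiint_{Q_{s,\sqrt{s/\theta}}}u^{2m}\,dxdt\leq\left(\dashiint_{Q_{s,\sqrt{s/\theta}}}u^{m+1}\,dxdt\right)^{\frac{2m}{m+1}}\leq(K\theta)^{\frac{2m}{1-m}},
\]
and, combined with the prefactor $\theta/s$, this produces exactly the bound $\gamma_K\,\theta^{(m+1)/(1-m)}/s$, thanks to the algebraic identity $1+\tfrac{2m}{1-m}=\tfrac{m+1}{1-m}$. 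To finish, I would divide through by $|Q_{s/2,\sqrt{s/4\theta}}(z_{\origin})|$ to convert the integrals into mean-value integrals, absorbing the bounded volume ratio $|Q_{s,\sqrt{s/\theta}}|/|Q_{s/2,\sqrt{s/4\theta}}|$ into $\gamma_K$. The second chain of inequalities in the statement is then immediate from $\dashiint_{Q}|f|^2\,dxdt\leq\sup_Q|f|^2$. The only mildly delicate step is the Jensen reduction of $u^{2m}$ to $u^{m+1}$, which is what allows the sub-intrinsic bound on $u^{m+1}$ alone to close the estimate, and which crucially uses $m<1$.
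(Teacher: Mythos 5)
Your approach is correct and is precisely what the paper has in mind: the paper's own ``proof'' is the single sentence that the corollary is a direct consequence of \eqref{PMS-eq: energy modified} with $c=0$, and your reduction of $u^{2m}$ to $u^{m+1}$ via Jensen's inequality (using $2m<m+1$ for $m<1$ and the identity $1+\tfrac{2m}{1-m}=\tfrac{m+1}{1-m}$) together with the sub-intrinsic hypothesis is exactly the content that makes this work.

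One small correction on the handling of the time half-length mismatch ($s/4$ from Lemma~\ref{lem:osc-energy} versus $s/2$ in the corollary). Your first fix --- absorbing a ``ratio of volumes'' --- goes the wrong way: a bound on the integral over the \emph{smaller} inner cylinder $Q_{s/4,\sqrt{s/4\theta}}$ does not control the integral over the \emph{larger} $Q_{s/2,\sqrt{s/4\theta}}$, since the integrands are nonnegative and $\esssup_{\Lambda_{s/2}}\geq\esssup_{\Lambda_{s/4}}$. Your second fix (replacing $\theta$ by $2\theta$ with $\rho$ fixed) produces the outer cylinder $Q_{\theta'(2\rho)^2,2\rho}=Q_{2s,\sqrt{s/\theta}}$, which exceeds the cylinder on which both the inclusion in $E_T$ and the sub-intrinsic bound are assumed, so it is not available. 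The correct (and equally trivial) fix is to note that the proof of Lemma~\ref{lem:osc-energy} only uses $\supp\eta\subset Q_{\theta(2\rho)^2,2\rho}$, $\eta\equiv 1$ on the inner cylinder, and $\rho\abs{D\eta}+\theta\rho^2\abs{\eta_t}\leq c$; one may therefore take the cutoff to equal $1$ on the larger $Q_{2\theta\rho^2,\rho}=Q_{s/2,\sqrt{s/4\theta}}$ while still vanishing outside $Q_{s,\sqrt{s/\theta}}$: the time transition then occurs over an interval of length $2\theta\rho^2=s/2$, so $\theta\rho^2\abs{\eta_t}\leq c$ still holds (with a slightly different constant), and the energy estimate of Lemma~\ref{lem:osc-energy} carries over unchanged with the enlarged inner cylinder. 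With this adjustment the rest of your computation closes the proof exactly as you describe.
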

We will also need the following energy estimates for proper truncations of $u$. They can be seen as a corollary of Lemma~\ref{lem:osc-energy}. They have been proved in \cite[Section~B.3]{DiBGiaVes11}, to which we refer for all the details.

\begin{proposition}
Let $u\ge0$ be a local, weak subsolution to
\eqref{PME}--\eqref{PMS-eq:structure} in $E_T$ with $\frac{(n-2)_+}{n+2}< m < 1$, and consider 
the truncations 
\begin{equation*}
(u^m-k^m)_+\quad\text{ for }\quad k>0.
\end{equation*}
There exists a positive constant $\gm=\gm(m,n,\nu,L)$, 
such that for every cylinder
\[
Q_{\theta(2\rho)^2,2\rho}(z_{\origin}) = Q_{\theta(2\rho)^2,2\rho}(x_{\origin}, t_{\origin})\subset E_{T},
\]
every $k>0$, and every nonnegative, 
piecewise smooth cutoff function $\zeta$ vanishing on 
$\pl B_{2\rho} (x_o)$, 
\begin{equation}\label{Eq:B:3:1}
\begin{aligned}
\essup_{t_o-\theta(2\rho)^2<t\le t_o}&\frac1{m+1}
\int_{B_{2\rho}(x_o)}\umkmp^{\frac{m+1}m}\zeta^2(x,t)dx\\
&\quad -\int_{B_{2\rho}(x_o)}\int_k^u(s^m-k^m)ds 
\zeta^2(x,t_o-\theta(2\rho)^2)dx\\
&\quad + \frac{\nu}4\iint_{Q_{\theta(2\rho)^2,2\rho}(z_{\origin}))}|D\umkmp|^2
 \zeta^2\,dxdt\\
&\le \gm\iint_{Q_{\theta(2\rho)^2,2\rho}(z_{\origin})}u^{m+1}\chi_{[\ukp>0]}
\zeta|\zeta_t|\,dxdt\\
&\quad+\gm\iint_{Q_{\theta(2\rho)^2,2\rho}(z_{\origin})}
\umkmp^2|D\zeta|^2\,dxdt\\
&\quad+\gm\iint_{Q_{\theta(2\rho)^2,2\rho}(z_{\origin})}
\zeta^2\umkmp|f|\,dxdt.
\end{aligned}
\end{equation}
\end{proposition}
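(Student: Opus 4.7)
The strategy is to adapt the mollification/chain-rule argument of Lemma~\ref{lem:osc-energy} to a one-sided truncation of the subsolution, with the distinctive feature that the cutoff $\zeta$ is permitted to be nontrivial at the initial time $t=t_o-\theta(2\rho)^2$. Following \eqref{PMS-eq: weak super-sol}, the natural test function to insert in the time-mollified weak formulation is
\[
\phi = (u_\eps^m - k^m)_+\,\zeta^2\,\psi^h_\eps,
\]
where $\psi^h$ is a piecewise linear approximation of $\chi_{[t_o-\theta(2\rho)^2,\,t_1]}$ for some fixed $t_1\in(t_o-\theta(2\rho)^2,t_o]$, equipped with linear transition layers of width $h$ at both endpoints. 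Since $\phi\geq0$ and $u$ is a subsolution, this choice yields a one-sided inequality in the right direction, and the nontrivial boundary value of $\zeta$ at $t_o-\theta(2\rho)^2$ is harmless thanks to the initial layer of $\psi^h$.

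For the parabolic term, writing $\Phi_k(u):=\int_k^u(s^m-k^m)_+\,ds$ and applying the chain rule in time gives
\[
-\iint u_\eps \phi_t\,dxdt = \iint \pl_t \Phi_k(u_\eps)\,\zeta^2\,\psi^h_\eps\,dxdt - 2\iint \Phi_k(u_\eps)\,\zeta\,\zeta_t\,\psi^h_\eps\,dxdt.
\]
Integrating by parts in $t$ and letting first $\eps\to0$ and then $h\to0$, the derivative of $\psi^h$ concentrates at the two endpoints and produces $\int \Phi_k(u(\cdot,t_1))\zeta^2(\cdot,t_1)\,dx$ and $-\int \Phi_k(u(\cdot,t_o-\theta(2\rho)^2))\zeta^2(\cdot,t_o-\theta(2\rho)^2)\,dx$. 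A direct computation, using that $(u^m-k^m)_+^{1/m}\leq u$ by \eqref{PMS-eq: useful inequality}, shows that $\Phi_k(u)\geq \frac{1}{m+1}\umkmp^{\frac{m+1}{m}}$; taking $t_1$ arbitrary produces the $\esssup$ on the left of \eqref{Eq:B:3:1}, and the lower-endpoint contribution is exactly the second term on the left. The residual $\Phi_k(u)\,\zeta|\zeta_t|$ contribution is moved to the right-hand side and controlled by the crude bound $\Phi_k(u)\leq (u-k)_+(u^m-k^m)_+\leq u^{m+1}\chi_{[u>k]}$, yielding the first term on the right-hand side.

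For the spatial term, the identity $D(u^m-k^m)_+ = Du^m\,\chi_{[u>k]}$ combined with the lower bound in \eqref{PMS-eq:structure} yields $\iint \zeta^2\,\A\cdot D(u^m-k^m)_+\,dxdt \geq \nu\iint \zeta^2 |D\umkmp|^2\,dxdt$, while the cross term from $D(\zeta^2)=2\zeta D\zeta$ is bounded via the upper structure by $L\iint \zeta|D\zeta||D\umkmp|\,\umkmp$ and absorbed through Young's inequality, producing the $\umkmp^2|D\zeta|^2$ contribution on the right. The forcing piece gives $\iint \zeta^2\,\umkmp\,|f|$ directly, exactly as in \eqref{PMS-eq: contr +}. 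The only genuinely delicate point is the rigorous handling of the two time-boundary limits (especially the one at $t_o-\theta(2\rho)^2$, where $\zeta$ is not required to vanish); once this bookkeeping is carried out, everything else reduces to restricting the arguments of Lemma~\ref{lem:osc-energy} to the set $[u>k]$.
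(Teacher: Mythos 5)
The paper does not present its own proof of this proposition: it notes that it can be seen as a corollary of Lemma~\ref{lem:osc-energy} and refers to \cite[Section~B.3]{DiBGiaVes11} for the details. Your derivation---mollifying in time, testing the subsolution formulation with the nonnegative $\phi=(u_\eps^m-k^m)_+\zeta^2\psi^{h}_{\eps}$, using $(s^m-k^m)_+^{1/m}\leq s$ to bound the primitive $\int_k^u(s^m-k^m)_+\,ds$ from below by $\tfrac{1}{m+1}\umkmp^{(m+1)/m}$ and from above by $u^{m+1}\chi_{[\ukp>0]}$, and absorbing the cross term via \eqref{PMS-eq:structure} and Young's inequality---is precisely the standard argument the cited reference carries out, and it mirrors the proof of Lemma~\ref{lem:osc-energy} restricted to the level set $[u>k]$, so you are following essentially the same approach as the paper.
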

\section{Estimate for the Solution}
In the following, we refer to the cylinders built in Lemma~\ref{lem:scal}. In particular, if $s$ denotes the height of the cylinder and $\theta(s)$ is the corresponding scaling parameter, then $r(s)$ is given by
\begin{equation}\label{eq:radius-s}
r(s):=\sqrt{\frac s{\theta(s)}}.
\end{equation}
For simplicity, whenever possible, instead of $\theta(s)$, $r(s)$, we will write $\theta_s$, $r_s$. Moreover, $Q_s^{\theta_s}$ denotes the cylinder of height $s$, scaling parameter $\theta_s$, and related radius $r_s$ given by \eqref{eq:radius-s}; when there is no possibility of misunderstanding, we will simply write $Q_s$.
\begin{lemma}\label{lm:intr-cons}
Assume that the cylinder $Q_s^{\theta_s}$ is intrinsic. Then for any $a\in(1,2]$ also the cylinder $Q_{as}^{\theta_{as}}$ is intrinsic.
\end{lemma}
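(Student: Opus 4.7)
By construction, every cylinder produced by Lemma~\ref{lem:scal} is sub-intrinsic (property \eqref{scal:2}), so the upper bound in the definition \eqref{def:intr0} of intrinsicity is automatically satisfied for $Q_{as}^{\theta_{as}}$. Hence the only thing to prove is the matching lower bound
\[
\Mean{u^{m+1}}_{Q_{as}^{\theta_{as}}}^{\frac{1-m}{m+1}}\ \geq\ \tilde K^{-1}\,\theta_{as}
\]
for some $\tilde K\geq 1$ depending only on the data.

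I would use the nestedness $Q_s^{\theta_s}\subset Q_{as}^{\theta_{as}}$ provided by Lemma~\ref{lem:scal}\eqref{scal:0}, together with the intrinsic character of $Q_s^{\theta_s}$, to write
\[
\iint_{Q_{as}^{\theta_{as}}} u^{m+1}\,dxdt\ \geq\ \iint_{Q_s^{\theta_s}} u^{m+1}\,dxdt\ \geq\ K^{-\frac{m+1}{1-m}}\theta_s^{\frac{m+1}{1-m}}\abs{Q_s^{\theta_s}},
\]
and therefore
\[
\Mean{u^{m+1}}_{Q_{as}^{\theta_{as}}}\ \geq\ K^{-\frac{m+1}{1-m}}\theta_s^{\frac{m+1}{1-m}}\,\frac{\abs{Q_s^{\theta_s}}}{\abs{Q_{as}^{\theta_{as}}}}.
\]

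It then remains to bound the two ratios $\theta_s/\theta_{as}$ and $\abs{Q_s^{\theta_s}}/\abs{Q_{as}^{\theta_{as}}}$ from below, uniformly for $a\in(1,2]$. For the first, the growth estimate of Lemma~\ref{lem:scal}\eqref{scal:0} gives $r_{as}\geq a^{\hat b}r_s$, so that $\theta_{as}=as/r_{as}^2\leq a^{1-2\hat b}\theta_s$, i.e.\ $\theta_s\geq a^{2\hat b-1}\theta_{as}$, which is a positive constant multiple of $\theta_{as}$ since $\hat b\in(0,1/2]$ and $a\leq 2$. For the second, Lemma~\ref{lem:scal}\eqref{scal:3} applied with $\gamma=1/a\in[1/2,1)$ yields $r_{as}\leq c\,a^{\hat a}r_s$ and hence $\abs{Q_{as}^{\theta_{as}}}\leq c'\,a^{1+n\hat a}\abs{Q_s^{\theta_s}}$, whose prefactor is again harmless on $a\in(1,2]$.

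Plugging these two uniform bounds into the previous estimate and taking the $\frac{1-m}{m+1}$-th root produces the lower bound with a new constant $\tilde K$ depending only on $K$, $\hat a$, $\hat b$, $m$, and $n$. The only genuine issue to watch is that none of the various powers of $a$ blow up; this is automatic because $a$ is confined to the bounded interval $(1,2]$. The restriction $a>1$ is precisely what makes the inclusion $Q_s^{\theta_s}\subset Q_{as}^{\theta_{as}}$ available in the right direction, so the argument will not extend to arbitrary $a$.
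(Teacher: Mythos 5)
Your proof is correct and takes essentially the same route as the paper's: both rest on the intrinsicity of $Q_s^{\theta_s}$, the inclusion $Q_s^{\theta_s}\subset Q_{as}^{\theta_{as}}$ from Lemma~\ref{lem:scal}\eqref{scal:0}, the automatic sub-intrinsicity of $Q_{as}^{\theta_{as}}$ from \eqref{scal:2}, and the uniform comparability of $\theta_s\sim\theta_{as}$ and $\abs{Q_s^{\theta_s}}\sim\abs{Q_{as}^{\theta_{as}}}$ from \eqref{scal:0} and \eqref{scal:3}. The paper packages this as a single cyclic chain of inequalities, while you isolate the missing lower bound and make the two comparability estimates explicit, but the ingredients and the logic are the same.
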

\begin{proof}
Assume that $Q_s^{\theta_s}$ is intrinsic: this means that, up to a constant $K\ge1$,
\[
\theta_s=\left[\dashiint_{Q_s^{\theta_s}} u^{1+m}\,dxd\tau\right]^{\frac{1-m}{1+m}}.
\]
Now, for any $a\in(1,2]$
\begin{align*}
\theta_s&\le c_1\left[\dashiint_{Q_{as}^{\theta_{as}}} u^{1+m}\,dxd\tau\right]^{\frac{1-m}{1+m}}\\
&\le c_2\theta_{as}\quad\text{ by \eqref{scal:2} of Lemma~\ref{lem:scal}}\\
&\le c_3\theta_s\quad\text{ by \eqref{scal:3} of Lemma~\ref{lem:scal}}\\
&\le c_4\left[\dashiint_{Q_{as}^{\theta_{as}}} u^{1+m}\,dxd\tau\right]^{\frac{1-m}{1+m}}.
\end{align*}
\end{proof}
Boundedness estimate are well-known in the context of the singular porous medium equation (see, for example, 
\cite[Appendix~B]{DiBGiaVes11}). Here we show that they have a simple expression, when one considers \emph{intrinsic} cylinders. As a consequence, we are then able to deduce a very useful reverse H\"older inequality for powers of the solution $u$, again in intrinsic cylinders. 
\begin{proposition}\label{Prop:B:4:1} 
Let $u\ge0$ be a locally bounded, local, 
weak solution to \eqref{PME}--\eqref{PMS-eq:structure} 
 in $E_T$ with $\frac{(n-2)_+}{n+2}< m < 1$.
Fix $t>0$, assume that $f\in L^\infty_{\loc}(E_T)$, the cylinder $B_{r(t)}(x_o)\times(t_o,t_o+t]\subset E_T$ is 
one of the intrinsic cylinders built in Lemma~\ref{lem:scal} with respect to $u$, and  
\[
\sup_{Q_{2t}}\left[r^2(2t)|f|^2\right]\le\frac c{t}\left[\theta(2t)\right]^{\frac{1+m}{1-m}}
\]
for some $c>0$, where $Q_{2t}$ and $\theta(2t)$ are defined below.
Then there exists a positive constant $\gm$ depending only on 
the data $\datam$, such that for  $Q_{2t}:=B_{r(2t)}(x_o)\times(t_o-t,t_o+t]\subset E_T$ and for any $p>0$, 
we have 
\begin{align*}
\sup_{B_{r(t)}(x_o)\times[t_o,t_o+t]}u &\le\gm
\frac1{(1-\sig)^q}\left(\dashiint_{Q_{2t}} 
u^p\,dxd\tau\right)^{\frac{1}{p}},
\end{align*}
where $q$ depends only on  $p$, $n$, $m$.
\end{proposition}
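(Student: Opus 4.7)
The plan is to perform a De Giorgi iteration using the truncation energy estimate \eqref{Eq:B:3:1}, and then to upgrade the resulting $L^{m+1}$-based bound to an $L^p$-based one via a standard Moser-type reabsorption.

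First I would fix $k>0$ (to be chosen) and set up a nested family of intrinsic cylinders $Q_n$ with $Q_\infty = B_{r(t)}(x_o)\times(t_o,t_o+t]$ and $Q_0\subset Q_{2t}$, along with truncation levels $k_n = k(1-2^{-n})$ and cutoffs $\zeta_n\in C^\infty_o(Q_n)$ vanishing on the parabolic boundary, equal to one on $Q_{n+1}$, with $|D\zeta_n|\lesssim 2^n/r_n$ and $|\partial_t \zeta_n|\lesssim 4^n/s_n$. Because $Q_{2t}$ is intrinsic and by Lemma~\ref{lm:intr-cons} so are all intermediate $Q_n$, the relations $r_n\sim r(2t)$, $s_n\sim t$, and $\theta_n\sim \theta(2t)$ hold up to data-constants, and the scaling stays balanced throughout the iteration. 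Applying \eqref{Eq:B:3:1} on $Q_n$ with these data, the $u^{m+1}\zeta|\zeta_t|$-term is controlled by the sub-intrinsicity of $u^{m+1}$ on $Q_{2t}$, and the $f$-term is absorbed into the intrinsic scaling via the hypothesis $\sup_{Q_{2t}} r^2(2t)|f|^2 \lesssim t^{-1}\theta(2t)^{(1+m)/(1-m)}$.

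Second, I would combine the energy bound with a parabolic Gagliardo--Nirenberg-type Sobolev inequality, in the intrinsic-cylinder form tailored to the range $(n-2)_+/(n+2)<m<1$: interpolating the time-supremum of $\|(u^m-k_n^m)_+^{(m+1)/m}\|_{L^1(B_{r_n})}$ with the spatial $L^2$-norm of $|D(u^m-k_n^m)_+|$ yields an $L^{q}$-gain with exponent $q>2$. Setting
\[
Y_n = \dashiint_{Q_n}(u^m-k_n^m)_+^{\alpha}\,dxd\tau
\]
for the natural exponent $\alpha$ coming out of the interpolation, the energy--Sobolev combination produces the recursion $Y_{n+1}\le C b^n Y_n^{1+\delta}$ for some $b>1$, $\delta>0$ depending only on the data. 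By the standard geometric-convergence lemma, $Y_n\to 0$ as soon as $Y_0$ is sufficiently small; choosing $k^{m+1}$ proportional to a data-multiple of $\dashiint_{Q_{2t}}u^{m+1}$ achieves this, giving
\[
\esssup_{B_{r(t)}(x_o)\times(t_o,t_o+t]} u \;\le\; \gamma\Bigl(\dashiint_{Q_{2t}}u^{m+1}\,dxd\tau\Bigr)^{1/(m+1)}.
\]

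Finally, to pass from the exponent $m+1$ to an arbitrary $p>0$, I would run the entire argument between generic nested intrinsic cylinders $Q_{\sigma_1 t}\subset Q_{\sigma_2 t}$ with $1\le \sigma_1<\sigma_2\le 2$, which yields the $(\sigma_2-\sigma_1)^{-q_0}$ factor on the right-hand side. Then, the classical interpolation trick, splitting $u^{m+1} = u^{m+1-p}\cdot u^p$ with the first factor estimated by $(\esssup u)^{m+1-p}$ for $p<m+1$ (resp.\ using Hölder for $p>m+1$), combined with Young's inequality and a standard iteration lemma to reabsorb the sup, produces the bound with $L^p$-mean on the right and the claimed factor $(1-\sigma)^{-q}$.

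The main obstacle is verifying the iteration stays \emph{quantitatively} intrinsic: one must ensure that at every level $n$ the cylinder $Q_n$ is sub-intrinsic with a constant independent of $n$, so that energy constants, the Sobolev exponent, and the $f$-absorption bound can all be applied with uniform constants. This is exactly where Lemma~\ref{lm:intr-cons} and the comparability estimates \eqref{eq:r} of Lemma~\ref{lem:scal} are essential, since they lock the scaling $\theta_n\sim \theta(2t)$ throughout the shrinking family and prevent the recursive constants from degenerating.
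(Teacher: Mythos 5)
Your proposal follows essentially the same strategy as the paper's proof: a De Giorgi iteration over a nested family of cylinders built from the intrinsic radius function $r(\cdot)$, using the truncation energy estimate \eqref{Eq:B:3:1}, the parabolic Sobolev embedding, fast geometric convergence with $k\sim(\dashiint u^{m+1})^{1/(m+1)}$ (using intrinsicity of the outer cylinder to collapse the $\theta(2t)$-dependence), and finally Lemma~6.1 of \cite{Giu03} to convert the $L^{m+1}$-mean into an $L^p$-mean for arbitrary $p>0$. The only small imprecision is your attribution of the $u^{m+1}\zeta|\zeta_t|$-term control to ``sub-intrinsicity'': the paper actually handles it via the De Giorgi level-set measure inequality $|[u>k_{j+1}]\cap Q_j|\le \gm 2^{(j+1)s}k^{-ms}\iint_{Q_j}(u^m-k_j^m)_+^s$, while sub-intrinsicity enters to fix the scaling $\theta(2t)$ and eliminate it from the final estimate.
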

\begin{remark}
{\normalfont The intrinsic cylinders of Lemma~\ref{lem:scal} are centered at $(t_o,x_o)$, whereas here we consider cylinders which have \emph{a lower vertex} at $(t_o,x_o)$. However, Lemma~\ref{lem:scal} can be adapted to cover this case, or, alternatively, we can deal with centered cylinders, and correspondingly rephrase Proposition~\ref{Prop:B:4:1}.}
\end{remark}
\begin{proof} 
We follow the same strategy used for \cite[Proposition~B.4.1]{DiBGiaVes11}. Without loss of generality, we can assume $(x_o,t_o)\equiv(0,0)$. Fix $t>0$, consider the cylinder
\[
Q_t^{\theta_t}=B_{r(t)}\times(0,t]
\]
and assume it to be intrinsic. By Lemma~\ref{lm:intr-cons}, for any $\sigma\in(0,1]$ we have that all the cylinders
\[
Q_{(1+\sigma)t}^{\theta_{(1+\sigma)t}}=B_{r((1+\sigma)t)}\times(-\sigma t,t]
\]
are intrinsic. From line to line the intrinsic constants may be different, but they are stable. For any $\sigma\in(0,1]$ by \eqref{eq:radius-s} we have
\[
\theta((1+\sigma)t)\,r^2((1+\sigma)t)=(1+\sigma)t.
\]
For fixed 
$\sig\in(0,1]$ and $j=0,1,2,\dots$ set
\begin{equation*}
\begin{array}{ll}
{\dsty t_j=-\sig t-\frac{1-\sig}{2^j}t,\qquad }
&{\dsty \rho_j=r(-t_j+t)}=r((1+\sig)t+\frac{1-\sig}{2^j}t),\\
{}\\
{\dsty B_j=B_{\rho_j},\qquad }&{\dsty Q_j=B_j\times(t_j,t).}
\end{array}
\end{equation*}
This is a family of nested and shrinking cylinders with common 
``vertex'' at $(0,t)$, and by construction 
\begin{equation*}
Q_o=B_{r(2t)}\times(-t,t]\quad\hbox{\rm and}
\quad Q_\infty=B_{r((1+\sig)t)}\times(-\sig t,t]. 
\end{equation*}
Having assumed that $u$ is locally bounded in $E_T$, set 
\begin{equation*}
M=\essup_{Q_o} u,\quad 
M_\sig=\essup_{Q_\infty} u.
\end{equation*}
We first find a relationship between $M$ and $M_\sig$. 
Denote by $\zeta$ a nonnegative, piecewise smooth cutoff 
function in $Q_j$ that equals one on $Q_{j+1}$, and has the form 
$\zeta(x,t)=\zeta_1(x)\zeta_2(t)$, where
\begin{equation*}
\begin{array}{lc}
{\dsty 
\zeta_1=\left\{
\begin{array}{ll}
1\>&\text{ in }\> B_{j+1}\\
0\>&\text{ in }\>{\mathbb R}^n-B_j
\end{array}\right .}\>
&{\dsty |D\zeta_1|\le\frac{1}{\rho_j-\rho_{j+1}}},\\
{}\\
{\dsty 
\zeta_2=\left\{
\begin{array}{ll}
0\>&\text{ for }\> t\le t_j\\
1\>&\text{ for }\> t\ge t_{j+1}
\end{array}\right .}\>
&{\dsty 0\le \zeta_{2,t}\le\frac{2^{j+1}}{(1-\sig)t};}
\end{array}
\end{equation*} 
introduce the increasing sequence of levels 
\begin{equation*}
k_j=k-{\txty \frac1{2^{j}}}k
\end{equation*} 
where $k>0$ is to be chosen. Estimates (\ref{Eq:B:3:1}) 
with $\umkmnup$ yield
\begin{equation}\label{Eq:B:4:2}
\begin{aligned}
\sup_{t_j\le \tau\le t}&\int_{B_j}[\umkmnup
\zeta]^{\frac{m+1}m}(x,\tau)\,dx\\
&\qquad+\frac{\nu}4\iint_{Q_j}|D [\umkmnup\zeta]|^2\,dxd\tau\\
&\le\gm\iint_{Q_j} u^{m+1}\chi_{[\umkmnup>0]}\zeta\zeta_\tau\,dxd\tau\\
&\qquad+\gm\iint_{Q_j}\umkmnup^2|D\zeta|^2\,dxd\tau\\
&\qquad+\gm\iint_{Q_j}\zeta^2\umkmnup |f| \,dxd\tau.
\end{aligned}
\end{equation}
In the estimations below repeated use is made 
of the inequality 
\begin{equation*}
|[u>k_{j+1}]\cap Q_j| \le\gm\frac{2^{(j+1)s}}{k^{ms}}
\iint_{Q_j}(u^m-k_j^m)_+^s\,dxd\tau
\end{equation*}
valid for all $s>0$. Then estimate
{\small
\begin{align*}
\iint_{Q_j}u^{m+1}\chi_{[\umkmnup>0]}\zeta\zeta_\tau\,dxd\tau
&\le\gm\frac{2^{(2+\frac{m+1}m)j}}{(1-\sig)t}
\iint_{Q_j}\umkmnp^{\frac{m+1}m}\,dxd\tau\\
\iint_{Q_j}\umkmnup^2|D\zeta|^2\,dxd\tau
&\le\gm\frac{2^{\frac{m+1}m j}}{(\rho_j-\rho_{j+1})^2}
\frac1{k^{1-m}}\iint_{Q_j}\umkmnp^{\frac{m+1}m}\,dxd\tau\\
&\le\gm\frac{2^{\frac{m+1}m j}}{(1-\sig)t}2^{2j}
\frac{\theta(2t)}{k^{1-m}}\iint_{Q_j}\umkmnp^{\frac{m+1}m}\,dxd\tau.
\end{align*}
}
Moreover,
\begin{align*}
\iint_{Q_j}\zeta^2\umkmnup |f| \,dxd\tau\le&\frac12\iint_{Q_j}\frac{\umkmnup^2}{(\rho_j-\rho_{j+1})^2}\,dxd\tau\\
&+\frac12\iint_{Q_j}\rho_j^2|f|^2\chi_{[\umkmnup>0]}\,dxd\tau=I + II.
\end{align*}
The first term on the right-hand side $I$ can be estimated as before, namely
\[
I\le \gm\frac{2^{\frac{m+1}m j}}{(1-\sig)t}2^{2j}
\frac{\theta(2t)}{k^{1-m}}\iint_{Q_j}\umkmnp^{\frac{m+1}m}\,dxd\tau.
\]
As for the second term $II$, we have
\begin{align*}
\iint_{Q_j}\rho_j^2|f|^2\chi_{[\umkmnup>0]}\,dxd\tau&\le\iint_{Q_j}r^2(2t)|f|^2\chi_{[\umkmnup>0]}\,dxd\tau\\
&\le\sup_{Q_{2t}}\left[r^2(2t)|f|^2\right]\,|[u>k_{j+1}]\cap Q_j|\\
&\le\gm\sup_{Q_{2t}}\left[r^2(2t)|f|^2\right]\,\frac{2^{(j+1)\frac{m+1}{m}}}{k^{m+1}}
\iint_{Q_j}(u^m-k_j^m)_+^{\frac{m+1}{m}}\,dxd\tau\\
&\le\gm\frac{2^{\frac{m+1}{m}j}}{(1-\sig)t}\,\left[\frac{\theta(2t)}{k^{1-m}}\right]^{\frac{1+m}{1-m}}
\iint_{Q_j}(u^m-k_j^m)_+^{\frac{m+1}{m}}\,dxd\tau.
\end{align*}
Combining these estimates,
(\ref{Eq:B:4:2}) yields
\begin{align*}
\sup_{t_j\le \tau\le t}&\int_{B_j}[\umkmnup\zeta]^{\frac{m+1}m}(x,\tau)\,dx +
\iint_{Q_j}|D [\umkmnup\zeta]|^2\,dxd\tau\\
&\le\gm \frac{2^{\frac{3(m+1)}m j}}{(1-\sig) t}
\Bigg[1+ \frac{\theta(2t)}{k^{1-m}}+\left[\frac{\theta(2t)}{k^{1-m}}\right]^{\frac{1+m}{1-m}}\Bigg]
\iint_{Q_j}\umkmnp^{\frac{m+1}m}\,dxd\tau.
\end{align*}
The last term in $[\cdots]$ is estimated by stipulating to take
\begin{equation}\label{Eq:B:4:3} 
\frac{\theta(2t)}{k^{1-m}}\le1\ \ \Rightarrow\ \ k\ge\left(\theta(2t)\right)^{\frac{1}{1-m}}.
\end{equation}
With this stipulation, the previous inequality implies
\begin{equation}\label{Eq:B:4:4}
\begin{aligned}
\sup_{t_j\le \tau\le t}&\int_{B_j}[\umkmnup\zeta]^{\frac{m+1}m}(x,\tau)\,dx +
\iint_{Q_j}|D [\umkmnup\zeta]|^2\,dxd\tau\\
&\le\frac{\gm 2^{\frac{3(m+1)}m j}}{(1-\sig) t}
\iint_{Q_j}\umkmnp^{\frac{m+1}m}\,dxd\tau.
\end{aligned}
\end{equation}
By the H\"older inequality and the Parabolic Sobolev embedding  
\begin{align*}
& \iint_{Q_{n+1}}\umkmnup^{\frac{m+1}m}\,dxd\tau\le
\Big[\sup_{t_j\le \tau\le t}\int_{B_j}[\umkmnup\zeta]^{\frac{m+1}m}(x,\tau)\,dx
\Big]^{\frac2n\frac{m+1}{qm}}\\
&\times\Big(\iint_{Q_j}|D\umkmnup|^2\zeta_j^2\,dxd\tau
+\iint_{Q_j}\umkmnup^2|D\zeta_j|^2\,dxd\tau\Big)^{\frac{m+1}{qm}}\\
&\times|Q_j|^{1-\frac{m+1}{qm}}\Big(\gm
\frac{2^{\frac{m+1}m j}}{k^{m+1}}
\frac1{|Q_j|}\iint_{Q_j}\umkmnp^{\frac{m+1}m}\,dxd\tau\Big)^{1-\frac{m+1}{qm}}
\end{align*}
where
\begin{equation*}
q=\frac{2(nm+m+1)}{nm}.
\end{equation*}
Now set
\begin{equation*}
Y_j=\frac1{|Q_j|}\iint_{Q_j}\umkmnp^{\frac{m+1}m}\,dxd\tau. 
\end{equation*}
Taking into account (\ref{Eq:B:4:4}), in terms of $Y_j$ 
the previous inequality becomes
\begin{equation*}
Y_{j+1}\le\gm\frac{b^j}{(1-
\sig)^{\frac{(m+1)(n+2)}{nqm}}k^{\frac{(m+1)(mq-m-1)}{qm}}}
\Big(\theta(2t)\Big)^{\frac{m+1}{qm}}
Y_j^{1+\frac{2(m+1)}{nqm}}, 
\end{equation*}
where 
\begin{equation*}
b=2^{\frac{3(m+1)}m(1+\frac{2(m+1)}{nqm})}.
\end{equation*}
Now $Y_j\to0$ as $j\to+\infty$, provided 
$k$ is chosen such that
\begin{equation*}
Y_o=\dashiint_{Q_o} u^{m+1}\,dxd\tau=\gm(1-\sig)^{\frac{n+2}2}
\Big(\theta(2t)\Big)^{\frac n2}k^{\frac{n(m-1)+2m+2}2}.
\end{equation*}
With this choice
\begin{equation}\label{Eq:B:4:5}
\begin{aligned}
M_\sig\le&\gm\frac{1}{(1-\sig)^{\frac{n+2}{n(m-1)+2m+2}}}
\Big(\frac1{\theta(2t)}\Big)^{\frac n{n(m-1)+2m+2}}\Big(\dashiint_{Q_o} u^{m+1}
\,dxd\tau\Big)^{\frac2{n(m-1)+2m+2}}. 
\end{aligned}
\end{equation}
Since $Q_o$ is intrinsic, up to a constant
\[
\theta(2t)=\left(\dashiint_{Q_o}u^{m+1}\,dxd\tau\right)^{\frac{1-m}{1+m}}.
\]
Hence, 
\begin{equation*}
M_\sig\le\gm\frac{1}{(1-\sig)^{\frac{n+2}{n(m-1)+2m+2}}}
\Big(\dashiint_{Q_o} u^{m+1}
\,dxd\tau\Big)^{\frac1{m+1}},
\end{equation*}
and the estimate does not change when we take into account the previous stipulation \eqref{Eq:B:4:3} about $k$, once more due to the intrinsic nature of $Q_o$. By the interpolation Lemma~6.1 of \cite{Giu03} 
we conclude.
\end{proof}
As a straightforward consequence, we obtain the following reverse H\"older inequality.
\begin{corollary}\label{RHI-for-u}
Let $u\ge0$ be a locally bounded, local, 
weak solution to \eqref{PME}--\eqref{PMS-eq:structure} 
 in $E_T$ with $\frac{(n-2)_+}{n+2}< m < 1$. Fix $t>0$, assume that $f\in L^\infty_{\loc}(E_T)$, the cylinder $B_{r(t)}(x_o)\times(t_o,t_o+t]\subset E_T$ is intrinsic with respect to $u$, and  
\[
\sup_{Q_{2t}}\left[r^2(2t)|f|^2\right]\le\frac c{t}\left[\theta(2t)\right]^{\frac{1+m}{1-m}}.
\]
for some $c>0$. 
Then there exists a positive constant $\gm$ depending only on 
the data $\datam$, such that for  $Q_{2t}:=B_{r(2t)}(x_o)\times(t_o-t,t_o+t]\subset E_T$, 
we have 
\begin{align*}
\left(\dashiint_{Q_{\frac32t}} 
u^{m+1}\,dxd\tau\right)^{\frac{1}{m+1}} &\le\gm
\left(\dashiint_{Q_{2t}} 
u^m\,dxd\tau\right)^{\frac{1}{m}}.
\end{align*}
\end{corollary}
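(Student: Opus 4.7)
The strategy is to exploit the freedom in the exponent $p$ in Proposition~\ref{Prop:B:4:1}, specialized to $p = m$, together with the trivial pointwise splitting $u^{m+1} = u \cdot u^{m}$. The key observation is that once a sup--estimate in terms of the $L^{m}$ mean is in hand, a reverse H\"older inequality with the pair $(m+1, m)$ follows almost for free.

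\textbf{Step 1: sup estimate on an intermediate cylinder.} Applying Proposition~\ref{Prop:B:4:1} (in its version for centered cylinders, as explained in the remark following the proposition) with $p = m$ and with the scale parameter chosen so that the inner cylinder of Proposition~\ref{Prop:B:4:1} contains $Q_{\frac{3}{2}t}$, I obtain
\[
\essup_{Q_{\frac{3}{2}t}} u \leq \gamma \left( \dashiint_{Q_{2t}} u^{m}\,dxd\tau \right)^{\frac{1}{m}},
\]
with $\gamma$ depending only on the data. The hypothesis on $f$ needed by Proposition~\ref{Prop:B:4:1} coincides with the one assumed in the corollary, so nothing extra is required on this side.

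\textbf{Step 2: splitting $u^{m+1}$ and comparing averages.} Writing $u^{m+1} = u \cdot u^{m}$ and pulling the sup out, I get
\[
\dashiint_{Q_{\frac{3}{2}t}} u^{m+1}\,dxd\tau \leq \Big(\essup_{Q_{\frac{3}{2}t}} u\Big)\, \dashiint_{Q_{\frac{3}{2}t}} u^{m}\,dxd\tau.
\]
The average over $Q_{\frac{3}{2}t}$ of $u^{m}$ is then compared with the average over $Q_{2t}$ via the measure ratio: by \eqref{scal:3} of Lemma~\ref{lem:scal} (applied with $\gamma = 3/4$, so that $s = \frac{3}{2}t$ and $\sigma = 2t$), one has $|Q_{2t}|/|Q_{\frac{3}{2}t}| \leq c(\data)$, hence
\[
\dashiint_{Q_{\frac{3}{2}t}} u^{m}\,dxd\tau \leq \gamma \dashiint_{Q_{2t}} u^{m}\,dxd\tau.
\]

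\textbf{Step 3: combining and taking the $(m+1)$-th root.} Inserting the sup estimate of Step~1 and the mean--value comparison of Step~2 gives
\[
\dashiint_{Q_{\frac{3}{2}t}} u^{m+1}\,dxd\tau \leq \gamma \left(\dashiint_{Q_{2t}} u^{m}\,dxd\tau\right)^{\frac{1}{m}} \dashiint_{Q_{2t}} u^{m}\,dxd\tau = \gamma \left(\dashiint_{Q_{2t}} u^{m}\,dxd\tau\right)^{\frac{m+1}{m}},
\]
and raising both sides to the power $1/(m+1)$ yields exactly the claimed reverse H\"older inequality.

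\textbf{Expected main obstacle.} The only nontrivial bookkeeping is making sure that the cylinder $Q_{\frac{3}{2}t}$ sits inside the inner sup--cylinder of Proposition~\ref{Prop:B:4:1} for an admissible choice of $\sigma$, and that the intrinsic scaling $\theta(\cdot)$ behaves compatibly under the dilations between $t$, $\frac{3}{2}t$ and $2t$. This is taken care of by Lemma~\ref{lm:intr-cons} (which propagates intrinsicity from $t$ to $2t$) together with the equivalence of $\theta$ at nearby scales supplied by \eqref{eq:r}, so no further work is needed beyond invoking these.
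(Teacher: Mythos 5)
Your proposal is correct and is essentially the intended derivation: the paper states the corollary as a straightforward consequence of Proposition~\ref{Prop:B:4:1}, and the key step you identify -- applying that proposition with $p=m$ and $\sigma=1/2$ so that the inner sup-cylinder coincides with $Q_{\frac32 t}$ -- is exactly what is meant. One small simplification: Step~2's splitting $u^{m+1}=u\cdot u^m$ and the measure-ratio comparison between $Q_{\frac32 t}$ and $Q_{2t}$ are unnecessary, since after Step~1 you can directly bound $\dashiint_{Q_{\frac32 t}}u^{m+1}\,dxd\tau\le\bigl(\essup_{Q_{\frac32 t}}u\bigr)^{m+1}$ and take the $(m+1)$-th root.
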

\begin{remark}\label{rem:revH}
{\normalfont We point out that the derivation of the reverse H\"older estimate for $u^m$ is the only place where we make use of the $L^\infty$ bounds of the right-hand side. 
There are other, more rudimentary possibilities to derive a reverse H\"older estimate like the one in Corollary~\ref{RHI-for-u}, that would allow to consider vector-valued and signed solutions, weaker assumptions on $\A$ and the 
right-hand side $f$. Indeed, it can be achieved in three steps, by first applying a parabolic \Poincare inequality on $u^m$, then using the energy estimates, and finally an interpolation argument, as in Lemma~6.1 of \cite{Giu03}. We chose to limit ourselves to the scalar framework and nonnegative solutions, in order to allow a better understanding of the new techniques introduced in the paper, which we consider to be the priority in this paper.}
\end{remark}
\section{Intrinsic Reverse H\"older Inequalities}\label{S:intrinsic}
In the next two subsections we will prove reverse 
H\"older inequalities in intrinsic cylinders. We will have to distinguish two different conditions, the so-called \emph{Degenerate} and \emph{Non-Degenerate} Regimes.
As a starting point we include the following lemma, which allows to switch mean values in time.
\begin{lemma}
\label{lem:time}
Take any cylinder $Q_{2s,2r}=(t_0-2s,t_0+2s)\times B_{2r}\subset E_T$,
and let $\tilde{\eta}$ be a constant-in-time cut off function, such that $\tilde\eta\in C^\infty_0((B_{2r}),[0,1])$, $\tilde\eta(x)\equiv 1$ for $x\in B_{r}$, and $\norm{D \eta}_\infty\leq\frac{c}{r}$. Then for $\displaystyle\eta=\frac{\tilde\eta}{\sqrt{\abs{ B_{r}}}}$ and for $t_0-2s\leq \sigma<\tau \leq t_0+2s$ we find
\begin{align}
\label{eq:time}
\abs{\Mean{u(\tau)}^{\eta^2}_{B_{2r}}-\Mean{u(\sigma)}^{\eta^2}_{B_{2r}}}\leq c \,s\bigg(\frac{1}{r}\dashiint_{Q_{2s,2r}}\abs{D  u^m}\,dxdt+\dashiint_{Q_{2s,2r}}\abs{f}\, dxdt\bigg).
\end{align}
Similarly, if we take $\displaystyle\eta=\frac{\tilde\eta}{\abs{ B_{r}}}$, we obtain
\begin{align}
\label{eq:time-bis}
\abs{{\Mean{u(\tau)}^{\eta}_{B_{2r}}}-{\Mean{u(\sigma)}^{\eta}_{B_{2r}}}}\leq c\,s\bigg( \frac{1}{r}\dashiint_{Q_{2s,2r}}\abs{D  u^m}\,dxdt+\dashiint_{Q_{2s,2r}}\abs{f}\, dxdt\bigg).
\end{align}
\end{lemma}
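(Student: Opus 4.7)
The plan is to plug a separated test function $\phi(x,t)=\tilde\eta^2(x)\psi^h(t)$ into the weak formulation \eqref{PMS-eq: weak sol} (respectively its time-mollified version \eqref{PMS-eq: weak sol mollified}), where $\psi^h$ is a piecewise-linear approximation of $\chi_{[\sigma,\tau]}$ supported in $(t_0-2s,t_0+2s)$ with $\psi^h\to\chi_{[\sigma,\tau]}$ as $h\to 0$, exactly in the spirit of the cutoffs built in the proof of Lemma~\ref{lem:osc-energy}. Since $u\in C^0_{\loc}(0,T;L^{m+1}_{\loc}(E))$ by \eqref{PMS-eq: reg weak sol}, the map $t\mapsto\int_{B_{2r}}u(t)\tilde\eta^2\,dx$ is continuous, so letting first $\eps\to0$ in the mollification and then $h\to0$ yields the identity
\begin{equation*}
\int_{B_{2r}}u(\tau)\tilde\eta^2\,dx-\int_{B_{2r}}u(\sigma)\tilde\eta^2\,dx=-\int_{\sigma}^{\tau}\!\!\int_{B_{2r}}\A(x,t,u,Du^m)\cdot D(\tilde\eta^2)\,dxdt+\int_{\sigma}^{\tau}\!\!\int_{B_{2r}}f\tilde\eta^2\,dxdt.
\end{equation*}

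Next, the key observation is that the normalization constants in the definition of $\Mean{\cdot}^{\eta^2}_{B_{2r}}$ cancel out: for $\eta=\tilde\eta/\sqrt{|B_r|}$ one has $\Mean{u(t)}^{\eta^2}_{B_{2r}}=(\int u(t)\tilde\eta^2\,dx)/(\int\tilde\eta^2\,dx)$, and since $\tilde\eta\equiv 1$ on $B_r$ we obtain $\int\tilde\eta^2\,dx\ge|B_r|$. Combined with the structure conditions \eqref{PMS-eq:structure}, namely $|\A(x,t,u,Du^m)|\le L\,|Du^m|$, and the gradient bound $|D(\tilde\eta^2)|\le 2\tilde\eta|D\tilde\eta|\le c/r$, I get
\begin{equation*}
\big|\Mean{u(\tau)}^{\eta^2}_{B_{2r}}-\Mean{u(\sigma)}^{\eta^2}_{B_{2r}}\big|\le\frac{c}{|B_r|}\bigg[\frac{1}{r}\iint_{Q_{2s,2r}}|Du^m|\,dxdt+\iint_{Q_{2s,2r}}|f|\,dxdt\bigg].
\end{equation*}
Rewriting the integrals as mean values via $|Q_{2s,2r}|=4s\,|B_{2r}|\sim s\,|B_r|$ produces the factor $s$ in front and gives exactly \eqref{eq:time}.

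The estimate \eqref{eq:time-bis} is obtained by the same procedure after replacing $\tilde\eta^2$ by $\tilde\eta$ in the test function: now $\phi=\tilde\eta(x)\psi^h(t)$, $|D\phi|\le c/r$, and $\int\tilde\eta\ge|B_r|$, so the identical chain of inequalities yields the desired bound for the $\eta$-weighted mean with $\eta=\tilde\eta/|B_r|$. I do not anticipate any real obstacle: the only slightly delicate point is the rigorous passage to the limit in the time-derivative term, but this is handled exactly as in the proof of Lemma~\ref{lem:osc-energy}, exploiting the $C^0$-in-time regularity \eqref{PMS-eq: reg weak sol} to evaluate $\int u(\sigma)\tilde\eta^2$ and $\int u(\tau)\tilde\eta^2$ for arbitrary (not merely a.e.) $\sigma,\tau\in[t_0-2s,t_0+2s]$.
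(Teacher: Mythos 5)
Your proof is correct and follows essentially the same route as the paper: the paper writes "by the equation" and directly pairs $\partial_t u$ against $\eta^2$ to produce the same identity you obtain more carefully via the test function $\tilde\eta^2(x)\psi^h(t)$ in the mollified weak formulation, then applies the structure condition $|\A|\le L|Du^m|$ and the bound on $D\tilde\eta$. Your extra care with the mollification limit (justified by the $C^0_{\loc}(0,T;L^{m+1}_{\loc})$ regularity) and your explicit observation that the $|B_r|$ normalizations in $\eta^2$ cancel and $\int\tilde\eta^2\ge|B_r|$ are the details the paper's one-line derivation leaves implicit.
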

\begin{proof}
By the equation, we find that
\begin{align*}
\abs{\Mean{u(\tau)}^{\eta^2}_{B_{2r}}-\Mean{u(\sigma)}^{\eta^2}_{B_{2r}}}&=\abs{\int_\sigma^\tau\skp{\partial_t u(t)}{\eta^2}\, dt}\\
&=\Bigabs{c\int_\sigma^\tau\dashint_{B_{2r}}f(x,t)\,\eta^2-\A(x,t,u,Du^m)\cdot D  \eta\eta\,dxdt}
\\
&\leq \frac{cs}{r}\dashiint_{Q_{2s,2r}}\abs{D  u^m}\,dxdt+c\,s\dashiint_{Q_{2s,2r}} \abs{f}\,dxdt,
\end{align*}
where $c>1$ depends only on the data.
\end{proof}
\subsection{The Degenerate Regime}
In the following we consider an \emph{intrinsic} cylinder $Q_{s}^{\theta_{s}}\equiv Q_{s,\sqrt{s/\theta_s}}\equiv Q_{s,r(s)}\subset E_T$, and we assume that
\begin{align}\label{eq:deg1} 
\left[\dashiint_{Q_{s}^{\theta_{s}}}\abs{u^{m}-\Mean{u^{m}}_{Q_{s}^{\theta_{s}}}}^\frac{m+1}m\,dxdt\right]^{\frac{1}{m+1}}\geq \epsilon \left[\dashiint_{Q_{s}^{\theta_{s}}}u^{m+1}\,dxdt\right]^{\frac{1}{m+1}}
\end{align}  
for some $\epsilon>0$. We denote this condition as the \emph{Degenerate Regime}. Our aim is to obtain a reverse H\"older inequality for $|Du^m|$. We have the following proposition.

\begin{proposition}\label{Prop:rev-hold-deg}
Let $Q_{s}^{\theta_{s}}$ be one of the intrinsic cylinders constructed in Lemma~\ref{lem:scal} and assume it satisfies \eqref{eq:deg1}. Then, there exist $\vartheta_1\in (0,1)$ and a constant $c_\epsilon>1$ that depends only on $\epsilon, \vartheta_1$, and the data, such that
\begin{equation}\label{rev-hold-deg-I}
\begin{aligned}
\dashiint_{{Q_{s}^{\theta_{s}}}}\abs{Du^m}^2 dxdt
&\le c_\epsilon\bigg(\dashiint_{Q_{3s,3r(s)}}\abs{Du^m}^{2\vartheta_1} dxdt\bigg)^\frac{1}{\vartheta_1}
+c_\epsilon\sup_{Q_{3s,3r(s)}}[r^2(s)|f|^2].
\end{aligned}
\end{equation}
\end{proposition}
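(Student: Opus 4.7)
The plan is to combine an energy estimate with a parabolic Sobolev-Poincar\'e (Gagliardo-Nirenberg) inequality, using the degeneracy condition \eqref{eq:deg1} to convert zero-order averages into gradient terms at a lower power. First, since $Q_s^{\theta_s}$ is intrinsic and hence sub-intrinsic, Corollary~\ref{cor:deg-subint} (applied on a suitably nested pair of cylinders inside $Q_{3s,3r(s)}$) provides
\begin{equation*}
\dashiint_{Q_s^{\theta_s}} |Du^m|^2\,dxdt \le \frac{\gamma}{s}\dashiint_{Q_s^{\theta_s}} u^{m+1}\,dxdt + \gamma\sup_{Q_{3s,3r(s)}}\big[r(s)^2|f|^2\big],
\end{equation*}
where the intrinsic identity $\theta_s^{(m+1)/(1-m)}\sim\dashiint u^{m+1}$ has been used. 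The degeneracy condition \eqref{eq:deg1} then lets me replace the $u^{m+1}$-average by the oscillation of $u^m$, namely $\dashiint u^{m+1}\le\epsilon^{-(m+1)}\dashiint |u^m-\Mean{u^m}_{Q_s^{\theta_s}}|^{(m+1)/m}\,dxdt$, so the $L^2$ Dirichlet energy is controlled by the $L^{(m+1)/m}$ oscillation of $u^m$ (plus $f$-terms).

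The next step is a parabolic Sobolev-Poincar\'e inequality of Gagliardo-Nirenberg type, applied to $v:=u^m-\Mean{u^m}$. Interpolating between $L^\infty_tL^{(m+1)/m}_x$ and $L^p_tW^{1,p}_x$ yields
\begin{equation*}
\dashiint_{Q_{3s,3r(s)}}|v|^{(m+1)/m}\,dxdt \le C\,r(s)^{(m+1)/m}\Big(\sup_t\dashint_{B_{3r(s)}}|v|^{(m+1)/m}dx\Big)^{p/n}\dashiint_{Q_{3s,3r(s)}}|Du^m|^p\,dxdt,
\end{equation*}
with the exponent $p$ determined by $p\big(1+\tfrac{m+1}{mn}\big)=\tfrac{m+1}{m}$, that is $p=n(m+1)/(mn+m+1)$. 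A direct computation shows that $p<2$ is equivalent to $m>(n-2)_+/(n+2)$, so under our hypothesis one may set $2\vartheta_1:=p$ with $\vartheta_1\in(0,1)$.

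To close the argument, the sup-in-time term is bounded by the full energy estimate \eqref{PMS-eq: energy modified} on the pair $(Q_s^{\theta_s},Q_{3s,3r(s)})$, combined with the best-constant property \eqref{bcp} and Lemma~\ref{lem:trick} to pass between means of $u^m$ and powers of means of $u$. In the degenerate regime this yields $\sup_t\dashint|v|^{(m+1)/m}\le c\,\dashiint|v|^{(m+1)/m}+c\sup r(s)^2|f|^2$. Plugging this back into the Gagliardo-Nirenberg estimate and then into the energy bound, Young's inequality isolates the $L^2$ gradient on the left and produces the claimed reverse H\"older inequality with power $2\vartheta_1$ on the right.

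The main obstacle is the delicate bookkeeping of intrinsic constants through this chain of interpolations: the Gagliardo-Nirenberg exponents must be matched with the scaling $\theta_sr(s)^2=s$, and the absorption of the $(\sup)^{p/n}$ factor requires the oscillation of $u^m$ to control itself in the sup-in-time norm, which is achievable only thanks to the degeneracy condition \eqref{eq:deg1}. The critical lower bound $m>(n-2)_+/(n+2)$ appears naturally at the Gagliardo-Nirenberg stage as precisely the condition $p<2$, in complete analogy with the super-critical singular range $p>\tfrac{2n}{n+2}$ for the $p$-Laplacian.
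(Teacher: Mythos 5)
Your overall plan is close to the paper's in spirit (energy estimate, degeneracy condition to pass from $u^{m+1}$ to the oscillation of $u^m$, Sobolev--Poincar\'e to get the $L^{2\vartheta_1}$ gradient, with the critical range $m>(n-2)_+/(n+2)$ appearing as the condition $2\vartheta_1<2$), but there is a genuine gap at the Gagliardo--Nirenberg step. The inequality
\begin{equation*}
\dashiint_{Q}\abs{v}^{\frac{m+1}{m}}\,dxdt \leq C\,r^{p}\Big(\sup_{t}\dashint_{B}\abs{v}^{\frac{m+1}{m}}\,dx\Big)^{p/n}\dashiint_{Q}\abs{D u^{m}}^{p}\,dxdt
\end{equation*}
with $v=u^m-\Mean{u^m}_{Q}$ and the \emph{space-time} mean subtracted is false as stated: take $u^m\equiv M$, a large constant. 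Then $Du^m\equiv 0$ and the right-hand side vanishes while the left does not (unless one takes the very mean $\Mean{u^m}_Q=M$, but $v$ is evaluated against the fixed space-time mean, not the time-slice mean). Sobolev--Poincar\'e can only be invoked at each time slice against the spatial mean $\Mean{u^m(t)}_{B}$, and the discrepancy between this time-slice mean and the space-time mean $\Mean{u^m}_Q$ (the ``mean-shift'' term) is a genuine extra contribution that your argument does not control. This is exactly where the PDE must enter: the paper splits the oscillation into a time-slice oscillation $\abs{u^m-\Mean{u(t)}_B^m}^{(m+1)/m}$ (treated by Sobolev--Poincar\'e as you propose) \emph{plus} a term $\abs{(\Mean{u(t)}^{\eta^2}_B)^m-\lambda_o^m}^{(m+1)/m}$, and the latter is bounded by appealing to Lemma~\ref{lem:time}, which uses the weak formulation to estimate how spatial averages drift in time, together with a lower bound on $\lambda_o$ obtained from Corollary~\ref{RHI-for-u}.

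A secondary concern is that your closing loop does not visibly produce the claimed homogeneity. If one accepts the Gagliardo--Nirenberg inequality with exponent $p=n(m+1)/(mn+m+1)$ and inserts the sup-in-time bound, the $\dashiint\abs{v}^{(m+1)/m}$ term appears to a power $p/n<1$ on the right, and absorbing it gives $[\dashiint\abs{Du^m}^p]^{n/(n-p)}$, which coincides with $(\dashiint\abs{Du^m}^{2\vartheta_1})^{1/\vartheta_1}$ only when $p=2n/(n+2)$, i.e.\ $m=1$. The paper avoids this mismatch by interpolating the $\theta_s$-expression directly as a product $(\sup_t\cdots)^{1-\alpha}\dashint_{\Lambda_s}(\cdots)^{\alpha}\,dt$, choosing $\alpha=\frac{2\vartheta_1 m}{m+1}$ so that the spatial Sobolev--Poincar\'e produces the $L^{2\vartheta_1}$ gradient at exactly power $1$ inside the time integral, and then concluding with Jensen. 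Without this precise bookkeeping and without the mean-shift term handled through the equation, your argument does not close.
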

\begin{proof}
Without loss of generality, we can assume that
\[
\sup_{Q_{3s,3r(s)}}[r^2(s)|f|^2]\,\le c\frac{\theta_s^{\frac{m+1}{1-m}}}{s}
\]
for some positive constant $c$; otherwise, by Corollary~\ref{cor:deg-subint}, we immediately have that
\[
\dashiint_{Q_{s}^{\theta_{s}}}\abs{Du^m}^2 dxdt\le\gm\sup_{Q_{3s,3r(s)}}[r^2(s)|f|^2]
\]
and there is nothing to prove.

In order to conclude, it suffices to show that under the previous assumption on $f$ we have
\begin{equation}\label{rev-hold-deg}
\begin{aligned}
\dashiint_{Q_{s}^{\theta_{s}}}\abs{Du^m}^2 dxdt\leq c\frac{\theta_s^\frac{m+1}{1-m}}{s}\leq& 
c_\epsilon\bigg(\dashiint_{Q_{3s,3r(s)}}\abs{Du^m}^{2\vartheta_1} dxdt
\bigg)^{\frac{1}{\vartheta_1}}\\
&+c_\epsilon\,\dashiint_{Q_{3s,3r(s)}} r^2(s)|f|^2\,dxdt.
\end{aligned}
\end{equation}
First observe, that by Lemma~\ref{lm:intr-cons}, the cylinders $Q_{bs}^{\theta_{bs}}$ are intrinsic for $b\in [1,2]$. 
By \eqref{scal:3} of Lemma~\ref{lem:scal} and the best constant property, we find that for all $b\in [1,2]$ 
\[
\theta_{bs}^\frac{m+1}{1-m}\leq \frac{c}{\epsilon}\dashiint_{Q_{bs}^{\theta_{bs}}}\abs{u^{m}-\Mean{u^{m}}_{Q_{bs}^{\theta_{2s}}}}^\frac{m+1}m\,dxdt.
\]
Moreover, \eqref{scal:7}--\eqref{scal:2} of Lemma~\ref{lem:scal} and the definition of $\theta_s=\frac{s}{(r(s))^2}$ imply
\begin{align*}
\dashiint_{Q_{2s,\sqrt{4s/\theta_s}}} u^{m+1}\, dxdt&=\dashiint_{Q_{2s,2r(s)}} u^{m+1}\, dxdt\leq c\,\dashiint_{Q_{\tilde{2}s,r(\tilde{2} s)}} u^{m+1}\, dxdt\\
&\leq c\,\left[\theta_{\tilde{2}s}\right]^{\frac{1+m}{1-m}}=c\,\left[\frac{\tilde{2}s}{(r(\tilde{2}s))^2}\right]^{\frac{1+m}{1-m}}\leq c\,\left[\frac{s}{(r(s))^2}\right]^{\frac{1+m}{1-m}}\leq c\,\left[\frac{\theta_{s}}2\right]^{\frac{1+m}{1-m}}
\end{align*}
Hence, the cylinder $Q_{2s,\sqrt{4s/\theta_s}}$ satisfies the assumption of Corollary~\ref{cor:deg-subint}; relying on it and  using the definition of $\theta_s=\frac{s}{(r(s))^2}$ again, yields
\begin{align*}
\frac{1}{s}\esssup_{t \in \Lambda_{s}} &\,\dashint_{B_{\sqrt{s/\theta_s}}} u^{m+1}\,dx
+ \dashiint_{Q_{s,\sqrt{s/\theta_s}}}  \abs{Du^{m}}^{2}\,dxdt \leq c\frac{\theta_{s}^\frac{m+1}{1-m}}{s}.
\end{align*}
This proves the first part of \eqref{rev-hold-deg}.
Now, we choose $\eta$ as in Lemma~\ref{lem:time}, assuming it has support in $B_{3r(s)}$ and it is constant in $B_{2r(s)}$, and
define the constant
\[
\lambda_o^m=\bigg(\dashint_{\Lambda_{2s}}\Mean{u(t)}^{\eta^2}_{B_{3r(s)}}\, dt\bigg)^{m}.
\]
We can estimate $\lambda_o$ from below using the reverse H\"older inequality of Corollary~\ref{RHI-for-u} and the 
intrinsic nature of $Q_s^{\theta_s}$ 
to find 
\begin{align*}
\lambda_o^m&\geq \bigg(\dashint_{\Lambda_{2s}}\frac1{\abs{B_{3r(s)}}}\int_{B_{2r(s)}}u\, dxdt\bigg)^{m}\geq c\,\bigg(\dashiint_{Q_{2s,2r(s)}}u\ dxdt\bigg)^m
\\
&\geq  c\,\bigg(\dashiint_{Q_{s,r(s)}}u^{m+1}\ dxdt\bigg)^\frac{m}{m+1}
= c\,\theta_{s}^\frac{m}{1-m}.
\end{align*}
We estimate $\theta_s$ by using the best constant property, \eqref{meanchange} and \eqref{meanchange2}. 
For some $\alpha\in(0,1)$ to be chosen, we have
\begin{align*}
&\theta_{s}^\frac{m+1}{1-m}=c\,\dashiint_{Q_s^{\theta_s}}u^{m+1}\,dxdt
\leq \frac{c}{\epsilon}\,\dashiint_{Q_{s}^{\theta_{s}}}\abs{u^{m}-\Mean{u^{m}}_{Q_{s}^{\theta_{s}}}}^\frac{m+1}m\,dxdt
\\
&\quad\leq 
\frac{c}{\epsilon}\bigg(\sup_{t\in \Lambda_{s}}\dashint_{B_{\sqrt{s/\theta_s}}}u^{m+1}\, dx\bigg)^{1-\alpha} \dashint_{\Lambda_s}\bigg(\dashint_{B_{\sqrt{s/\theta_{s}}}}\abs{u^{m}-\lambda_o^m}^\frac{m+1}m\,dx \bigg)^\alpha\, dt
\\
&\quad\leq 
\frac{c}{\epsilon}\bigg(\sup_{t\in \Lambda_{s}}\dashint_{B_{\sqrt{s/\theta_s}}}u^{m+1}\, dx\bigg)^{1-\alpha} \dashint_{\Lambda_s}\bigg(\dashint_{B_{3r(s)}}\abs{u^{m}-\Mean{u(t)}_{B_{3r(s)}}^m}^\frac{m+1}m\,dx 
\\
&\qquad +\Bigabs{\Big(\Mean{u(t)}^{\eta^2}_{B_{3r(s)}}\Big)^{m}-\lambda_o^m}^\frac{m+1}{m}\,\bigg)^\alpha dt.
\end{align*}
By the intrinsic nature of $Q_{2s,\sqrt{4s/\theta_s}}$, Corollary~\ref{cor:deg-subint} and Lemma~\ref{lm:intr-cons}, this implies that
\begin{align*}
&\Big(\frac{\theta_{s}^\frac{m+1}{1-m}}{s}\Big)^\alpha\leq\frac{c_\epsilon}{s^\alpha} \dashint_{\Lambda_s}\bigg(\dashint_{B_{3r(s)}}\abs{u^{m}-\Mean{u(t)}_{B_{3r(s)}}^m}^\frac{m+1}m\,dx \bigg)^\alpha\,dt
\\
&\qquad +\frac{c_\epsilon}{s^\alpha}\bigg(\dashint_{\Lambda_s} \Bigabs{\Big(\Mean{u(t)}^{\eta^2}_{B_{3r(s)}}\Big)^{m}-\lambda_o^m}^\frac{m+1}{m}\,dt\bigg)^\alpha=:(I)+(II)
\end{align*}
By the Sobolev-\Poincare inequality there exists $\vartheta_1\in (\frac12,1)$ such that
\begin{align*}
\dashint_{B_{3r(s)}}\abs{u^{m}-\Mean{u(t)}_{B_{3r(s)}}^m}^\frac{m+1}m\,dx
&\leq c\,\dashint_{B_{3r(s)}}\abs{u^{m}-\Mean{u^m(t)}_{B_{3r(s)}}}^\frac{m+1}m\,dx
\\
&\leq c(r(s))^\frac{m+1}{m}\bigg(\dashint_{B_{3r(s)}}\abs{D u^m}^{2\vartheta_1}\, dx\bigg)^\frac{m+1}{2\vartheta_1 m}.
\end{align*}
Now we choose
\[
\alpha=\frac{2\vartheta_1 m}{m+1},
\]
and we conclude that 
\begin{align*}
(I)&\leq \frac{c_\epsilon (r(s))^{2\vartheta_1}}{s^\alpha}\dashiint_{Q_{3s,3r(s)}}\abs{Du^m}^{2\vartheta_1}\,dxdt=\bigg(\frac{c_\epsilon (r(s))^{2}}{s^\frac{\alpha}{\vartheta_1}}\bigg(\dashiint_{Q_{3s,3r(s)}}\abs{Du^m}^{2\vartheta_1}\,dxdt\bigg)^\frac{1}{\vartheta_1}\bigg)^{\vartheta_1}.
\end{align*}
On the other hand, the Fundamental Theorem of Calculus, 
Lemma~\ref{lem:time} and the fact that $r^2(s)\theta_s=s$ yield
\begin{align*}
\Bigabs{\big(\Mean{u(t)}_{B_{3r(s)}}^{\eta^2}\big)^{m}-\lambda_o^m}
&\sim  \big(\Mean{u(t)}_{B_{3r(s)}}^{\eta^2}+\lambda_o)^{m-1}\Bigabs{\Mean{u(t)}_{B_{3r(s)}}^{\eta^2}-\lambda_o}
\\
&\leq c\,\lambda_o^{m-1} \,\Bigabs{\dashint_{\Lambda_{2s}}[\Mean{u(t)}_{B_{3r(s)}}^{\eta^2}-\Mean{u(\tau)}_{B_{3r(s)}}^{\eta^2}]\, d\tau}
\\
&\leq \frac{c}{\theta_s}\,\dashint_{\Lambda_{2s}}\abs{\Mean{u(t)}_{B_{\sqrt{9s/\theta_{s}}}}^{\eta^2}-\Mean{u(\tau)}_{B_{\sqrt{9s/\theta_{s}}}}^{\eta^2}}\, d\tau
\\
&\leq  \frac{cs}{\theta_s}\bigg(\frac1{r(s)}\dashiint_{Q_{3s,3r(s)}}\abs{D  u^m}\, dxdt+\dashiint_{Q_{3s,3r(s)}}\abs{f}\, dxdt\bigg).
\end{align*}
This implies that
\begin{align*}
\Big(\frac{\theta_{s}^\frac{m+1}{1-m}}{s}\Big)^\frac{\alpha}{\vartheta_1}&\leq c(I)^\frac{1}{\vartheta_1}+ c(II)^\frac{1}{\vartheta_1}
\\
&\leq \frac{c_\epsilon (r(s))^{2}}{s^\frac{\alpha}{\vartheta_1}}\bigg(\dashiint_{Q_{3s,3r(s)}}\abs{Du^m}^{2\vartheta_1}\,dxdt\bigg)^\frac{1}{\vartheta_1}
\\
&\quad
+\frac{c_\epsilon}{s^\frac{\alpha}{\vartheta_1}}\bigg(\frac{s}{r(s)\theta_s}\bigg(\dashiint_{Q_{3s,3r(s)}}\abs{D u^m}\,dxdt+\dashiint_{Q_{3s,3r(s)}} r(s)\abs{f}\,dxdt\bigg)\bigg)^{\frac{m+1}{m}\frac{\alpha}{\vartheta_1}}\\
&= \frac{c_\epsilon s}{s^\frac{\alpha}{\vartheta_1}\theta_s}\bigg(\dashiint_{Q_{3s,3r(s)}}\abs{Du^m}^{2\vartheta_1}\,dxdt\bigg)^\frac{1}{\vartheta_1}
\\
&\quad
+\frac{c_\epsilon}{s^\frac{\alpha}{\vartheta_1}}\Big(\frac{s}{r(s)\theta_s}\Big)^2\bigg(\dashiint_{Q_{3s,3r(s)}}\abs{D  u^m}\,dxdt+\dashiint_{Q_{3s,3r(s)}} r(s)\abs{f}\, dxdt\bigg)^{2}
\\
&= \frac{c_\epsilon s}{s^\frac{\alpha}{\vartheta_1}\theta_s}\bigg(\dashiint_{Q_{3s,3r(s)}}\abs{Du^m}^{2\vartheta_1}\,dxdt\bigg)^\frac{1}{\vartheta_1}
\\
&\quad
+\frac{c_\epsilon}{s^{\frac{\alpha}{\vartheta_1}}}\frac{s}{\theta_s}\bigg(\dashiint_{Q_{3s,3r(s)}}\abs{D  u^m}\,dxdt+\dashiint_{Q_{3s,3r(s)}} r(s)\abs{f}\,dxdt\bigg)^{2}.
\end{align*}
By Jensen's inequality, this implies that
\begin{equation}
\begin{aligned}
\frac{\theta_s^\frac{m+1}{1-m}}{s}=\theta_s^\frac{2m}{1-m}\frac{\theta_s}{s}=\Big(\frac{\theta_{s}^\frac{m+1}{1-m}}{s}\Big)^\frac{\alpha}{\vartheta_1}\frac{s^{\frac{\alpha}{\vartheta_1}}\theta_s}{s}\leq &c_\epsilon\bigg[\dashiint_{Q_{3s,3r(s)}}\abs{D  u^m}^{2\vartheta_1}\,dxdt\bigg]^\frac{1}{\vartheta_1}\\
&+c_\epsilon\,\dashiint_{Q_{3s,3r(s)}} r(s)^{2}\abs{f}^{2}\,dxdt.
\end{aligned}
\end{equation}
\end{proof}
\subsection{The Non-Degenerate Regime}
In the following we assume that the opposite of $\eqref{eq:deg1}$ holds. Namely,
we consider an intrinsic cylinder $Q_{2s}^{\theta_{2s}}\equiv Q_{2s,\sqrt{2s/\theta_{2s}}}\equiv Q_{2s,r(2s)}\subset E_T$,
and we assume that
\begin{align}
\label{eq:ndeg} \bigg(\dashiint_{Q_{2s}^{\theta_{2s}}}\abs{u^m-\Mean{u^m}_{Q_{2s}^{\theta_{2s}}}}^\frac{m+1}{m}dxdt\bigg)^\frac1{m+1}\leq \epsilon \bigg(\dashiint_{Q_{2s}^{\theta_{2s}}}u^{m+1}dxdt\bigg)^\frac1{m+1}.
\end{align}
We denote this condition the \emph{Non-Degenerate Regime}; heuristically, it implies that $u$ is close to a solution of the linear heat equation.
\begin{proposition}\label{Prop:rev-hol-non-deg}
Let $Q_{2s}^{\theta_{2s}}$ be one of the intrinsic cylinders constructed in Lemma~\ref{lem:scal} and assume it satisfies \eqref{eq:ndeg}. Then, there exist $\vartheta_2\in (0,1)$ and a constant $c_\epsilon$ that depends just on $\epsilon,\ \vartheta_2$ and the data, such that
\begin{equation*}
\begin{aligned}
\dashiint_{Q_{s}^{\theta_{2s}/2}}\abs{Du^m}^2\,dxdt&\leq
c_\epsilon\bigg(\dashiint_{Q_{2s}^{\theta_{2s}}}\abs{Du^m}^{2\vartheta_2}\,dxdt\bigg)^\frac{1}{\vartheta_2}+c_\epsilon\dashiint_{Q_{2s}^{\theta_{2s}}}r^2(2s)|f|^2\,dxdt
\\
&\le c_\epsilon\bigg(\dashiint_{Q_{2s}^{\theta_{2s}}}\abs{Du^m}^{2\vartheta_2}\,dxdt\bigg)^\frac{1}{\vartheta_2}+c_\epsilon\,\sup_{Q_{2s}^{\theta_{2s}}}[r^2(2s)|f|^2].
\end{aligned}
\end{equation*}
\end{proposition}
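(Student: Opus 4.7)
The plan is to exploit that in the non-degenerate regime the solution $u$ stays close to a suitable mean value $\lambda$, so the equation essentially linearises around $\lambda$ and behaves like a heat equation with constant coefficient $\lambda^{m-1}$. The proof will run roughly parallel to that of Proposition~\ref{Prop:rev-hold-deg}, but with this linearisation replacing the role played there by the $L^{(m+1)/m}$-lower bound of the oscillation.

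First, I would pick $\lambda$ as a weighted mean of $u$ over $Q_{2s}^{\theta_{2s}}$, say $\lambda=\dashint_{\Lambda_{2s}}(u(t))^{\eta^2}_{B_{r(2s)}}\,dt$ with a spatial cut-off $\eta$ as in Lemma~\ref{lem:time}. By Jensen, the best constant property \eqref{bcp}, and Lemma~\ref{lem:trick}, one obtains $\lambda^m\sim \Mean{u^m}_{Q_{2s}^{\theta_{2s}}}$ and $\lambda^{m+1}\sim\dashiint u^{m+1}$. The intrinsic character of $Q_{2s}^{\theta_{2s}}$ then yields $\lambda\sim\theta_{2s}^{1/(1-m)}$, hence $r^2(2s)\sim s\lambda^{m-1}$, and the non-degenerate condition \eqref{eq:ndeg} becomes
\[
\dashiint_{Q_{2s}^{\theta_{2s}}}|u^m-\lambda^m|^{(m+1)/m}\,dxdt\le c\,\epsilon^{(m+1)/m}\lambda^{m+1}.
\]
Moreover, Proposition~\ref{Prop:B:4:1}, applied on $Q_{2s}^{\theta_{2s}}$ (which is intrinsic by Lemma~\ref{lm:intr-cons}, and whose supremum assumption on $f$ is exactly the one admitted by the conclusion), furnishes the pointwise bound $u\le C\lambda$ on a slightly smaller intrinsic cylinder. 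This is the linearisation prerequisite and gives $|u-\lambda|\le C\lambda$ together with $|u^m-\lambda^m|\le C\lambda^m$.

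Next, I would apply the energy estimate \eqref{PMS-eq: energy} with truncation constant $c=\lambda$, bridging the inner $Q_s^{\theta_{2s}/2}=Q_{s,r(2s)}$ and the outer $Q_{2s}^{\theta_{2s}}=Q_{2s,r(2s)}$ (which share the spatial radius) through a fixed number of standard cylinder pairs. Using $\theta_{2s}r^2(2s)\sim s$, $\lambda^{1-m}\sim\theta_{2s}$, and the pointwise linearisation $|u-\lambda|\sim\lambda^{1-m}|u^m-\lambda^m|$ on $\set{u>\lambda/2}$, the first and second terms on the right-hand side of \eqref{PMS-eq: energy} both collapse to a multiple of $r^{-2}(2s)\dashiint|u^m-\lambda^m|^2$, leading to
\[
\dashiint_{Q_s^{\theta_{2s}/2}}|Du^m|^2\,dxdt\le\frac{C}{r^2(2s)}\dashiint_{Q_{2s}^{\theta_{2s}}}|u^m-\lambda^m|^2\,dxdt+C\dashiint_{Q_{2s}^{\theta_{2s}}}r^2(2s)|f|^2\,dxdt.
\]
To insert $|Du^m|^{2\vartheta_2}$ on the right I would then use spatial Sobolev-Poincaré on each time slice, $\dashint_{B_{r(2s)}}|u^m-\overline{u^m}(t)|^2\,dx\le c\,r^2(2s)(\dashint|Du^m|^{2\vartheta_2})^{1/\vartheta_2}$ for some $\vartheta_2\in(\max\set{\tfrac12,\tfrac{n}{n+2}},1)$, integrate in time, and push the $(\cdot)^{1/\vartheta_2}$ past the time average by Jensen (admissible since $1/\vartheta_2>1$). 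The drift $|\overline{u^m}(t)-\lambda^m|$ is controlled through Lemma~\ref{lem:time} applied to $u$ and converted to $u^m$ via the same linearisation; a further application of Jensen (using $2\vartheta_2>1$) recasts the resulting $|Du^m|$-integrals into $(\dashiint|Du^m|^{2\vartheta_2})^{1/\vartheta_2}$ plus $f$-terms, which combined with the previous display gives the claim.

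The main obstacle will be the linearisation step. Its pointwise form $|u-\lambda|\sim\lambda^{1-m}|u^m-\lambda^m|$ requires $u$ to be comparable to $\lambda$ from both sides, but Proposition~\ref{Prop:B:4:1} only produces an upper bound $u\le C\lambda$. The way out is to decompose the integrands into $\set{u\le\lambda/2}$ and $\set{u>\lambda/2}$: on the latter the linearisation applies directly, while on the former one uses $|u^m-\lambda^m|\sim\lambda^m$ and shows that the set has $L^{(m+1)/m}$-small measure by \eqref{eq:ndeg} and Chebyshev, so its contribution is absorbed by the non-degenerate factor $\epsilon^{(m+1)/m}$ into the constant $c_\epsilon$. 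A secondary issue is that Lemma~\ref{lem:time} estimates the time oscillation of $(u)^{\eta^2}_{B}$ rather than of $(u^m)_B$; transferring the estimate to the $u^m$-mean requires the same split combined with the bound $|a^m-b^m|\le |a-b|^m$, and is essentially parallel to the delicate time-mean manipulations carried out at the end of the proof of Proposition~\ref{Prop:rev-hold-deg}.
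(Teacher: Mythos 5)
Your overall strategy of linearising around $\lambda\sim\theta_{2s}^{1/(1-m)}$ in the non-degenerate regime is a legitimate starting point and is close in spirit to the paper's, but there are two points where your argument departs from what actually works, and one of them is a genuine gap.

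First, the Jensen step. After the slice-wise Sobolev--Poincar\'e inequality
$\frac{1}{r^2}\dashint_B|u^m-\overline{u^m}(t)|^2\,dx\le c\big(\dashint_B|Du^m|^{2\vartheta_2}\,dx\big)^{1/\vartheta_2}$
and integration in time, you are left with
$\dashint_{\Lambda_s}\big(\dashint_B|Du^m|^{2\vartheta_2}\,dx\big)^{1/\vartheta_2}\,dt$,
and you propose to ``push $(\cdot)^{1/\vartheta_2}$ past the time average by Jensen.'' Since $1/\vartheta_2>1$, Jensen goes the opposite way: it gives $\dashint_{\Lambda_s}g^{1/\vartheta_2}\ge\big(\dashint_{\Lambda_s}g\big)^{1/\vartheta_2}$, not $\le$. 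This is exactly the classical obstacle in parabolic reverse H\"older estimates, and it cannot be waved away. The paper circumvents it by the interpolation device one sees in the proof: after the energy estimate, H\"older in time pulls out the essential supremum $I(b)$ to the power $1-\alpha$, a slice quantity raised to the power $\alpha$ is left inside, Young's inequality is applied with the parameter $\alpha$ tuned so that $\frac{2m}{\alpha(m+1)}=\frac{1}{\vartheta_2}$, and then the sup term is absorbed via the iteration Lemma~6.1 of \cite{Giu03}. Without some version of this mechanism your argument does not close.

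Second, you invoke Proposition~\ref{Prop:B:4:1} to get $u\le C\lambda$ on the cylinder, and you build the whole linearisation on top of it. The paper deliberately avoids the $L^\infty$ bound (and hence Corollary~\ref{RHI-for-u}) in the non-degenerate regime -- see Remark~\ref{rem:revH}: the only place the $L^\infty$ hypothesis on $f$ enters is the reverse H\"older inequality for $u$, used in the \emph{degenerate} case. Instead, the paper uses the one-sided inequality $(u+\lambda)^{m-1}\le\lambda^{m-1}$ (which holds for all $u\geq 0$, no $L^\infty$ bound needed) to convert $|u^m-\lambda^m|^2$ into $\lambda^{m-1}|u^m-\lambda^m||u-\lambda|$, and then a split into $\{u\ge2\lambda\}$, $\{u<2\lambda\}$ together with H\"older to arrive at $|u^m-\lambda^m|^{(m+1)/m}$ (not $|u^m-\lambda^m|^2$). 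Using $L^\infty$ as you do would make the proof more rigid and less amenable to the extensions the authors flag (signed and vector-valued solutions). Also note that the paper's split serves to convert cross-terms into $(m+1)/m$-powers; it is not the same as yours.

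Finally, even if you accept the $L^\infty$ bound, your Chebyshev step on $\{u\le\lambda/2\}$ is both unnecessary and incorrect. Once you have $0\le u\le C\lambda$, the two-sided equivalence $|u^m-\lambda^m|\sim\lambda^{m-1}|u-\lambda|$ holds on the \emph{entire} range, including $\{u\le\lambda/2\}$, so no split is needed. If you nonetheless attempt the Chebyshev argument, you produce a term of order $\epsilon^{m+1}\theta_{2s}^{(m+1)/(1-m)}/s$, which is \emph{not} controlled by $\big(\dashiint|Du^m|^{2\vartheta_2}\big)^{1/\vartheta_2}$ in the non-degenerate regime (precisely there, the gradient may be much smaller than $\theta^{(m+1)/(1-m)}/s$), so that term cannot be absorbed and your inequality would fail.
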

\begin{proof}
We assume $Q_{2s}^{\theta_{2s}}$ is an intrinsic cylinder, where the non-degenerate regime holds. 
In this context, we will not make use of the reverse H\"older inequality of Corollary~\ref{RHI-for-u}. 

We may suppose that for some $\delta>0$ sufficiently small
\[
(r(2s))^2\dashiint_{Q_{2s}^{\theta_{2s}}}\abs{f}^2\,dxdt\le \delta\dashiint_{Q_{s}^{\theta_{2s}/2}}\abs{Du^m}^2\,dxdt;
\]
indeed, otherwise, there is nothing to prove.

Fix $\sigma \in [s,2s]$: by Lemma~\ref{lem:scal?} we know that $\theta_\sigma\sim \theta_{2s}$ and $r(\sigma)\sim r(2s)$. 
For the sake of simplicity, in the following we write 
\[
r=\frac{r(2s)}{2},\ \ \text{ and }\ \ \theta=\theta_{2s}. 
\]
Without further notice, we use the fact that $\theta r^2 = \frac{s}{2}$.
Moreover, by Lemma~\ref{lem:trick} we find 
\begin{align*}
\bigg(\dashiint_{Q_{2s,2r}}\abs{u^m-\Mean{u}_{Q_{2s,2r}}^m}^\frac{m+1}{m}\,dxdt\bigg)^\frac1{m+1}
\sim& 
\bigg(\dashiint_{Q_{2s,2r}}\abs{u^m-\Mean{u^m}_{Q_{2s,2r}}}^{\frac{m+1}{m}}\,dxdt\bigg)^\frac1{m+1}
\\
\leq&
c\epsilon\,\bigg(\dashiint_{Q_{2s,2r}}u^{m+1}dxdt\bigg)^\frac1{m+1}
\\
\leq&
c\epsilon \bigg(\dashiint_{Q_{2s,2r}}\abs{u^m-\Mean{u^m}_{Q_{2s,2r}}}^\frac{m+1}{m}dxdt\bigg)^\frac1{m+1}\\
&+c\epsilon\Mean{u^m}_{Q_{2s,2r}}^\frac1m 
\\
\leq&
c\epsilon^2 \bigg(\dashiint_{Q_{2s,2r}}u^{m+1}\,dxdt\bigg)^\frac1{m+1}+c\epsilon\Mean{u^m}_{Q_{2s,2r}}^\frac1m 
\end{align*}
which implies
\[
\Mean{u^m}_{Q_{2s,2r}}^\frac1m\leq \frac{c\epsilon}{1-c\epsilon^2}\bigg(\dashiint_{Q_{2s,2r}}u^{m+1}\,dxdt\bigg)^\frac1{m+1}, 
\]
and 
\begin{align*}
\bigg(\dashiint_{Q_{2s,2r}}\abs{u^m-\Mean{u}_{Q_{2s,2r}}^m}^\frac{m+1}{m}\,dxdt\bigg)^\frac1{m+1}
&\sim 
\bigg(\dashiint_{Q_{2s,2r}}\abs{u^m-\Mean{u^m}_{Q_{2s,2r}}}^\frac{m+1}{m}\,dxdt\bigg)^\frac1{m+1}
\\
&\leq 
\frac{c\epsilon}{1-c\epsilon}\Mean{u^m}_{Q_{2s,2r}}^\frac1m\leq  \frac{c\epsilon}{1-c\epsilon}\Mean{u}_{Q_{2s,2r}}.
\end{align*}
Next, let $\tilde{\eta}$ to be a constant-in-time cut off function, such that $\tilde\eta\in C^\infty_0((B_{2r}),[0,1])$, $\tilde\eta(x)\equiv 1$ for $x\in B_{r}$, and $\abs{D \tilde\eta}_\infty\leq\frac{c}{r}$. 
We set $\displaystyle \eta=\frac{\tilde\eta}{\abs{B_{r}}}$, and define
\[
\lambda=\Mean{u}^{\eta}, \lambda(t)=\Mean{u(t)}^{{\eta}}\text{ and }e^m=\Mean{u^m}^{\eta},e^m(t)=\Mean{(u(t))^m}^{\eta}.
\]
By Lemma~\ref{lm:intr-cons} 
we find that 
\[
\Mean{u^{m+1}}_{Q_{2s,2r}}^\frac1{m+1}\sim \Mean{u^{m+1}}_{Q_{s,r}}^\frac1{m+1}\ \ \text{ and }\ \ \Mean{u}_{Q_{2s,2r}}\sim \Mean{u}_{Q_{s,r}},
\]
which directly implies that  
\[
\Mean{u^{m+1}}_{Q_{2s,2r}}\sim  \Mean{u^{m+1}}^{\eta},\ \ \text{ and }\ \ \Mean{u}_{Q_{2s,2r}}\sim 
\Mean{u}^{\eta}.
\]
We conclude that
\begin{align*}
\lambda \sim \Mean{u^m}_{Q_{as,ar}}^\frac1m\sim \Mean{u^{m+1}}_{Q_{2s,2r}}^\frac1{m+1}\sim \theta_{2s}^\frac{1}{1-m}
\end{align*}
 for all $a\in [\frac12,1]$.
For any $1/2\leq a <b\leq 1$ and $\theta_{2s}\sim\lambda^{1-m}>0$ we find by the energy estimate of Lemma~\ref{lem:osc-energy} with $c=\lambda$
\begin{align*}
\begin{aligned}
I(a)+II(a):=&
\frac{\lambda^{m-1}}{r^2}\esssup_{t \in \Lambda_{as}} 
\bigg(\dashint_{B_{ar}} \abs{u^{m}-\lambda^{m}}^\frac{m+1}{m}\,dx\\
& +\dashint_{B_{ar}} \abs{u^{m}-\lambda^{m}}\abs{u-\lambda}\,dx\bigg)
+\dashiint_{Q_{as,ar}}  \abs{Du^{m}}^{2}\,dxdt \\
\le& \frac{c\lambda^{m-1}}{r^2(b-a)} \dashiint_{Q_{bs,br}} \abs{u^{m}-\lambda^{m}} \abs{u-\lambda}\,dxdt\\  
&+ \frac{c}{(b-a)^2r^2} \dashiint_{Q_{bs,br}}\abs{u^{m}-\lambda^{m}}^2\,dxdt+c\,\dashiint_{Q_{bs,br}}[r^2|f|^2]\,dxdt\\
\le& \frac{c\lambda^{m-1}}{r^2(b-a)} \dashiint_{Q_{bs,br}} \abs{u^{m}-\lambda^{m}} \abs{u-\lambda}\,dxdt\\  
&+ \frac{c}{(b-a)^2r^2} \dashiint_{Q_{bs,br}}\abs{u^{m}-\lambda^{m}}^2\,dxdt
+c\delta\,\dashiint_{Q_{as,ar}}  \abs{Du^{m}}^{2}\,dxdt
\end{aligned}
\end{align*}
Hence, by fixing $\delta$ sufficiently small, by absorption we find that 
\begin{align*}
\begin{aligned}
I(a)+II(a)
\le& \frac{c\lambda^{m-1}}{r^2(b-a)} \dashiint_{Q_{bs,br}} \abs{u^{m}-\lambda^{m}} \abs{u-\lambda}\,dxdt\\
&+ \frac{c}{(b-a)^2r^2} \dashiint_{Q_{bs,br}}\abs{u^{m}-\lambda^{m}}^2\,dxdt
\end{aligned}
\end{align*}
We continue using Young's inequality and the fact that
\[
\abs{u^m-\lambda^m}^2\sim (u+\lambda)^{2m-2}\abs{u-\lambda}^2 \leq \lambda^{m-1}\abs{u-\lambda}^2 (u+\lambda)^{m-1}\sim \lambda^{m-1}\abs{u-\lambda}\abs{u^m-\lambda^{m}},
\]
and find
\begin{align*}
\begin{aligned}
I(a)+II(a)&\leq \frac{c}{(b-a)^2}\esssup_{t \in \Lambda_{bs}}\bigg(\frac{\lambda^{m-1}}{r^2}\dashint_{B_{br}} \abs{u^{m}-\lambda^{m}} \abs{u-\lambda}\,dx\bigg)^{1-\alpha}
\\
&\quad \times \dashint_{\Lambda_{bs}}\bigg[\bigg(\frac{\lambda^{m-1}}{r^2}\dashint_{B_{br}} \abs{u^{m}-\lambda^{m}} \abs{u-\lambda}\,dx\bigg)^\alpha + \bigg(\frac1{r^2}\dashint_{B_{br}} \abs{u^{m}-\lambda^{m}}^2\,dx\bigg)^\alpha\bigg]\,dt  
\\
&=(I(b))^{1-\alpha} \frac{c}{(b-a)^2}\dashint_{\Lambda_{bs}}\bigg(\bigg(\frac{\lambda^{m-1}}{r^2}\dashint_{B_{br}} \abs{u^{m}-\lambda^{m}} \abs{u-\lambda}\,dx\bigg)^\alpha\\
&\quad + \bigg(\frac1{r^2}\dashint_{B_{br}} \abs{u^{m}-\lambda^{m}}^2\,dx\bigg)^\alpha\bigg) dt 
\\
&\leq \frac12 (I(b))+\frac{c}{(b-a)^\frac{2}{\alpha}} \bigg(\dashint_{\Lambda_{bs}}\bigg(\bigg(\frac{\lambda^{m-1}}{r^2}\dashint_{B_{br}} \abs{u^{m}-\lambda^{m}} \abs{u-\lambda}\,dx\bigg)^\alpha\\
&\quad + \bigg(\frac1{r^2}\,\dashint_{B_{br}} \abs{u^{m}-\lambda^{m}}^2\,dx\bigg)^\alpha\bigg) dt\bigg)^\frac{1}{\alpha}
\\
&=:    \frac12 I(b)+\frac{c}{(b-a)^\frac{2}{\alpha}}(III).
\end{aligned}
\end{align*}
Since we can interpolate the essential supremum to the left-hand side, we are left to estimate $(III)$.
Observe that
\begin{align*}
\dashint_{B_{2r}} \abs{u^{m}-\lambda^{m}} \abs{u-\lambda}\eta&\,dx
=\dashint_{B_{2r}} \chi_{\set{u\geq 2\lambda}}\abs{u^{m}-\lambda^{m}} \abs{u-\lambda}\eta\,dx\\
& + \dashint_{B_{2r}} \chi_{\set{u < 2\lambda}}\abs{u^{m}-\lambda^{m}} \abs{u-\lambda}\eta\,dx
\\
\leq&\dashint_{B_{2r}} \chi_{\set{u\geq 2\lambda}}u^{m+1}\eta\,dx\\ 
&+ \lambda^{1-m}\dashint_{B_{2r}} \chi_{\set{u < 2\lambda}}\abs{u^{m}-\lambda^{m}} \lambda^{m-1}\abs{u-\lambda}\eta\,dx
\\
\leq& c\bigg(\dashint_{B_{2r}} u^{m+1}\, dx\bigg)^\frac{1-m}{m+1}\bigg(\dashint_{B_{2r}} \chi_{\set{u\geq 2\lambda}}\abs{u^{m}-\lambda^m}^\frac{m+1}{m}\eta\,dx\bigg)^\frac{2m}{m+1} 
\\
&+c\,\lambda^{1-m}\dashint_{B_{2r}} \chi_{\set{u < 2\lambda}}\abs{u^{m}-\lambda^{m}} (\lambda+u)^{m-1}\abs{u-\lambda}\eta\,dx
\\
\leq&c\bigg(\lambda^{1-m}+\bigg(\dashint_{B_{2r}} u^{m+1}\, dx\bigg)^\frac{1-m}{m+1}\bigg)\bigg(\dashint_{B_{2r}} \abs{u^{m}-\lambda^m}^\frac{m+1}{m}\eta\,dx\bigg)^\frac{2m}{m+1}. 
\end{align*}
This allows to conclude via H\"older's inequality (using $\frac{1-m}{m+1}+\frac{2m}{m+1}=1$) and Jensen's inequality  that
\begin{align*}
(III)^\alpha&\leq c
\dashint_{\Lambda_{s}}\bigg(\bigg(\frac{\lambda^{m-1}}{r^2}\dashint_{B_{2r}} \abs{u^{m}-\lambda^{m}} \abs{u-\lambda}\eta\,dx\bigg)^\alpha + \bigg(\frac1{r^2}\dashint_{B_{2r}} \abs{u^{m}-\lambda^{m}}^2\eta \,dx\bigg)^\alpha\bigg) dt 
\\
&\quad\leq 
c  \bigg(\dashint_{\Lambda_{s}} \bigg(\frac1{r^{\frac{m+1}m}}\dashint_{B_{2r}} \abs{u^{m}-\lambda^{m}}^\frac{m+1}{m}\eta\,dx\bigg)^\alpha dt\bigg)^\frac{2m}{m+1} 
\end{align*}
Next we may subtract the space mean values and find by Lemma~\ref{lem:trick}
\begin{align*}
\bigg(\dashint_{B_{2r}} \abs{u^{m}-\lambda^m}^\frac{m+1}{m}\eta\,dx\bigg)^\frac{m}{m+1}
&\leq 
c\bigg(\dashint_{B_{2r}} \abs{u^{m}-\lambda^m(t)}^\frac{m+1}{m}\eta\,dx\bigg)^\frac{m}{m+1}+c\abs{\lambda^m(t)-\lambda^m}
\\
&\leq 
c\bigg(\dashint_{B_{2r}} \abs{u^{m}-e^m(t)}^\frac{m+1}{m}\eta\,dx\bigg)^\frac{m}{m+1}+c\abs{\lambda^m(t)-\lambda^m}
\\
&=(i)+(ii).
\end{align*}
Using \eqref{meanchange2} yields
\begin{align*}
\frac{(i)^2}{r^2}\leq \frac{c}{r^2}\bigg(\dashint_{B_{2r}} \abs{u^{m}-e^m(t)}^\frac{m+1}{m}\eta\,dx\bigg)^\frac{2m}{m+1}
\leq c\bigg(\dashint_{B_{2r}} \abs{D  u^m}^{2\vartheta_2}\,dx\bigg)^\frac{1}{\vartheta_2}.
\end{align*}
We further estimate using Lemma~\ref{lem:time}, to get that
\begin{align*}
\frac{(ii)}{r}&=\frac{1}{r}\abs{\lambda^m-\lambda^m(t)}\sim \frac{1}{r}(\lambda+\lambda(t))^{m-1}\abs{\lambda(t)-\lambda}\\
&\leq  \frac{c\lambda^{m-1}s}{r^2(s)}\dashiint_{Q_{2s,2r}}\abs{D  u^m}\,dxdt\leq c\dashiint_{Q_{2s,2r}}\abs{D  u^m}\,dxdt.
\end{align*}
By choosing $\alpha =\frac{2m\,\vartheta_2}{m+1}$ we find
\begin{align*}
(III)&\leq c  \bigg(\dashint_{\Lambda_{s}} \bigg(\dashiint_{Q_{2s,2r}}\abs{D  u^m}\,dxdt\bigg)^{\alpha}+\dashint_{B_{2r}} \abs{Du^m}^{2\vartheta_2}\,dxdt\bigg)^\frac{ 2m}{\alpha(m+1)}
\\
&\leq c \bigg(\dashiint_{Q_{2s,2r}}\abs{D  u^m}^{2\vartheta_2}\,dxdt\bigg)^\frac{1}{\vartheta_2}.
\end{align*}
Hence, the result follows by the interpolation result of \cite[Lemma~6.1]{Giu03}.
\end{proof}

\section{Higher Integrability}\label{sec:int}
\subsection{Covering}
We can finally come to the core of our argument: in the following we will build a proper covering with respect to $u$ for the level sets of the function 
$\abs{D u^m}^2$.

We assume that we have a sub-intrinsic cylinder
\begin{align}\label{startcylinder}
C\theta_o\geq\bigg(\dashiint_{Q_{2\theta_o R^2,2R}}u^{m+1}\,dxdt\bigg)^\frac{1-m}{m+1}.
\end{align}
We start by scaling everything to the cube $Q_{2,2}$. This can be done by introducing $\lambda_o=\theta_o^\frac{1}{m-1}$. Then we define $\tilde{u}(y,s)=\lambda_o u(R y,\lambda_o^{m-1}R^2 s)$. For this scaled solution we find
\begin{align}
C\geq \lambda_o^{m+1}\dashiint_{Q_{2\lambda_o^{m-1}R^2,2R}}u^{m+1}\,dxdt =\dashiint_{Q_{2,2}}{\tilde u}^{m+1}\,dyds.
\end{align}
Moreover, $\tilde{u}$ is a weak solution in $Q_{2,2}$ to
\[
\tilde{u}_{s} -  \div \tilde{\A}(y,s,\tilde{u},D\tilde{u}) = \tilde{f} 
\]
with right-hand side $\tilde{f}(y,s)=\lambda_o^{m} R^2 f(R y,\lambda_o^{m-1}R^2s)$, and $\tilde{\A}$ which satisfies structure conditions analogous to \eqref{PMS-eq:structure}.

By \eqref{PMS-eq: energy modified}, the previous scaling and \eqref{startcylinder}, we find that
\begin{align*}
\lambda_o^{2m}R^2\dashiint_{Q_{\lambda_o^{m-1}R^2,2R}}\abs{D u^m}^2\,dxdt&=\dashiint_{Q_{1,2}}\abs{D \tilde{u}^m}^2\,dyds\\
&\leq c\,\left[\dashiint_{Q_{2,2}}\tilde{u}^{m+1}\,dyds\right]^{\frac{2m}{m+1}}+c\,\dashiint_{Q_{2,2}}\tilde{f}^2\,dyds \leq C(\tilde f).
\end{align*}
We introduce the notation 
  \begin{align}
  F=\abs{D \tilde{u}^m}^{2}\chi_{Q_{2,2}}.
  \end{align}
	We fix 
\begin{align}\label{b}	
\begin{aligned}
&\hat{b}\in\Big(0,\min\set{{(n+2)(m+1)-2n},\frac12}\Big)\text{ and }\beta=(1-2\hat{b})\in (0,1),\\
&\text{such that }1<\frac{1}{1-2\hat{b}}=\frac{1}{\beta}<\frac{m+1}{1-m}.
\end{aligned}
\end{align}
Let $\gamma_1$ be the constant in Corollary~\ref{cor:deg-subint} for $K=1$, and $\tilde{\delta}\in(0,\frac12)$ a parameter which is to be chosen later. Finally, let $c_1>1$ be fixed by  \eqref{scal?:2} of Lemma~\ref{lem:scal?}.
Accordingly, we fix $c_o,c_2\in (2,\infty)$ depending on $\tilde{\delta}$ and the data alone, such that
\begin{align}
\label{eq:c1}
\gamma_1 \left(\frac{2}{c_o}\right)^{\frac{m+1}{1-m}\beta-1} =\tilde{\delta}\text{ and } c_2= 2c_1\tilde{3}c_o,\,\text{ with }\,\tilde3=3^{1/\hat b}.
\end{align}

We will apply Lemma~\ref{lem:scal} with respect to $\tilde{u}$ and this fixed choice of $\hat{b}$ on the sub-intrinsic initial cylinder $Q_{2,2}$. To simplify the notation, for $y\in Q_{1,1}$ and $s\in (0,1)$, 
we denote the sub-intrinsic cube $\displaystyle Q_{s,r(s,y)}(y)$ defined in Lemma~\ref{lem:scal} with $\displaystyle Q(s,y)$. Moreover, we denote again with $x$ and $t$ the new variables $y$ and $s$.
\begin{lemma}
\label{lem:bigcubes}
Let $c_2$ be fixed by \eqref{eq:c1}.
For $\frac12\leq a<b\leq 1$, there exists a parameter $\mu_{a,b}$, defined by
\[
\mu_{a,b}=\frac{C(\tilde f,\tilde\delta)}{\abs{b-a}^{\tau}},
\]
with $\tau$ depending only on the data, such that if for $z\in Q_{a,a}$, the intersection $Q(c_2 s,z)\cap Q_{b,b}^c$ is not empty, then
\[
\dashiint_{Q(s,z)}F\,dxdt\leq \mu_{a,b}.
\]
Here $\displaystyle C(\tilde f,\tilde{\delta})=c\,\dashiint_{Q_{2,2}}\tilde{f}^{2}\,dxdt+c\,\left[\dashiint_{Q_{2,2}}\tilde{u}^{m+1}\,dxdt\right]^{\frac{2m}{m+1}}$, with $c>0$ depending only on the data, and $\tilde{\delta}$.
\end{lemma}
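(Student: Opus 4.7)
The guiding idea is a dichotomy: whenever the enlarged cube $Q(c_2 s, z)$ protrudes from $Q_{b,b}$, either its time half-width $c_2 s$ or its spatial radius $r(c_2 s, z)$ must exceed $b-a$; via the scaling relations \eqref{eq:r} of Lemma~\ref{lem:scal}, this translates into a lower bound of the form $s\ge c\,(b-a)^{1/\hat{b}}$. Once $s$ is bounded below in terms of $b-a$, Corollary~\ref{cor:deg-subint} (which is available because $Q(s,z)$ is sub-intrinsic by construction) controls $\dashiint_{Q(s,z)} F$ in terms of $\theta_{s,z}^{(m+1)/(1-m)}/s$ and an $\tilde{f}$-term, and both are then bounded polynomially in $1/(b-a)$ by combining the lower bound on $s$ with a uniform control on $\theta_{s,z}$.

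First I would perform the dichotomy. Writing $z=(x_z,t_z)\in Q_{a,a}$, the cube $Q(c_2s,z)=(t_z-c_2s,t_z+c_2s)\times B_{r(c_2s,z)}(x_z)$ intersects $Q_{b,b}^c$ only when either $c_2 s+|t_z|>b$ or $r(c_2s,z)+|x_z|>b$. The first case gives $s>(b-a)/c_2$. In the second case \eqref{eq:r} applied to the initial cube of Lemma~\ref{lem:scal} (with $S=1$) yields $r(c_2s,z)\le c\,(c_2s)^{\hat{b}}$, so $(c_2s)^{\hat b}\ge c\,(b-a)$, i.e.\ $s\ge c\,(b-a)^{1/\hat{b}}$. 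Combining the cases and using $\hat{b}\le 1$ gives uniformly
\[
s\ge c\,(b-a)^{1/\hat{b}}.
\]

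Second I would control $\theta_{s,z}$. By Lemma~\ref{lem:scal?}\eqref{scal?:1} applied to the rescaled outer cube $Q_{2,2}$ (for which the initial assumption \eqref{startcylinder} supplies the constant $K$), $\theta_{S,z}$ is bounded by a constant depending only on the data and $C(\tilde{f},\tilde\delta)$. The second half of \eqref{eq:r} then gives the polynomial estimate $\theta_{s,z}\le c\,s^{-2(\hat{a}-\hat{b})}\,\theta_{S,z}$, so together with the lower bound on $s$,
\[
\theta_{s,z}\le c\,(b-a)^{-\kappa}, \qquad \kappa=2(\hat{a}-\hat{b})/\hat{b}.
\]

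Third, I would invoke Corollary~\ref{cor:deg-subint} applied on (a mild enlargement of) $Q(s,z)$ to obtain
\[
\dashiint_{Q(s,z)}F\,dxdt\,\le\, c\,\frac{\theta_{s,z}^{(m+1)/(1-m)}}{s}\,+\, c\,\sup_{Q_{2,2}}|\tilde{f}|^{2}\,\frac{s}{\theta_{s,z}}.
\]
Inserting the bounds on $s$ and $\theta_{s,z}$ from Steps~1--2, absorbing the $\tilde{f}$-term into $C(\tilde{f},\tilde{\delta})$, and collecting exponents yields $\dashiint_{Q(s,z)}F\le C(\tilde{f},\tilde\delta)/(b-a)^\tau$ with $\tau$ depending only on $m,n,\hat{a},\hat{b}$.

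The main obstacle I foresee is the geometric mismatch between $Q(s,z)$ and the ``half-cube'' $Q_{s,r(2s,z)/\sqrt{2}}(z)$ on which Corollary~\ref{cor:deg-subint} delivers its bound: since \eqref{eq:r} only guarantees $r(s,z)\le 2^{-\hat{b}}r(2s,z)$ and $2^{-\hat{b}}>2^{-1/2}$, containment is not automatic. The fix is to apply the corollary instead on $Q(\gamma s,z)$ for $\gamma>2$ chosen so that $\gamma^{\hat{a}}\ge 2^{\hat{b}}\sqrt{2}\cdot\gamma^{\hat{b}}$ (so that $r(s,z)\le r(\gamma s,z)/\sqrt{2}$ by the lower half of \eqref{eq:r}); this only worsens the constants by a multiplicative factor depending on $m,n,\hat a,\hat b$ and is absorbed into $\tau$.
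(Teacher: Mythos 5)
Your dichotomy in Step~1 and the resulting lower bound on $s$ match the paper exactly, but after that you and the paper take genuinely different routes. The paper's argument is purely measure-theoretic: the lower bounds on $s$ and on $r(s,z)$ show that $\abs{Q(s,z)}\gtrsim (b-a)^\tau\abs{Q_{2,2}}$, and since $Q(s,z)\subset Q_{2,2}$ one has directly
\[
\dashiint_{Q(s,z)}F\,dxdt\ \leq\ \frac{\abs{Q_{2,2}}}{\abs{Q(s,z)}}\,\dashiint_{Q_{2,2}}F\,dxdt\ \leq\ \frac{C(\tilde f,\tilde\delta)}{\abs{b-a}^\tau},
\]
where the last inequality is the global energy bound for $\tilde u$ on $Q_{2,2}$ that was established at the start of Section~\ref{sec:int} when the scaling to $Q_{2,2}$ was set up. Your route instead re-invokes the PDE locally via Corollary~\ref{cor:deg-subint} on (an enlargement of) $Q(s,z)$, and then feeds in a polynomial bound on $\theta_{s,z}$. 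This works, but it is more machinery than is needed, and it introduces two small mismatches worth flagging: (a) the exponent you write in Step~2 is $s^{-2(\hat a-\hat b)}$, whereas \eqref{eq:r} actually gives $\theta_{\gamma s}\le c\,\gamma^{1-2\hat a}\theta_s$ — harmless, since either is a fixed power, but worth getting right; (b) you close with $\sup_{Q_{2,2}}\abs{\tilde f}^2$, which does not match the stated $C(\tilde f,\tilde\delta)$, whose $f$-dependence is through $\dashiint_{Q_{2,2}}\tilde f^2$. You should instead take the integral form of Corollary~\ref{cor:deg-subint} (with $\frac{s}{\theta}\dashiint\abs{\tilde f}^2$) and then bound $\dashiint_{Q(s,z)}\tilde f^2$ by $\abs{Q_{2,2}}/\abs{Q(s,z)}\cdot\dashiint_{Q_{2,2}}\tilde f^2$ — i.e.\ the same measure comparison the paper uses for $F$, which suggests there is no real economy in routing through the energy estimate. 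Your geometric fix at the end (enlarging to $Q(\gamma s,z)$ with $\gamma\sim 2^{1/\hat b}$) is the right correction for the radius mismatch.
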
 
\begin{proof}
If $Q(c_2s,z)\cap Q_{b,b}^c\neq \emptyset$, then either
$c_2s/2>(b-a)$, or $r(c_2s,z)>(b-a)$. In the latter case, by \eqref{scal:0} of Lemma~\ref{lem:scal} (using the fact that $S=1$ and $R(z)\sim1$ here), we have that $(b-a)\leq c_2^{\hat{b}}s^{\hat{b}}$ and by \eqref{scal:3} that $(b-a)\leq c_2^{\hat{a}}r(s,z)$. 

\noindent In case $c_2s/2>(b-a)$, we find that
$r(s,z)\geq r(\frac{b-a}{c_2},z)\geq c\abs{b-a}^{\hat{a}}R(z)\geq c\abs{b-a}^{\hat{a}}$.
 Therefore, \eqref{scal:3} of Lemma~\ref{lem:scal} implies for $\tau= \max\set{\hat{a},1}n+\frac{1}{\hat{b}}$, that
 \begin{align}
\label{eq:subintr}
\dashiint_{Q(s,z)} F\,dxdt\leq \frac{1}{\abs{b-a}^{\tau}}\dashiint_{Q_{2,2}} F\,dxdt\leq \frac{C(\tilde f,\tilde\delta)}{\abs{b-a}^{\tau}}.
 \end{align}
\end{proof}
Now, for a locally integrable function $g$, we introduce the related intrinsic maximal function
\[
\Mi(g)(z)=\sup_{Q(s,y)\ni z,\ y\in Q_{1,1}}\dashiint_{Q(s,y)}\abs{g}\,dxdt,
\]
and we define 
\begin{align}
M^*(g)(t,x):=\sup_{t\in I\subset (-2,2),\ x\in B\subset B_2}\dashint_I\dashint_B\abs{g}\,dxdt.
\end{align}
Notice that we have 
\begin{align}
\abs{g}\leq\Mi(g)\leq M^*(g)\qquad \text{ a.e.};
\end{align}
therefore, $\Mi$ is continuous from $L^q\to L^q$, whenever $g\in L^q$. 

We define the level sets of $F$ by 
\begin{align}
O_\lambda :=\set{\Mi(F)>\lambda}.
\end{align} 
	
The next proposition is the core of the proof of the higher integrability. It constructs a covering, which allows to exploit the reverse H\"older estimates of the previous section in a suitable way. It is a covering of Calderon-Zygmund type for $F$, build using cylinders scaled with respect to $\tilde u$. 

The fact that the scaling is done with respect to $\tilde u$, and not with respect to the function whose level sets are covered, i.e. $F$, makes things quite delicate. In this context, this seems the right way to proceed, instead of relying on the by-now standard approach of parabolic intrinsic Calderon-Zygmund covering, originally introduced by Kinnunen \& Lewis~\cite{Kinnunen:2000}.   
	
\begin{proposition}\label{lem:czcover}
	For $\tilde{\delta}$ fixed in \eqref{eq:delta} below, and 
	$\frac12\leq a<b\leq 1$, let $\mu_{a,b}$ be the quantity introduced in Lemma~\ref{lem:bigcubes}. Let $q=\min{\set{\vartheta_1,\vartheta_2}}\in (0,1)$ where $\vartheta_1$, $\vartheta_2$ are the exponents defined via the reverse H\"older estimates of Propositions~\ref{Prop:rev-hold-deg} and \ref{Prop:rev-hol-non-deg}.
	
For every $\lambda>\mu_{a,b}$, and every $z\in O_\lambda\cap Q_{a,a}$, there exist 
$Q_z\subset Q_z^*\subset Q_z^{**}\subset Q_{b,b}$, which satisfy the following properties.
\begin{enumerate}
\item[(i)] $\abs{Q_z},\abs{Q_z^*},\abs{Q_z^{**}}$ are of comparable size.
\item[(ii)]
 For any $y\in Q_z^*$ there exist constants $c$ and $C$ just depending on the data such that
\begin{equation}\label{eq:ii}
\lambda\leq {c}\dashiint_{Q_{z}}F\,dxdt\leq C\bigg(\dashiint_{Q_{z}^*}F^q\,dxdt \bigg)^\frac1q + CM^*(\tilde{f^2})(y).
\end{equation}
\item[(iii)] $\displaystyle\dashiint_{Q_z^{**}} F\,dxdt\leq 2 \lambda$.
%
%
\end{enumerate}
Moreover, the set $O_\lambda\cap Q_{a,a}$ can be covered by a family of cylinders 
$Q_i^{**}:=Q_{z_i}^{**}\subset Q_{b,b}$, such that the cylinders of the family $Q_i^*$ are pairwise disjoint.
\end{proposition}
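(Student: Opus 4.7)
My plan is to combine a stopping-time construction on the averages of $F$ in the intrinsic cylinders of Lemma~\ref{lem:scal?} with the reverse H\"older estimates of Propositions~\ref{Prop:rev-hold-deg} and~\ref{Prop:rev-hol-non-deg}, and then close the argument via Vitali's covering (Lemma~\ref{Lm:Vitali:1}). Fix $z\in O_\lambda\cap Q_{a,a}$. Since $\Mi(F)(z)>\lambda$, the definition of the maximal function combined with property~\eqref{scal?:2} of Lemma~\ref{lem:scal?} produces a scale $s$ and a constant $c_3\in(0,1)$ depending only on the data such that $\dashiint_{Q(s,z)} F\,dxdt>c_3\lambda$. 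On the other hand, for $\lambda>\mu_{a,b}$, Lemma~\ref{lem:bigcubes} yields $\dashiint_{Q(s',z)} F\,dxdt\leq \mu_{a,b}<c_3\lambda$ whenever $s'$ is large enough that $Q(c_2 s',z)$ meets $Q_{b,b}^c$. Since $r(\cdot,z)$ is continuous by Lemma~\ref{lem:scal}, so is $s\mapsto \dashiint_{Q(s,z)} F\,dxdt$, and I will define $s_z$ as the largest $s$ with $\dashiint_{Q(s,z)} F\,dxdt=c_3\lambda$; then $\dashiint_{Q(s,z)} F\,dxdt<c_3\lambda$ for every admissible $s>s_z$.

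I will then set $Q_z:=Q(s_z,z)$, $Q_z^*:=Q(c_o s_z,z)$ and $Q_z^{**}:=Q(c_2 s_z,z)$ with $c_o,c_2$ as in~\eqref{eq:c1}. Property~(i) is immediate from~\eqref{scal:3} of Lemma~\ref{lem:scal}; the lower bound $\lambda\leq c_3^{-1}\dashiint_{Q_z} F\,dxdt$ in~(ii) is built into the stopping-time definition; and~(iii) follows since $c_2 s_z>s_z$ gives $\dashiint_{Q_z^{**}} F\,dxdt<c_3\lambda<2\lambda$. Lemma~\ref{lem:bigcubes} simultaneously ensures $Q_z^{**}\subset Q_{b,b}$.

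The heart of the argument is the reverse H\"older half of~(ii). Here I will dichotomize according to whether $Q_z$ lies in the degenerate regime~\eqref{eq:deg1} or the non-degenerate regime~\eqref{eq:ndeg} relative to $\tilde u$. In the degenerate case Proposition~\ref{Prop:rev-hold-deg} yields
\[
\dashiint_{Q_z} F\,dxdt\leq C\bigg(\dashiint_{Q(3s_z,\,3r(s_z,z))} F^{\vartheta_1}\,dxdt\bigg)^{1/\vartheta_1}+C\sup\bigl[r(s_z,z)^2|\tilde f|^2\bigr];
\]
in the non-degenerate case the analogous estimate from Proposition~\ref{Prop:rev-hol-non-deg} with exponent $\vartheta_2$ applies. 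Setting $q:=\min\{\vartheta_1,\vartheta_2\}$, Jensen's inequality together with the inclusion $Q(3s_z,3r(s_z,z))\subset Q_z^*$ (guaranteed by the choice of $c_o$ and by~\eqref{scal:7} of Lemma~\ref{lem:scal}) deliver the first summand of~(ii), while the supremum is dominated by $CM^*(\tilde f^2)(y)$ for any $y\in Q_z^*$, since $r$ is comparable on the enlarged cube. A crucial subtlety is that both reverse H\"older estimates require intrinsicness with respect to $\tilde u$; if $Q_z$ is only strictly sub-intrinsic, I will pass to a nearby intrinsic scale $s_1\in[s_z,c_o s_z)$ supplied by~\eqref{scal:5} of Lemma~\ref{lem:scal}, and use the controlled growth~\eqref{scal:6} together with the calibration of $\tilde\delta$ in~\eqref{eq:c1} to transfer averages between $Q(s_1,z)$ and $Q_z$ with uniform constants.

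Finally, the family $\{Q_z^*\}_z$ will satisfy the hypotheses of Lemma~\ref{Lm:Vitali:1}: nesting from~\eqref{scal:0} of Lemma~\ref{lem:scal}, the doubling bound~\eqref{vit-1} from~\eqref{scal:3} of Lemma~\ref{lem:scal}, and the quasi-metric property~\eqref{vit-2} from~\eqref{scal?:2} of Lemma~\ref{lem:scal?}. Extracting a disjoint subfamily $\{Q_{z_i}^*\}$ will yield the enlarged cover $\{Q_{z_i}^{**}\}$ of $O_\lambda\cap Q_{a,a}$. The hardest step is the third one: reconciling the stopping-time scale (adapted to $F$-averages) with the intrinsic scale (adapted to $\tilde u$); the constants in~\eqref{eq:c1} are engineered precisely so that this reconciliation is quantitative.
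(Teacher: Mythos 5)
Your overall framework — stopping time in the nested family $Q(s,z)$, dichotomy into degenerate/non-degenerate intrinsic regimes, reverse H\"older at the intrinsic scale, then Vitali — matches the paper for the easy cases. But there is a genuine gap in how you handle the sub-intrinsic situation, and it is precisely the delicate part of the argument.

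You write that if $Q_z$ is strictly sub-intrinsic, you will ``pass to a nearby intrinsic scale $s_1\in[s_z,c_o s_z)$ supplied by~\eqref{scal:5} of Lemma~\ref{lem:scal}.'' That is not what~\eqref{scal:5} provides. Item~\eqref{scal:5} produces an intrinsic scale in $[s,\sigma)$ only when the strict inequality $r(s)<(s/\sigma)^{\hat b}r(\sigma)$ holds; in the complementary situation — when $r(a)=(a/\sigma)^{\hat b}r(\sigma)$ for all $a\in(s,\sigma)$, which is exactly the strictly sub-intrinsic case treated in~\eqref{scal:6} — there is \emph{no} intrinsic cylinder in that window. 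This is the paper's Case~3, and it cannot be folded into the argument for Cases~1--2. You also conflate~\eqref{scal:5} and~\eqref{scal:6}, invoking both simultaneously, even though they describe mutually exclusive situations, and you describe $\tilde\delta$ as giving ``uniform constants'' to transfer averages, missing that $\tilde\delta$ is specifically chosen \emph{small} so that a certain term can be absorbed.

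In the absence of a nearby intrinsic scale, neither Proposition~\ref{Prop:rev-hold-deg} nor Proposition~\ref{Prop:rev-hol-non-deg} applies to $Q_z$, so the reverse H\"older half of~\eqref{eq:ii} does not follow from your dichotomy. What the paper does instead: it uses the sub-intrinsic energy estimate of Corollary~\ref{cor:deg-subint} to bound $\dashiint_{Q_z}F$ by $\gamma_1\,\theta_{2s_z}^{(m+1)/(1-m)}/(2s_z)$ plus a maximal-function term; it then exploits~\eqref{scal:6} with the calibrated $c_o$ of~\eqref{eq:c1} to show that this quantity is $\leq\tilde\delta\,\theta_{\sigma_z}^{(m+1)/(1-m)}/\sigma_z$, where $\sigma_z:=\inf\{s\geq c_o s_z:\ Q(s,y_z)\text{ intrinsic}\}$ may be of order $1$, far above $c_o s_z$; for small $\sigma_z$ it shows (via Lemma~\ref{seb-app} and a factor-$2^{(m+1)/(1-m)}$ drop in averages) that $Q(\sigma_z,y_z)$ is not only intrinsic but also \emph{degenerate}, allowing Proposition~\ref{Prop:rev-hold-deg} to be applied there and combined with the stopping-time bound; for $\sigma_z$ close to $1$ it instead invokes~\eqref{scal?:1} and the sub-intrinsicity of the starting cylinder. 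Finally $\tilde\delta$ is fixed so that the resulting contribution is $\leq\frac12\lambda$ and can be \emph{absorbed}, closing~\eqref{subintr:3}. None of this machinery appears in your sketch, and it cannot be replaced by simply locating an intrinsic scale in $[s_z,c_os_z)$, since that scale may not exist.
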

\begin{proof}
We fix $\lambda>\mu_{a,b}$, and for $z\in O_\lambda\cap Q_{a,a}$ we choose $y_z$ and $s_z$, such that $Q(s_z,y_z)\ni z$ and
\begin{align}\label{eq:lambda}
\lambda < \dashiint_{Q(s_z,y_z)}F\,dxdt\ \ \text{ and }\ \ \dashiint_{Q(\sigma,\xi)}F\,dxdt\leq 2\lambda
\end{align}
for all $Q{(\sigma,\xi)}\supset Q(s_z,y_z)$; such a cylinder certainly exists by the very definition of the set $O_\lambda$. 
In the following notation, for $c\in \setR^+$, we let $\tilde{c}=c^{1/\hat b}$, with $\hat b$ as in \eqref{b}.
For $z\in O_\lambda\cap Q_{a,a}$, we will carefully choose cylinders according to the following table . Here we rely on \eqref{eq:c1}; notice that we are assuming $c_o>2$.
\vskip.2truecm
\begin{easylist}
\ListProperties(Hide=100, Hang=true, Progressive=3ex, Style*=$\bullet$ ,
Style2*=$\circ$ ,Style3*=\tiny$\blacksquare$ )
\item {\bf Case 1}: There exists $\sigma\in [2s_z,c_o s_z]$ such that  $Q(\sigma,y_z)$ is intrinsic and the degenerate alternative holds in $Q(\sigma,y_z)$. Then we let\\
\begin{itemize}
\item $Q_z=Q(\sigma, y_z),\qquad Q_z^{*}=Q(\tilde{3}\sigma,y_z),\qquad Q_z^{**}=Q(2c_1\tilde{3}\sigma, y_z)$\\
\end{itemize}
\item {\bf Case 2}:  There exists $\sigma\in [2s_z,c_o s_z]$ such that  $Q(\sigma,y_z)$ is intrinsic and the non-degenerate alternative holds in $Q(\sigma,y_z)$. Then we let\\
\begin{itemize}
\item $Q_z=\frac12Q(\sigma, y_z),\qquad Q_z^{*}=Q(\sigma,y_z),\qquad Q_z^{**}=Q(2c_1\sigma, y_z)$\\
\end{itemize}
\item {\bf Case 3}: There exists no $\sigma\in [2s_z,c_o s_z]$ such that  $Q(\sigma,y_z)$ is intrinsic. Then we let\\
\begin{itemize}
\item $Q_z=\frac12 Q(2s_z, y_z), \qquad Q_z^{*}=Q(2s_z,y_z),\qquad Q_z^{**}=Q(4 c_1 s_z, y_z)$\\
\end{itemize}
\end{easylist}
Since $2c_1\tilde{3}\,\sigma\leq c_2\,s_z$, Lemma~\ref{lem:bigcubes} implies that $Q^{**}_z\subset Q_{b,b}$ in all the above cases, as otherwise there is a contradiction to $\lambda>\mu_{a,b}$ by \eqref{eq:lambda}. 

On the one hand, \eqref{scal:7} of Lemma~\ref{lem:scal} implies that $Q_z^{**}\supset Q_z
\supset Q(s_z,y_z)$, and therefore, we conclude that
\[
\dashiint_{Q_z^{**}} F\,dxdt\leq 2 \lambda.
\]
On the other hand, since \eqref{scal:3} of Lemma~\ref{lem:scal} implies that
$\abs{Q_z}\approx \abs{Q_z^{**}}\approx \abs{Q(s_z,y_z)}$, by \eqref{eq:lambda} we find that
\begin{align}
\label{lambda:low}
\lambda \leq c \dashiint_{Q_z} F\,dxdt.
\end{align}
The proof of the reverse H\"older inequality has to be split in several sub-cases.
\vskip.2truecm
\noindent{\bf Case 1.}\\
In this case $Q_z$ is intrinsic and the degenerate alternative holds.
Consequently, \eqref{lambda:low} and Proposition~\ref{Prop:rev-hold-deg} imply
\[
\lambda \leq c\dashiint_{Q_z}  F\,dxdt\leq  C\bigg(\dashiint_{Q_z^*}F^q\,dxdt\bigg)^\frac1q+ C M^*(\tilde f^2)(y),
\]
 for any $y\in Q_z^*$.
\vskip.2truecm
\noindent{\bf Case 2.}\\ 
In this case $Q_z^*$ is intrinsic and the non-degenerate alternative holds. Proposition~\ref{Prop:rev-hol-non-deg} and \eqref{lambda:low} directly imply
\[
\lambda \leq c\dashiint_{Q_z}  F\,dxdt\leq  C\bigg(\dashiint_{Q_z^*}F^q\,dxdt\bigg)^\frac1q+ C M^*(\tilde f^2)(y),
\]
 for any $y\in Q_z^*$.
\vskip.2truecm
\noindent{\bf Case 3.}\\
This is the most delicate part.
Applying Corollary~\ref{cor:deg-subint} on the cylinder $Q_z=\frac12Q(2s_z,y_z)$ yields
\[
\lambda \leq c\, \dashiint_{Q_z}  F\,dxdt\leq  \gamma_1\frac{\theta_{2 s_z,y_z}^{\frac{m+1}{1-m}}}{2s_z}+ M^*(\tilde f^2)(y).
\]
We now show that
\begin{align}
\label{subintr:3}
\gamma_1\frac{\theta_{2 s_z,y_z}^{\frac{m+1}{1-m}}}{2s_z}\leq \frac12\lambda+ M^*(\tilde f^2)(y).
\end{align}
Once this is proven, then by absorption, estimate \eqref{eq:ii} follows.

We begin by defining $\sigma_z:=\inf\set{s\in [c_o s_z,1] : Q(s,y_z)\text{ is intrinsic}}$. Since we are in Case 3, we find that $\sigma_z\in (c_o s_z,1]$; \eqref{scal:6} of Lemma~\ref{lem:scal} and the choice of $c_o$ in \eqref{eq:c1} imply that
\begin{align*}
\theta_{2 s_z,y_z}&\le\left(\frac{2 s_z}{\sigma_z}\right)^\beta\theta_{\sigma_z,y}\\
\Rightarrow\,\,\gamma_1\frac{\theta_{2 s_z,y_z}^{\frac{m+1}{1-m}}}{2 s_z}&\le\gamma_1\left(\frac{2 s_z}{\sigma_z}\right)^{\frac{m+1}{1-m}\beta}\frac{\theta_{\sigma_z,y_z}^{\frac{m+1}{1-m}}}{2 s_z}\\
\Rightarrow\,\,\gamma_1\frac{\theta_{2 s_z,y_z}^{\frac{m+1}{1-m}}}{2 s_z}&\le\gamma_1\left(\frac{2 s_z}{\sigma_z}\right)^{\frac{m+1}{1-m}\beta-1}\frac{\theta_{\sigma_z,y_z}^{\frac{m+1}{1-m}}}{\sigma_z}\\
\Rightarrow\,\,\gamma_1\frac{\theta_{2 s_z,y_z}^{\frac{m+1}{1-m}}}{2 s_z}&\le\gamma_1\left(\frac{2}{c_o}\right)^{\frac{m+1}{1-m}\beta-1}\frac{\theta_{\sigma_z,y_z}^{\frac{m+1}{1-m}}}{\sigma_z}.
\end{align*}
Notice that in \eqref{b} we fixed $\beta$ large enough (that is, $\hat{b}$ small enough) such that
\[
1<\frac{m+1}{1-m}\beta=\frac{m+1}{1-m}(1-\hat{b})
\]
and in \eqref{eq:c1} $c_o$ is such that 
\[
\gamma_1\left(\frac{2}{c_o}\right)^{\frac{m+1}{1-m}\beta-1}=\tilde{\delta}.
\]
We conclude that
\begin{align}
\label{subintr:2}
\gamma_1\frac{\theta_{2 s_z,y_z}}{2s_z}\leq \tilde{\delta} \frac{\theta_{\sigma_z,y_z}}{\sigma_z}.
\end{align}
In the simple case $\sigma_z\in ({1}/{\tilde 3},1]$, we further estimate by \eqref{scal?:1} of 
Lemma~\ref{lem:scal?}. Indeed, since $\lambda>1$ and $\theta_o\sim 1$, by \eqref{startcylinder} we find  that there is a $c_3\geq 1$ independent of $\tilde{\delta}$, such that
\[
\frac{\theta_{\sigma_z,y_z}}{\sigma_z}\leq c\theta_{1,y_z}\leq c_3\lambda;
\]
this implies \eqref{subintr:3} once we choose $\tilde{\delta}\leq\frac{1}{2c_3}$.

\noindent In the difficult case $\sigma_z\in (c_o s_z,{1}/{\tilde 3}]$, we use the fact that $Q(\sigma_z,y_z)$ is intrinsic. Hence by Lemma~\ref{lem:scal}, \eqref{scal:2} and \eqref{scal:6} we find that
\[
\dashiint_{Q(\sigma_z/2,y_z)}\tilde{u}^{m+1}\,dxdt\leq \theta_{\sigma_z/2,y_z}^\frac{m+1}{1-m}\leq \Big(\frac{\theta_{\sigma_z,y_z}}{2}\Big)^\frac{m+1}{1-m}\leq \frac{1}{2^\frac{m+1}{1-m}}\dashiint_{Q(\sigma_z,y_z)}\tilde{u}^{m+1}\,dxdt.
\]
By Lemma~\ref{seb-app}, this implies that $Q(\sigma_z,y_z)$ is degenerate and intrinsic. Now Proposition~\ref{Prop:rev-hold-deg}, \eqref{scal:7} of Lemma~\ref{lem:scal}, and Jensen's inequality imply
\[
\tilde{\delta}\, \frac{\theta_{\sigma_z,y_z}}{\sigma_z}\leq c\tilde{\delta}\bigg(\dashiint_{3Q(\sigma_z,y_z)} F^q\,dxdt\bigg)^\frac{1}{q}+M^*(\tilde f^2)(y)
\leq C\,\tilde{\delta}\dashiint_{Q(\tilde{3}\sigma_z,y_z)} F\,dxdt+C M^*(\tilde f^2)(y).
\]
Therefore, by the construction of $Q(s_z,y_z)$, we find that there is a $c_4$ independent of $\tilde{\delta}$, such that
\[
\tilde{\delta} \frac{\theta_{\sigma_z,y_z}}{\sigma_z} \leq c_4\tilde{\delta}\lambda +M^*(\tilde f^2)(y).
\]
Now \eqref{subintr:3} follows by choosing
\begin{align}
\label{eq:delta}
\tilde{\delta}=\min\set{\frac{1}{2c_3},\frac{1}{2c_4}}.
\end{align}
This finishes the construction of $Q_z,Q_z^*$ and $Q_z^{**}$. 
The covering now follows by Lemma~\ref{lem:scal?} and Lemma~\ref{Lm:Vitali:1}, for $\Omega=O_\lambda$ and $U(z,s_z):=Q_z^*$. 
\end{proof}
We can finally conclude and prove the higher integrability result.
\begin{thm}[Intrinsic]\label{thm-intr}
Let $u\ge0$ be a local, weak solution to {\eqref{PME}-\eqref{PMS-eq:structure}} in the space-time cylinder $E_T$ for $m \in \Big(\frac{(n-2)_+}{n+2},1\Big)$. There exist an exponent $p>1$, and a constant $c$ that depends only on the data, such that for any sub-intrinsic parabolic cylinder 
\[
\bigg(\dashiint_{Q_{S,\sqrt{S/\theta_o}}} u^{m+1}\,dxdt\bigg)^\frac{1-m}{m+1}\leq C\theta_o,
\]
we have
\begin{align*}
\bigg(\dashiint_{Q_{\frac12S,\frac12\sqrt{S/\theta_o}}}\abs{D u^m}^{2p}\,dxdt\bigg)^\frac{1}{p}\leq c\bigg(\dashiint_{Q_{2S,2\sqrt{S/\theta_o}}}f^{2p}\,dxdt\bigg)^\frac{1}{p}+c'\,\frac{\theta_o^\frac{m+1}{1-m}}{S},
\end{align*}
where $c'$ additionally depends on $C$.
\end{thm}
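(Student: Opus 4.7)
\emph{Step 1: reduction by scaling.} Following the normalization at the beginning of Section~\ref{sec:int}, put $\lambda_o = \theta_o^{1/(m-1)}$, $R = \sqrt{S/\theta_o}$, and define $\tilde u(y,s) = \lambda_o u(Ry, \lambda_o^{m-1}R^2 s)$. Then $\tilde u \ge 0$ is a weak solution on $Q_{2,2}$ of an equation of the same structure with right-hand side $\tilde f(y,s) = \lambda_o^m R^2 f(Ry, \lambda_o^{m-1}R^2 s)$ and sub-intrinsic bound $\dashiint_{Q_{2,2}}\tilde u^{m+1}\le C$. With $F := |D\tilde u^m|^2\chi_{Q_{2,2}}$, the energy estimate of Corollary~\ref{cor:deg-subint} yields an a priori $L^1$-bound on $F$ over $Q_{1,2}$. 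A direct check of the scaling shows that $|Du^m|^{2p}$ and the weight $\theta_o^{(m+1)/(1-m)}/S$ both carry the factor $\lambda_o^{-2m}R^{-2}$, while $f^{2p}$ carries $\lambda_o^{-2m}R^{-4}$, so that it suffices to prove, for some $p > 1$ depending only on the data, the scale-invariant estimate
\[
\Bigl(\dashiint_{Q_{1/2,1/2}}F^p\,dyds\Bigr)^{1/p} \le c\Bigl(\dashiint_{Q_{1,1}}(M^*\tilde f^2)^p\,dyds\Bigr)^{1/p} + c',
\]
after which the Hardy--Littlewood maximal inequality dominates $M^*\tilde f^2$ by $\tilde f^2$ in $L^p$ on a slightly enlarged cube, and unscaling produces the announced form.

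\emph{Step 2: good-$\lambda$ inequality from the covering.} Fix $\tfrac12 \le a < b \le 1$. For every $\lambda > \mu_{a,b}$ from Lemma~\ref{lem:bigcubes}, Proposition~\ref{lem:czcover} supplies cubes $Q_i \subset Q_i^* \subset Q_i^{**}\subset Q_{b,b}$ with $\{Q_i^{**}\}$ covering $O_\lambda \cap Q_{a,a}$, the $\{Q_i^*\}$ pairwise disjoint, and all three of comparable size. On each $Q_i^*$, part~(ii) furnishes the dichotomy: either $(\dashiint_{Q_i^*} F^q)^{1/q}\ge \lambda/(2C)$, or $Q_i^* \subset \{M^*\tilde f^2 > \lambda/(2C)\}$, where $q = \min\{\vartheta_1,\vartheta_2\}\in(0,1)$. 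In the first case, a Chebyshev split at the threshold $\eta\lambda$ (with $\eta\in(0,1/(2C))$ chosen small depending on $q, C$) gives $|Q_i^*|\le c\lambda^{-q}\int_{Q_i^*\cap\{F>\eta\lambda\}}F^q$. Summing over the disjoint $\{Q_i^*\}$ and using size-comparability produces
\[
|O_\lambda\cap Q_{a,a}|\le \frac{c}{\lambda^q}\int_{Q_{b,b}\cap\{F>\eta\lambda\}}F^q\,dyds + c\left|\{M^*\tilde f^2 > c\lambda\}\cap Q_{b,b}\right|\qquad (\lambda > \mu_{a,b}).
\]

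\emph{Step 3: Gehring-type self-improvement.} Multiplying by $P\lambda^{P-1}$ for $P$ slightly above $1$ and integrating $\lambda\in(\mu_{a,b},\infty)$, layer-cake and Fubini deliver
\[
\int_{Q_{a,a}}F^P\,dyds\le \mu_{a,b}^P|Q_{a,a}| + \frac{cP\,\eta^{q-P}}{P-q}\int_{Q_{b,b}}F^P\,dyds + c\int_{Q_{b,b}}(M^*\tilde f^2)^P\,dyds.
\]
The self-coefficient $K := cP\eta^{q-P}/(P-q)$ depends only on data and $P$. A standard truncation $F\mapsto F_N = \min(F,N)$ (which guarantees a priori finiteness of the $L^P$-norm on the right) combined with an iteration à la \cite[Lemma 6.1]{Giu03} on $(a,b)\in[\tfrac12,1]^2$ (exploiting $\mu_{a,b}\le C(b-a)^{-\tau}$ from Lemma~\ref{lem:bigcubes}) promotes the inequality to
\[
\Bigl(\dashiint_{Q_{1/2,1/2}}F^P\,dyds\Bigr)^{1/P}\le c\dashiint_{Q_{1,1}}F\,dyds + c\Bigl(\dashiint_{Q_{1,1}}(M^*\tilde f^2)^P\,dyds\Bigr)^{1/P} + c
\]
for some $P > 1$ depending only on the data; the first right-hand term is controlled by the energy bound of Step~1 and the second by the $L^P$-continuity of $M^*$, after which unscaling closes the proof.

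\emph{Principal obstacle.} The delicate step is the Gehring self-improvement of Step~3: unlike the elliptic case, the reverse H\"older inequality of Proposition~\ref{lem:czcover}(ii) is not available on arbitrary parabolic cylinders but only on the CZ cubes produced by the intrinsic covering, so the classical proof must be threaded through the Vitali structure of Lemma~\ref{lem:scal?}. The quantitative polynomial dependence $\mu_{a,b}\le C(b-a)^{-\tau}$ is what makes the iteration converge, and it ultimately rests on the scaling identities~\eqref{scal:3} and~\eqref{scal:7} of Lemma~\ref{lem:scal}. A secondary technicality is that the reverse H\"older exponent $q$ produced by Propositions~\ref{Prop:rev-hold-deg}--\ref{Prop:rev-hol-non-deg} lies in $(0,1)$, so one is in the (less standard) \emph{inverted} form of Gehring's lemma, but the stopping-time argument extends naturally.
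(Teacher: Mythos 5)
Steps~1 and~2 of your proposal track the paper closely: the same normalization to $Q_{2,2}$, the same use of Proposition~\ref{lem:czcover} (dichotomy on each CZ cylinder, Chebyshev split, disjointness of the $Q_i^*$), and the summation over the covering to get a good-$\lambda$-type inequality indexed on $(a,b)$. Your cleaner \emph{dichotomy} formulation (either $(\dashiint_{Q_i^*}F^q)^{1/q}\ge\lambda/(2C)$ or $Q_i^*\subset\{M^*\tilde f^2>\lambda/(2C)\}$) is a minor simplification of the paper's measure-fraction argument over the bad set $U_\lambda$, and it is correct.

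Step~3, however, has a genuine gap in the self-improvement. You multiply the \emph{measure} inequality by $P\lambda^{P-1}$ and integrate, which produces
\[
\int_{Q_{a,a}}\min\{F,k\}^{P}\,dyds \;\le\; \frac{cP\,\eta^{q-P}}{P-q}\int_{Q_{b,b}}F^{q}\min\{F,k\}^{P-q}\,dyds + \cdots,
\]
and this inequality cannot be closed: first, $F^{q}\min\{F,k\}^{P-q}\ge\min\{F,k\}^{P}$, so the right-hand side dominates the left pointwise and absorption via the hole-filling Lemma~6.1 of \cite{Giu03} is impossible in general; second, the coefficient $\frac{cP\,\eta^{q-P}}{P-q}$ stays bounded away from $0$ as $P\to 1^{+}$ (with $q<1$ fixed, $\eta^{q-P}\to\eta^{q-1}$ and $P-q\to1-q$), so there is no smallness to exploit. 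The paper circumvents this precisely by weighting the \emph{co-area} quantity $\iint_{O_\lambda\cap Q_{a,a}}F\,dyds$ (not $|O_\lambda\cap Q_{a,a}|$) by $\lambda^{-\alpha}$ and integrating: the left-hand side then acquires the factor $\tfrac{1}{1-\alpha}$, while the right-hand side carries $\tfrac{1}{2-q-\alpha}$, and the inequality $F^{q}\min\{F,k\}^{2-q-\alpha}\le F\min\{F,k\}^{1-\alpha}$ matches the two truncated integrands. The ratio $\tfrac{c(1-\alpha)}{2-q-\alpha}$ can then be made smaller than $\tfrac12$ by choosing $\alpha$ close to $1$, which is exactly what permits absorption and fixes $p=2-\alpha>1$. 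So you need to replace the $P\lambda^{P-1}$-weighted measure integral in your Step~3 by the paper's $\lambda^{-\alpha}$-weighted integral of $F$ over the super-level sets of $\Mi(F)$; the rest of your outline (truncation, iteration on nested cylinders, $L^{p}$-continuity of $M^{*}$, unscaling) then goes through.
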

\begin{proof}
We define the so-called \emph{bad set}
\begin{align*}
U_\lambda :=O_\lambda\cap \set{M^*(\tilde f^2\chi_{Q_{2,2}})\leq \tilde\epsilon\lambda},
\end{align*}
for some $\tilde\epsilon$, which will be chosen later.
We proceed by providing a re-distributional estimate. We take $\frac12\leq a<b\leq 1$ and the corresponding covering constructed in Lemma~\ref{lem:czcover}.
We start by
  \begin{align*}
  \abs{Q_i}=\abs{Q_i\cap U_\lambda}+\abs{Q_i\cap(U_\lambda)^c},
  \end{align*}
where $Q_i$ is one of the cylinders $Q_{z_i}$ built in Proposition~\ref{lem:czcover}.
Let us first consider the case 
\[
\frac{\abs{Q_i\cap U_\lambda}}{\abs{Q_i}}\geq \frac12.
\] 
This implies that there exists $y\in  Q_i$, such that $M^*(f)(y)\leq \tilde\epsilon \lambda$.
	We can apply the reverse 
  H\"older estimate \eqref{eq:ii} of Proposition~\ref{lem:czcover}, and obtain for some $\gamma\in(0,1)$ that
  \begin{align*}
  \lambda^q\leq c\left(\dashiint_{Q_i} F\,dxdt\right)^q
  \leq \frac{C}{\abs{Q_i}}\iint_{Q_i^{*}}F^q\chi_{\set{F>\gamma\lambda}}\,dxdt+C(\gamma\lambda)^q+C\tilde\epsilon\lambda^q.
  \end{align*}
We now choose $\gamma$ and $\tilde\epsilon$ conveniently small, such that $C(\gamma\lambda)^q+C\tilde\epsilon\lambda^q=\frac12\lambda^q$ and find
 \begin{align*}
\lambda\abs{Q_i}\leq c\lambda^{1-q}\iint_{Q_i^{*}}F^q\chi_{\set{F>\gamma\lambda}}\,dxdt.
  \end{align*}
On the other hand,
\[
\abs{Q_i}\leq 2\abs{Q_i\cap (U_\lambda)^c}\quad\Rightarrow\quad\lambda\abs{Q_i}\leq 2\lambda\abs{Q_i\cap (U_\lambda)^c}.
\] 
Therefore, in any case,
\[
\lambda\abs{Q_i}\leq C \lambda^{1-q}\iint_{Q_i^{*}}F^q\chi_{\set{F>\gamma\lambda}}\,dxdt+2\lambda\abs{Q_i\cap (U_\lambda)^c}.
\]
We proceed by using the last estimates as well as the fact that $(Q_i^{**})_i$ covers the set $O_\lambda\cap Q_{a,a}$.
  \begin{align*}
  \iint_{Q_{a,a}\cap O_\lambda} F\,dxdt&\leq \sum_i\iint_{Q_i^{**}}F\,dxdt\leq 2\lambda\sum_i\abs{Q_i^{**}}\leq c\lambda\sum_i\abs{Q_i}\\
  &\leq C \lambda^{1-q}\sum_i\iint_{Q_i^*}F^q\chi_{\set{F>\gamma\lambda}}\,dxdt+2 c \lambda\abs{Q_i\cap(U_\lambda)^c}\\
  &\leq C \lambda^{1-q}\iint_{Q_{b,b}}F^q\chi_{\set{F>\gamma\lambda}}\,dxdt+2 c \lambda\abs{Q_{b,b}\cap \set{M^*(\tilde f^2\chi_{Q_{2,2}})>\tilde\epsilon\lambda}}.
  \end{align*}
We pick $\alpha\in (0,1)$ to be chosen later, $k\in\setN$, multiply the above estimate by $\lambda^{-\alpha}$, and integrate from $\mu_{a,b}$ to $k$ with respect to $\lambda$. This implies
\begin{align*}
 (I)&:= \int_{\mu_{a,b}}^k\lambda^{-\alpha}\iint_{Q_{a,a}\cap O_\lambda} F\,dxdt\,d\lambda
  \leq {C}\int_{{\mu_{a,b}}}^k\lambda^{1-q-\alpha}\iint_{Q_{b,b}}F^q\chi_{\set{F>\gamma\lambda}}\,dxdt\,d\lambda\\
  &\quad+2{c}\int_{{\mu_{a,b}}}^k\lambda^{1-\alpha}\abs{Q_{b,b}\cap\set{M^*({\tilde f^2})>\tilde\epsilon\lambda}}d\lambda
 =:(II)+(III).
  \end{align*}
We estimate from below
  \begin{align*}
(I)&\geq \int_0^k\lambda^{-\alpha}\iint_{Q_{a,a}} F\chi_{\set{F>\lambda}}\,dxdt\,d\lambda-\frac{{\mu_{a,b}}^{2-\alpha}}{1-\alpha}\\
&=\iint_{Q_{a,a}}F\int_0^{\min\set{F(x),k}}\lambda^{-\alpha}d\lambda\,dxdt-\frac{{\mu_{a,b}}^{2-\alpha}}{1-\alpha}\\
&=\frac{1}{1-\alpha}\iint_{Q_{a,a}} F\min\set{F,k}^{1-\alpha}\,dxdt -\frac{{\mu_{a,b}}^{2-\alpha}}{1-\alpha}.
  \end{align*}
The bound from above is analogous
  \begin{align*}
  (II)&\leq {C}\int_{\gamma{{\mu_{a,b}}}}^{\gamma k} \lambda^{1-q-\alpha}\iint_{Q_{b,b}}F^q\chi_{\set{F>\lambda}}\,
  dxdt\,d\lambda\\
  &\leq \frac{{C}}{2-q-\alpha}\iint_{Q_{b,b}} F^q\min\set{F,k}^{2-q-\alpha}\,dxdt.
  \end{align*}
Finally, $(III)$ is estimated by the continuity of the maximal function and the classical integral representation via level sets. We calculate and estimate
  \begin{align*}
  (III)&= \int_{{\mu_{a,b}}}^k\lambda^{1-\alpha}\abs{Q_{b,b}\cap\set{M^*({\tilde f^{2}})>\tilde\epsilon\lambda}}d\lambda
	\leq c\iint_{Q_{b,b}} \abs{M^*({\tilde f^{2}}Q_{2,2})})^{2-\alpha}\,dxdt\\
	&\leq c\iint_{Q_{2,2}} \tilde{f}^{{2(2-\alpha)}}\,dxdt.
  \end{align*}
  All together, using the definition of ${\mu_{a,b}}$ from \eqref{eq:subintr}, we find that
  \begin{align*}
  \frac{1}{1-\alpha}\iint_{Q_{a,a}} F\min\set{F,k}^{1-\alpha}\,dxdt& \leq \frac{c}{2-q-\alpha}\iint_{Q_{b,b}} F^q\min\set{F,k}^{2-q-\alpha}\,dxdt \\
  &\quad+ \frac{c\abs{b-a}^{-\tau(2-\alpha)}}{1-\alpha}
   + c\iint_{Q_{2,2}} \tilde{f}^{{2(2-\alpha)}}\,dxdt.
  \end{align*}
  Now, we fix $\alpha\in (0,1)$ in such a way, that
  \[
  \frac{c(1-\alpha)}{2-q-\alpha}\leq \frac12.
  \]
  This implies
    \begin{align*}
 \iint_{Q_{a,a}} F\min\set{F,k}^{1-\alpha}\,dxdt& \leq \frac{1}{2}\iint_{Q_{b,b}} F\min\set{F,k}^{1-\alpha}\,dxdt \\
  &\quad+ c\abs{b-a}^{-\tau(2-\alpha)}
   + c\iint_{Q_{2,2}} \tilde{f}^{{2(2-\alpha)}}\,dxdt.
  \end{align*}
  Finally, the interpolation Lemma~6.1 of \cite{Giu03} implies that for every $k\in \setN$
  \begin{align*}
 \iint_{Q_{\frac12,\frac12}} F\min\set{F,k}^{1-\alpha}\,dxdt& \leq c
   + c\iint_{Q_{2,2}} \tilde{f}^{{2(2-\alpha)}}\,dxdt.
  \end{align*}
Letting $k\to\infty$ for $p=2-\alpha>1$ yields that
	\begin{align*}
\dashiint_{Q_{\frac12,\frac12}}\abs{D \tilde{u}^m}^{2p}dxdt\leq  c
   + c\,\dashiint_{Q_{2,2}} \tilde{f}^{{2p}}\,dxdt.
\end{align*}
This implies the desired result by scaling back to $u$.
 \end{proof}
	\begin{thm}[parabolic]
\label{thm-para}
Let $u\ge0$ be a local, weak solution to {\eqref{PME}-\eqref{PMS-eq:structure}} in the space-time cylinder $E_T$ for $m \in \Big(\frac{(n-2)_+}{n+2},1\Big)$. There exist an exponent $p>1$ and a constant $c$, that depends only on the data, such that for any parabolic cylinder $Q_{R^2,R}\subset E_T$ with
\[
{\bigg(\dashiint_{Q_{R^2,R}} u^{m+1}\,dxdt\bigg)^{1-m}=K,}
\]
we have
\begin{align}\label{eq:par-final}
\bigg(\dashiint_{\frac12Q_{R^2,R}}\abs{D u^m}^{2p}\,dxdt\bigg)^\frac{1-m}{2mp}\leq c\sqrt{K}\bigg(\dashiint_{Q_{R^2,R}}{R^{2p}}{f^{2p}}\,dxdt\bigg)^\frac{1-m}{2mp}+cK^\frac{3}{2}+c.
\end{align}
\end{thm}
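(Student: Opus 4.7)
The plan is to reduce Theorem~\ref{thm-para} to the intrinsic estimate of Theorem~\ref{thm-intr} by setting up a sub-intrinsic geometry inside $Q_{R^2,R}$. Since the hypothesis implies $\bigl(\dashiint_{Q_{R^2,R}} u^{m+1}\,dxdt\bigr)^{(1-m)/(m+1)} = K^{1/(m+1)}$, the parabolic cylinder is sub-intrinsic (in the sense of Theorem~\ref{thm-intr}) with parameter $\theta_o \sim \max\{1, K^{1/(m+1)}\}$. The admissible sub-intrinsic cylinders $Q_{S,\sqrt{S/\theta_o}} \subset Q_{R^2,R}$ then have $S \sim R^2$ and spatial radius $r = \sqrt{S/\theta_o} \sim R/\sqrt{\theta_o}$. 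The restriction $m > (n-2)_+/(n+2)$ is used here: it keeps the exponent $\gamma := (n+2)m - (n-2)$ positive, which ensures that such sub-intrinsic cylinders can in fact be fit inside $Q_{R^2,R}$ even for large $K$.

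When $K$ is bounded by a data-dependent constant, $\theta_o \sim 1$ and a single application of Theorem~\ref{thm-intr} to (a suitably centered translate of) $Q_{R^2/2,R/2}$ suffices; the resulting $K$-dependence is absorbed into the constant term of the target. For large $K$ the admissible spatial radii shrink, so I would cover $\tfrac12 Q_{R^2,R}$ by a Vitali-type family of sub-intrinsic cylinders $Q_i$ obtained through Lemma~\ref{lem:scal}--Lemma~\ref{lem:scal?} applied to $u^{m+1}$, using Lemma~\ref{Lm:Vitali:1} to guarantee that the enlarged family $\{2Q_i\}$ has bounded overlap inside $Q_{R^2,R}$. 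On each $Q_i$, Theorem~\ref{thm-intr} gives
\[
\bigl(\dashiint_{\tfrac12 Q_i} |Du^m|^{2p}\,dxdt\bigr)^{1/p} \leq c\bigl(\dashiint_{2Q_i} f^{2p}\,dxdt\bigr)^{1/p} + c'\,\theta_{o,i}^{(m+1)/(1-m)}/S_i.
\]
Raising to the $p$-th power, summing, and normalizing by $|\tfrac12 Q_{R^2,R}|$ produces an aggregate bound
\[
\bigl(\dashiint_{\tfrac12 Q_{R^2,R}} |Du^m|^{2p}\,dxdt\bigr)^{1/p} \leq C\bigl(\dashiint_{Q_{R^2,R}} f^{2p}\,dxdt\bigr)^{1/p} + C\,\theta_o^{(m+1)/(1-m)}/R^2,
\]
with $\theta_o^{(m+1)/(1-m)} \sim K^{1/(1-m)}$ when $K\ge 1$.

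Finally, I would raise both sides to the $(1-m)/(2m)$-th power. The factor $R^{(1-m)/m}$ in front of the target's $f$-term arises via the identity $\bigl(\dashiint R^{2p} f^{2p}\,dxdt\bigr)^{(1-m)/(2mp)} = R^{(1-m)/m}\bigl(\dashiint f^{2p}\,dxdt\bigr)^{(1-m)/(2mp)}$, while the error term becomes $\sim K^{1/(2m)}/R^{(1-m)/m}$, to be split into $\sqrt{K}$, $K^{3/2}$, and constant contributions via Young's inequality (pairing the $\sqrt{K}$ against the $f$-term) combined with a case split on whether $K\le 1$ or $K>1$. The main obstacle is this last bookkeeping: one must carefully track the exponents to confirm that the $K$-power produced by the intrinsic estimate (together with the implicit $R$-dependence) can be absorbed into the stated linear combination $c\sqrt{K} + cK^{3/2} + c$ across the entire admissible range $m \in \bigl((n-2)_+/(n+2),1\bigr)$.
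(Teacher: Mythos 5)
The paper's proof is considerably simpler than what you propose and uses a different covering geometry. After scaling to $R=1$, the paper covers $Q_{1,1}$ by at most $\lceil\sqrt{K}\rceil$ cylinders $Q_{s,r}$ with $r=1$ and $s=1/\sqrt{K}$, i.e.\ it slices \emph{only in the time direction}, keeping the full spatial radius. No Vitali covering, and none of the machinery of Lemma~\ref{lem:scal}, Lemma~\ref{lem:scal?} or Lemma~\ref{Lm:Vitali:1}, is invoked at this stage (that machinery is used inside the proof of Theorem~\ref{thm-intr}, not here). The sub-intrinsicity check is elementary: $\big(\iint_{Q_{s,r}} u^{m+1}\big)^{1-m}\le K$ by monotonicity, and the geometric condition $r^{2(m+1)}|B_r|^{m-1}/s^2\ge K$ (which is $r^\gamma/s^2\ge K$ with $\gamma=(n+2)m-(n-2)$, exactly where the restriction on $m$ enters, as you correctly note) then yields sub-intrinsicity, though with a $K$-dependent constant $C\sim K^{2/(m+1)}$ that feeds into the $c'$ of Theorem~\ref{thm-intr}.

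Your proposal instead shrinks the \emph{spatial} radius, $r\sim K^{-1/\gamma}$ with $s\sim 1$, so that each sub-cylinder is sub-intrinsic with a \emph{uniform} constant. This is feasible, but it is genuinely different and, crucially, traded in the wrong direction. With $\theta_o=1/r^2\sim K^{2/\gamma}$ and $S\sim 1$, the intrinsic error term $\theta_o^{(m+1)/(1-m)}/S$ is $\sim K^{2(m+1)/((1-m)\gamma)}$; after raising to $(1-m)/(2m)$ this becomes $K^{(m+1)/(m\gamma)}$, which exceeds the claimed $K^{3/2}$ already for $n=2$ and $m<\tfrac12$ (and blows up as $m$ approaches the admissible lower endpoint). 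The paper's time slicing gives instead $\theta_o=S=1/\sqrt K$, hence $\theta_o^{(m+1)/(1-m)}/S=K^{-m/(1-m)}<1$; the smallness of this factor compensates the growth of $c'(C)$, and tracking both yields $K^{(3-m)/(2(m+1))}\le K^{3/2}$ uniformly in the admissible range of $m$. So the bookkeeping you deferred as ``the main obstacle'' is not mere bookkeeping: carried out, it shows your covering produces a weaker estimate than the statement. You would need to abandon the insistence on a uniform sub-intrinsic constant and instead balance the size of $C$ against $\theta_o$ — which is precisely what the paper's choice $r=1$, $s=1/\sqrt K$ accomplishes.
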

\begin{proof}
Estimate \eqref{eq:par-final} is proved by covering $Q_{R^2,R}$ with proper sub-intrinsic cylinders. As {\eqref{PME}-\eqref{PMS-eq:structure}} is essentially invariant under the classical parabolic scaling, without loss of generality, we may assume $R=1$ .

We define $K$ as the number for which
\[
\bigg(\iint_{Q_{1,1}} u^{m+1}\, dxdt\bigg)^{1-m}=K.
\]
Now let $r\in (0,1]$ and $s\in (0,1]$;  any $Q_{s,r}\subset Q_{1,1}$ satisfies
\[
\bigg(\iint_{Q_{s,r}} u^{m+1} \,dxdt\bigg)^{m-1}\leq K.
\]  
If $\frac{r^{2(m+1)}\abs{B_{r}}^{m-1}}{s^2}\geq K$, then the cylinder is sub-intrinsic, since
\[
\bigg(\iint_{Q_{s,r}}u^{m+1} \,dxdt\bigg)^{m-1}\leq \frac{r^{2(m+1)}\abs{B_{r}}^{m-1}}{s^2}.
\]  
If $K\leq 1$, we can pick $s,r=1$ and the result follows by Theorem~\ref{thm-intr}. If $K>1$, we choose $r=1$ and $s=\frac{1}{\sqrt{K}}$. We can then cover $Q_{1,1}$ by  $N$ sub-cylinders of the above type, where 
$$\left\lfloor\sqrt{K}\right\rfloor\leq  N \leq \left\lceil\sqrt{K}\right\rceil,$$
and $\lfloor\cdot\rfloor$, $\lceil\cdot\rceil$ are the floor and ceiling functions, respectively.
This concludes the proof by Theorem~\ref{thm-intr}.
\end{proof}

\providecommand{\bysame}{\leavevmode\hbox to3em{\hrulefill}\thinspace}
\providecommand{\MR}{\relax\ifhmode\unskip\space\fi MR }
\providecommand{\MRhref}[2]{%
  \href{http://www.ams.org/mathscinet-getitem?mr=#1}{#2}
}
\providecommand{\href}[2]{#2}

\end{document}